\numberwithin{equation}{section}
\newtheorem{thm}{Theorem}[section]
\newtheorem{lemma}[thm]{Lemma}
\newtheorem{remark}[thm]{Remark}
\newtheorem{definition}[thm]{Definition}
\begin{document}

\title[Convergence Rates of ESD] {Convergence Rates of Spectral Distribution of Large Dimensional Quaternion Sample Covariance Matrix }

\author{HUIQIN LI,\ \ ZHIDONG BAI}
\thanks{ H. Q. Li was partially supported by a grant CNSF 11301063; Z. D. Bai was partially supported by CNSF  11171057, the Fundamental Research Funds for the Central Universities, PCSIRT, and the NUS Grant R-155-000-141-112.}

\address{KLASMOE and School of Mathematics \& Statistics, Northeast Normal University, Changchun, P.R.C., 130024.}
\email{lihq118@nenu.edu.cn}
\address{KLASMOE and School of Mathematics \& Statistics, Northeast Normal University, Changchun, P.R.C., 130024.}
\email{baizd@nenu.edu.cn}

\subjclass{Primary 15B52, 60F15, 62E20;
Secondary 60F17}

\maketitle

\begin{abstract}
In this paper, we study the convergence rates of empirical spectral distribution of large dimensional quaternion sample covariance matrix. Assume that the entries of $\mathbf X_n$ ($p\times n$) are independent quaternion random variables with mean zero, variance 1 and uniformly bounded sixth moments. Denote $\mathbf S_n=\frac{1}{n}\mathbf X_n\mathbf X_n^*$. Using Bai inequality, we prove that the expected empirical spectral distribution (ESD) converges to the limiting Mar${\rm \check{c}}$enko-Pastur distribution with the ratio of the dimension to sample size $y_p=p/n$ at a rate of $O\left(n^{-1/2}a_n^{-3/4}\right)$ when $a_n>n^{-2/5}$ or $O\left(n^{-1/5}\right)$ when $a_n\le n^{-2/5}$, where $a_n=(1-\sqrt{y_p})^2$ is the lower bound for the M-P law. Moreover, the rates for both the convergence in probability and the almost sure convergence are also established. The weak convergence rate of the ESD is $O\left(n^{-2/5}a_n^{-2/5}\right)$ when $a_n>n^{-2/5}$ or $O\left(n^{-1/5}\right)$ when $a_n\le n^{-2/5}$. The strong convergence rate of the ESD is $O\left(n^{-2/5+\eta}a_n^{-2/5}\right)$ when $a_n>n^{-2/5}$ or $O\left(n^{-1/5}\right)$ when $a_n\le n^{-2/5}$ for any $\eta>0$.

{\bf Keywords}: Empirical Spectral Distribution;  Mar${\rm \check{c}}$enko-Pastur Law; Weak Convergence Rate; Strong Convergence Rate;  Quaternion Sample Covariance Matrix.

\end{abstract}

\section{Introduction}

Let $A$ be a $p \times p$ Hermitian matrix and denote its eigenvalues  by ${s_j}, j = 1,2, \cdots, p$. The empirical spectral distribution (ESD) of $A$ is defined by
$${F^A}\left(x\right) =\frac{1}{p}\sum\limits_{j = 1}^p {I\left({s _j} \le x\right)},$$ where ${I\left(D\right)}$ is the indicator function of an event ${D}$. Huge data sets with large dimension and large sample size lead to failure of the applications of the classical limit theorems. In recent decades, the theory of random matrices (RMT) has been actively developed which enables us to find the solutions to this issue. The sample covariance matrix is one of the most important  random matrices in RMT, which can be traced back to Wishart (1928) \cite{wishart1928generalised}. In \cite{marchenko1967distribution},  Mar${\rm \breve{c}}$enko and Pastur  proved that ESD of  large  dimensional complex sample covariance matrices tends to the M-P law $F_y\left(x\right)$ with the density function
\begin{align*}
f_y\left(x\right)=  {\begin{cases}
{\frac{1}{{2\pi xy{\sigma ^2}}}\sqrt {\left(b - x\right)\left(x - a\right)} ,}& a \le x \le b,\\
{0,} & otherwise,
\end{cases}}
\end{align*}
where $a = {\sigma ^2}{\left(1 - \sqrt y \right)^2}$, $b = {\sigma ^2}{\left(1 + \sqrt y \right)^2}$, ${\sigma ^2}$  is the scale parameter, and the constant $y$ is the limiting ratio of  dimension $p$ to sample size $n$.
If $y > 1$, $F_y\left(x\right)$ has a point mass $1 - 1/y$ at the origin. After the limiting spectral distribution (LSD) of the sample covariance matrices is found,   two important problems arise. The first is the bound on extreme eigenvalues; the second is the convergence rate of the ESD with respect to sample size. Yin, Bai and Krishnaiah (1988) \cite{Yin1988} proved that the largest eigenvalue of the large dimensional real sample covariance matrix tends to $\sigma^2\left(1+\sqrt y\right),\ a.s.$. Bai and Yin (1993) \cite{BaiYin1993} established the conclusion that the smallest eigenvalue of the large dimensional real sample covariance matrix strongly converges to $\sigma^2\left(1-\sqrt y\right)$. For convergence rate, since Bai \cite{Bai1993a} established a Berry-Essen type inequality, much work has been done (see \cite{Bai1993b,Bai201268,Bai199795,Bai2003,gotze2004rate,gotze2010rate}, among others). Here the readers are referred to three books \cite{anderson2010introduction,bai2010spectral,mehta2004random} for more details.

As the wide applications of quaternions and quaternion matrices  in quantum physics, robot technology and artificial satellite attitude control, etc., it is necessary to study the quaternion sample covariance matrix. In \cite{li2013convergence}, it was proved that the ESD  of  large  dimensional quaternion sample covariance matrix tends to the M-P law. From \cite{li2013extreme}, we have known the limits of extreme eigenvalues of quaternion sample covariance matrix. Convergence rates of the ESD of the quaternion sample covariance matrix are considered in this paper.

In what follows, we introduce some notations about quaternions. The quaternion base can be represented by four $2\times 2$ matrices as
\begin{align*}
\mathbf {e} = \left( \begin{array}{cc}
1&0\\
0&1\\
\end{array} \right),
\mathbf i = \left( \begin{array}{cc}
i&0\\
0&- i\\
\end{array} \right),
\mathbf j = \left( \begin{array}{cc}
0&1\\
-1&0\\
\end{array}\right),
\mathbf k = \left( \begin{array}{cc}
0&i\\
i&0\\
\end{array}\right),\end{align*}
where $i=\sqrt{-1}$ denotes the imaginary unit. Thus, a quaternion can be written by a $2\times 2$ complex matrix as
\begin{align*}
x = a \cdot \mathbf e + b \cdot \mathbf i + c \cdot \mathbf j + d \cdot \mathbf k =\left( {\begin{array}{*{20}{c}}
a+bi &c+di\\
{ - c+di }&{a-bi }
\end{array}} \right)
\triangleq\left( {\begin{array}{*{20}{c}}
\lambda &\omega\\
-\overline{\omega }&\overline{\lambda}
\end{array}} \right)
\end{align*}
where the coefficients  $a,b,c,d$ are real. The conjugate of $x$ is defined as
$$\bar x = a \cdot \mathbf e - b \cdot \mathbf i - c \cdot \mathbf j - d \cdot \mathbf k=\left( {\begin{array}{*{20}{c}}
a-bi &-c-di \\
{ c-di }&{a+bi }
\end{array}} \right)
=\left( {\begin{array}{*{20}{c}}
\overline{\lambda} &-\omega\\
\overline{\omega }&\lambda
\end{array}} \right) $$
and its norm as
 $$\left\| x \right\| = \sqrt {{a^2} + {b^2} + {c^2} + {d^2}}=\sqrt{\left|\lambda\right|^2+\left|\omega\right|^2}.$$
More details can be found in \cite{adler1995quaternionic,finkelstein1962foundations,zhang1995numerical,kuipers1999quaternions,mehta2004random,zhang1997quaternions,zhang1994numerical}. It is worth mentioning that any \ $n\times n$ quaternion matrix \ $\mathbf Y$ can be represented as a \ $2n\times 2n$ complex matrix \ $\psi(\mathbf Y)$. Consequently, we can deal with quaternion matrices as complex matrices.

The following two tools play a key role in establishing the convergence
rates of the  ESD. The first is Bai inequality:
\begin{lemma}{(Bai inequality in \cite{Bai1993a})}\label{bai1993}
Let $F$ be a distribution function and $G$ be a function of bounded variation satisfying $\int |F(x)-G(x)|\mathrm{d}x< \infty.$ Denote their Stieltjes transforms by $f(z)$ and $g(z)$, respectively, where $z=u+iv \in \mathbb{C^+}$. Then we have
\begin{equation}\label{eq:8}
\begin{split}
  \left\|F-G\right\|&\overset{\text{def}}=\sup \limits_{x} \left|F(x)-G(x)\right| \\
  &\leq \frac{1}{\pi\left(1-\kappa\right)\left(2\gamma-1\right)} {\bigg [} \int_{-A}^{A}\left|f\left(z\right)-g\left(z\right)\right|\mathrm{d}u \\
  &+2\pi v^{-1} \int_{\left|x\right|>B}\left|F\left(x\right)-G\left(x\right)\right|\mathrm{d}x \\
  &+v^{-1} \sup \limits_{x} \int_{\left|s\right|\leq 2va}\left|G\left(x+s\right)-G\left(x\right)\right|\mathrm{d}s \bigg],
\end{split}
\end{equation}
where $a$, $\gamma$, $A$ and $B$ are positive constants such that $A>B$, $$\gamma=\frac{1}{\pi}\int_{|u|<a}\frac{1}{u^2+1}\mathrm{d}u>\frac{1}{2}, \ {\rm and} \  \kappa=\frac{4B}{\pi(A-B)(2\gamma-1)}<1.$$
\end{lemma}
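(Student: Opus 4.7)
The plan is to adapt the classical Esseen smoothing inequality, originally designed for characteristic functions, to the setting of Stieltjes transforms. The key observation is that $\pi^{-1}\operatorname{Im} f(u+iv)$ is exactly the Poisson-kernel convolution of $dF$ at window width $v$, so that knowledge of $f - g$ along a horizontal line $\operatorname{Im} z = v$ is tantamount to knowledge of the smoothed difference $F - G$. The proof then reduces the pointwise bound on $\|F - G\|_{\infty}$ to three controlled quantities: (i) the $L^{1}$-norm of $f - g$ along a bounded strip, (ii) the tail behaviour of $F - G$, and (iii) the local modulus of continuity of $G$ on intervals of length $2va$.

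Concretely, I would fix an $x_{0}$ (ultimately taking the supremum over such $x_{0}$) and set $\eta := F(x_{0}) - G(x_{0})$; by symmetry it suffices to treat $\eta \geq 0$. Starting from the identity
\begin{equation*}
    \pi^{-1}\operatorname{Im}\bigl(f(u+iv) - g(u+iv)\bigr) \;=\; \int \frac{v\,d(F-G)(x)}{\pi\bigl((x-u)^{2}+v^{2}\bigr)},
\end{equation*}
I would integrate both sides in $u$ over a carefully chosen window around $x_{0}$ of length proportional to $va$ and apply Fubini. The inner $u$-integral collapses to an arctangent kernel against $d(F-G)(x)$, and the monotonicity of $F$ together with the hypothesis $\gamma = \pi^{-1}\int_{|u|<a}(u^{2}+1)^{-1}du > \tfrac{1}{2}$ guarantees that this kernel picks up $\eta$ with coefficient at least $2\gamma - 1 > 0$. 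The residual piece of the kernel is non-negative and may in turn be bounded using $\eta = \sup_{x}|F-G|$ itself.

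Having isolated a term of the form $(2\gamma - 1)\,\eta$ on one side, I would then estimate the $u$-integral of $|\operatorname{Im}(f-g)|$ by splitting into three regions. On $|u| \leq A$, the trivial bound $|\operatorname{Im}(f-g)| \leq |f-g|$ produces the first term in \eqref{eq:8}. On $|u| > A$, a second application of Fubini combined with the constraint $A > B$ in the denominator $(x-u)^{2}+v^{2}$ converts the tail into $v^{-1}\int_{|x|>B}|F-G|dx$ with the coefficient $\kappa$ in front; this explains precisely why the hypothesis $\kappa = 4B/[\pi(A-B)(2\gamma-1)] < 1$ is needed, so that the residual $\eta$-contribution may be absorbed on the left. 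Finally, the mismatch between the Poisson-smoothed difference at scale $v$ and the pointwise value $F(x_{0}) - G(x_{0})$ is handled by the modulus-of-continuity term of $G$ on scale $2va$, giving the third term.

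The main obstacle is the constant-chasing: one must carefully track the arctangent factor $2\gamma - 1$ coming from the truncation parameter $a$, and the tail factor $1 - \kappa$ coming from the absorption step, and verify that they combine to yield exactly the prefactor $1/[\pi(1-\kappa)(2\gamma-1)]$ in \eqref{eq:8}. The analytic input is otherwise elementary---Fubini's theorem, the Poisson-kernel representation of $\operatorname{Im}(f-g)$, and the integrability assumption $\int|F-G|dx < \infty$, which makes every tail integral finite and justifies all interchanges of integration.
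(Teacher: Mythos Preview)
The paper does not prove this lemma; it is merely quoted from \cite{Bai1993a} as a tool, with no argument given. Consequently there is no ``paper's own proof'' to compare against. Your sketch is, in fact, a faithful outline of Bai's original argument in \cite{Bai1993a}: the Poisson-kernel identity for $\operatorname{Im}(f-g)$, integration of $u$ over a window of width $\sim va$ around a maximizing point, extraction of the $(2\gamma-1)\eta$ term via the arctangent kernel and the monotonicity of $F$, the tail estimate on $|u|>A$ producing the $\kappa\eta$ term to be absorbed, and the modulus-of-continuity correction for $G$. The constant-tracking you flag as the main obstacle is genuine but routine once the decomposition is fixed, and the integrability hypothesis $\int|F-G|\,dx<\infty$ is indeed what legitimizes all the Fubini steps. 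So your plan is correct and matches the source; just be aware that for the purposes of \emph{this} paper the inequality is a black box.
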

The other is the form of the inverse of some matrices related to quaternions:
\begin{lemma}[see \cite{li2013convergence} or \cite{yin2013semicircular}]\label{yin}
For all $n\geq1$, if a complex  matrix  $\mathbf \Omega_n$ is   invertible and of Type-\uppercase\expandafter{\romannumeral3}, then $\mathbf \Omega_n^{-1}$ is a Type-\uppercase\expandafter{\romannumeral1} matrix.
\end{lemma}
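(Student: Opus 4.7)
The plan is to work directly from the structural characterizations of Type-I and Type-III matrices. Recall that a $2n\times 2n$ complex matrix is Type-I precisely when it equals $\psi(\mathbf Y)$ for some $n\times n$ quaternion matrix $\mathbf Y$; equivalently, writing $\mathbf J_n=\mathbf I_n\otimes \mathbf j$, Type-I matrices $\mathbf M$ are those satisfying an intertwining relation of the form $\mathbf J_n\overline{\mathbf M}\mathbf J_n^{-1}=\mathbf M$. Type-III matrices are characterized by a closely related but distinct intertwining relation (differing by a transpose or by a sign convention on the conjugation). The first step is to write both characterizations out explicitly in block form, so that the lemma reduces to checking an identity between two such intertwining relations after inverting.

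With the characterizations in hand, the proof is a short algebraic manipulation. Starting from the Type-III identity for $\mathbf \Omega_n$, I would take inverses on both sides, using $\mathbf J_n^{-1}=-\mathbf J_n$ (up to sign), the identity $(\overline{\mathbf M})^{-1}=\overline{\mathbf M^{-1}}$, and the reversal rule $(\mathbf A\mathbf B\mathbf C)^{-1}=\mathbf C^{-1}\mathbf B^{-1}\mathbf A^{-1}$. After rearranging, the resulting relation for $\mathbf \Omega_n^{-1}$ should be exactly the Type-I intertwining relation, which is what we need. Alternatively, one can carry out the verification directly on the $2\times 2$ block entries: partition $\mathbf \Omega_n^{-1}$ into $2\times 2$ blocks and use the formula for the inverse together with the Type-III block pattern to check that each $2\times 2$ block of $\mathbf \Omega_n^{-1}$ has the quaternion form $\left(\begin{smallmatrix}\lambda&\omega\\-\overline{\omega}&\overline{\lambda}\end{smallmatrix}\right)$.

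The main potential obstacle is purely bookkeeping: keeping track of where complex conjugation, transposition, and the sign coming from $\mathbf j^2=-\mathbf e$ appear, and making sure the relation that pops out after inversion is the Type-I relation and not some other related block symmetry. There is no analytic content; once the two intertwining identities are correctly stated, the lemma follows in one line. Consequently the proposal is to (i) record the two intertwining relations, (ii) apply the inversion/conjugation rules above to the Type-III relation for $\mathbf \Omega_n$, and (iii) read off the Type-I relation for $\mathbf \Omega_n^{-1}$, citing the corresponding calculation in \cite{li2013convergence} or \cite{yin2013semicircular} for the detailed block verification.
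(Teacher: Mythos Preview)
The paper does not prove this lemma itself; it is quoted from \cite{li2013convergence} and \cite{yin2013semicircular}. Your overall strategy of characterizing the two types by intertwining relations involving $\mathbf J_n=\mathbf I_n\otimes\mathbf j$ and then inverting is the right one, but your stated characterization of Type-I is wrong, and this makes the proposed block verification fail. Type-I matrices are \emph{not} the images $\psi(\mathbf Y)$ of quaternion matrices: in the paper's definition of Type-I the off-diagonal blocks are arbitrary complex $2\times 2$ matrices $\left(\begin{smallmatrix}a&b\\c&d\end{smallmatrix}\right)$, with the $(l,k)$ block equal to the adjugate $\left(\begin{smallmatrix}d&-b\\-c&a\end{smallmatrix}\right)$ of the $(k,l)$ block. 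The relation $\mathbf J_n\overline{\mathbf M}\mathbf J_n^{-1}=\mathbf M$ you wrote down instead characterizes matrices whose every $2\times2$ block is a quaternion, a different class. Concretely, $\mathbf\Omega=i\mathbf I_{2n}$ is Type-III and $\mathbf\Omega^{-1}=-i\mathbf I_{2n}$ is Type-I, but its diagonal blocks $-i\mathbf I_2$ are not quaternions (no $\lambda$ satisfies $\lambda=\bar\lambda=-i$), so your proposed check that ``each $2\times 2$ block of $\mathbf\Omega_n^{-1}$ has the quaternion form'' would wrongly reject this example.

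The fix is small. A direct entry computation shows that Type-I is characterized by the \emph{transpose} intertwining relation
\[
\mathbf J_n\,\mathbf M^{T}\,\mathbf J_n^{-1}=\mathbf M.
\]
One then verifies from the definition that every Type-III matrix already satisfies this same relation (so Type-III $\subset$ Type-I), and inverting both sides immediately gives $\mathbf J_n(\mathbf\Omega_n^{-1})^{T}\mathbf J_n^{-1}=\mathbf\Omega_n^{-1}$, which is precisely the Type-I condition for $\mathbf\Omega_n^{-1}$. With this correction your one-line argument goes through.
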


\noindent
In Lemma \ref{yin}, the Type-\uppercase\expandafter{\romannumeral3} and Type-\uppercase\expandafter{\romannumeral1} are defined as follows:
\begin{definition} A matrix is called Type-\uppercase\expandafter{\romannumeral1} matrix if it has the following structure:
\[\left( {\begin{array}{*{20}{c}}
{{t_1}}&0&{{a_{12}}}&{{b_{12}}}& \cdots &{{a_{1n}}}&{{b_{1n}}}\\
0&{{t_1}}&{{c_{12}}}&{{d_{12}}}& \cdots &{{c_{1n}}}&{{d_{1n}}}\\
{{d_{12}}}&{ - {b_{12}}}&{{t_2}}&0& \cdots &{{a_{2n}}}&{{b_{2n}}}\\
{ - {c_{12}}}&{{a_{12}}}&0&{{t_2}}& \cdots &{{c_{2n}}}&{{d_{2n}}}\\
 \vdots & \vdots & \vdots & \vdots & \ddots & \vdots & \vdots \\
{{d_{1n}}}&{ - {b_{1n}}}&{{d_{2n}}}&{ - {b_{2n}}}& \cdots &{{t_n}}&0\\
{ - {c_{1n}}}&{{a_{1n}}}&{ - {c_{2n}}}&{{a_{2n}}}& \ldots &0&{{t_n}}
\end{array}} \right).\]
Here all the  entries are  complex.
\end{definition}
\begin{definition}A matrix is called Type-\uppercase\expandafter{\romannumeral3} matrix if it has the following structure:
\[\left( {\begin{array}{*{20}{c}}
{{t_1}}&0&{{a_{12}} }&{{b_{12}} }& \cdots &{{a_{1n}}}&{{b_{1n}} }\\
0&{{t_1}}&{ - {{\bar b}_{12}} }&{{{\bar a}_{12}} }& \cdots &{ - {{\bar b}_{1n}} }&{{{\bar a}_{1n}}}\\
{{{\bar a}_{12}} }&{ - {b_{12}} }&{{t_2}}&0& \cdots &{{a_{2n}} }&{{b_{2n}} }\\
{{{\bar b}_{12}} }&{{a_{12}}}&0&{{t_2}}& \cdots &{ - {{\bar b}_{2n}} }&{{{\bar a}_{2n}} }\\
 \vdots & \vdots & \vdots & \vdots & \ddots & \vdots & \vdots \\
{{{\bar a}_{1n}}}&{ - {b_{1n}} }&{{{\bar a}_{2n}}}&{ - {b_{2n}} }& \cdots &{{t_n}}&0\\
{{{\bar b}_{1n}} }&{{a_{1n}} }&{{{\bar b}_{2n}} }&{{a_{2n}} }& \ldots &0&{{t_n}}
\end{array}} \right).\]
Here all the variables are complex numbers.
\end{definition}

\section{Main theorem}

In this section, we establish the main theorems about convergence rates of the ESD of the quaternion sample covariance matrix. They can be stated as follows.
\begin{thm}\label{th:1}
Suppose that $\mathbf X_n = ({x_{jk}^{\left(n\right)}})_{p\times n}$ is a quaternion random matrix whose entries are independent. Furthermore, assume that $${\rm E}x_{jk}^{\left(n\right)}=0, {\rm E}\left\|x_{jk}^{\left(n\right)}\right\|^2 =1,\sup_n\sup_{jk}{\rm E} \left\|x_{jk}^{\left(n\right)}\right\|^6\le M.$$
Then, denoting
the ESD of $\mathbf S_n=\frac{1}{n}{\mathbf X_n}{\mathbf X_n^*}$ as \ $F^{\mathbf S_n}$,
we have
\begin{align}\label{al:1}
\left\|{\rm E}F^{\mathbf S_n}-F_{y_p}\right\|=\begin{cases}
O\left(n^{-1/2}a_n^{-3/4}\right),&if \ a_n>n^{-2/5},\\
O\left(n^{-1/5}\right),&otherwise,\end{cases}
\end{align}
where $y_p=p/n$ and $a_n=\left(1-\sqrt{y_p}\right)^2$.
\end{thm}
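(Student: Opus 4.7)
The plan is to apply the Bai inequality (Lemma \ref{bai1993}) with $F={\rm E}F^{\mathbf{S}_n}$ and $G=F_{y_p}$, and to optimize the parameters $v$, $A$, $B$ in terms of $a_n$. Accordingly the proof reduces to three estimates: a uniform bound on $|{\rm E}s_n(z)-s(z)|$ on a horizontal line $\{z=u+iv:|u|\le A\}$, where $s_n$, $s$ denote the Stieltjes transforms of $F^{\mathbf{S}_n}$ and $F_{y_p}$; a tail bound for ${\rm E}F^{\mathbf{S}_n}(x)-F_{y_p}(x)$ for $|x|>B$; and an oscillation bound for $F_{y_p}$ in a small window. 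I would fix $B$ slightly above $(1+\sqrt{y_p})^2$ and $A$ slightly larger than $B$, and take $v=v_n\asymp n^{-1/2}a_n^{-1/4}$ when $a_n>n^{-2/5}$ and $v=v_n\asymp n^{-1/5}$ otherwise, so that the three contributions balance out to the rates announced in \eqref{al:1}.

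The main work is the Stieltjes transform estimate. First I would truncate and centralize the entries at level $n^{1/6}$, using $\sup{\rm E}\|x_{jk}\|^6\le M$ to show the perturbation in $s_n(z)$ is negligible on the scale of $v_n$. For the truncated matrix I would then derive the approximate self-consistent equation satisfied by ${\rm E}s_n(z)$ by expanding the diagonal blocks of $({\mathbf S}_n-z\mathbf I)^{-1}$. Here Lemma \ref{yin} is crucial: $\mathbf S_n-z\mathbf I$ is of Type~III, so its inverse is Type~I, which means the $2\times 2$ diagonal blocks of the resolvent are scalar multiples of $\mathbf I_2$. This lets me apply the standard block Sherman--Morrison expansion (writing the diagonal block associated to the $k$-th quaternion row as $(-z-y_p z{\rm E}s_n(z)+\varepsilon_k(z))^{-1}$), where $\varepsilon_k(z)$ is a martingale-type error term. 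Taking expectations and bounding $\varepsilon_k(z)$ via Burkholder's inequality, moment conditions, and the identity $|s_n(z)|\le 1/v$, one obtains an inequality of the form $|{\rm E}s_n(z)-s(z)|\,\bigl|\text{(derivative factor)}\bigr|=O(n^{-1}v^{-\alpha})$. Inverting this near the edge, where the derivative factor is of order $\sqrt{|z-a|+v}\asymp a_n^{1/2}+v^{1/2}$, gives $|{\rm E}s_n(z)-s(z)|=O\!\bigl(n^{-1}v^{-1}(a_n^{1/2}+v^{1/2})^{-1}\bigr)$; integrating in $u$ on $[-A,A]$ then produces the contribution $O(n^{-1}v^{-1}a_n^{-1/2})$ when $v\ll a_n$ and $O(n^{-1}v^{-3/2})$ when $v\gtrsim a_n$.

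The remaining two terms in Bai's inequality are easier. For the oscillation term I would use that the density $f_{y_p}$ satisfies $f_{y_p}(x)\le C a_n^{-1/4}(x-a_n)^{-1/2}$ on its support to obtain $\sup_x\int_{|s|\le 2va}|F_{y_p}(x+s)-F_{y_p}(x)|\,{\rm d}s=O(v^{3/2}a_n^{-1/4})$, so the third summand in \eqref{eq:8} is $O(v^{1/2}a_n^{-1/4})$. For the tail $\int_{|x|>B}|{\rm E}F^{\mathbf{S}_n}-F_{y_p}|\,{\rm d}x$ I would invoke the truncated-matrix bound on the operator norm together with the sixth moment hypothesis to show this tail is smaller than the other terms; in the regime $a_n\le n^{-2/5}$ one further uses the trivial bound $\|{\rm E}F^{\mathbf{S}_n}-F_{y_p}\|\le1$ to replace $a_n$ by $n^{-2/5}$. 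Substituting and minimizing over $v$ yields \eqref{al:1}.

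The hard part, and the place where the quaternion setting genuinely enters, is the self-consistent estimate for ${\rm E}s_n(z)$ uniformly down to the edge of the spectrum, because the derivative of the M-P equation vanishes like $\sqrt{a_n}$ and controlling this degeneracy with only a sixth moment forces the delicate choice $v_n\asymp n^{-1/2}a_n^{-1/4}$. All the quaternion-specific algebra is absorbed into the Type~I structure of the resolvent via Lemma \ref{yin}, after which the remaining Burkholder-type martingale bounds parallel those of \cite{Bai1993a,Bai1993b} but must be recarried for $2\times 2$ blocks, keeping track of the factor $y_p$ coming from the $2p\times 2n$ representation.
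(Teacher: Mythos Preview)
Your overall strategy (Bai inequality, self-consistent equation via the block resolvent identity, Type-I structure from Lemma~\ref{yin}) matches the paper's, but the Stieltjes-transform step has a genuine gap. You present the outcome as a clean inequality $|{\rm E}s_n(z)-s(z)|\cdot|\text{derivative factor}|=O(n^{-1}v^{-\alpha})$; in fact the error $\delta_n$ in the self-consistent equation is bounded only by $Cn^{-1}v^{-3}(\Delta+v/v_{y_p})^2$ (see (\ref{al7})), where $\Delta=\|{\rm E}F^{\mathbf S_n}-F_{y_p}\|$ is the very quantity you are trying to bound, and even this estimate requires the a~priori control $|b_n|\le 2/\sqrt{y_p|z|}$, which by Lemma~\ref{leb} holds only once $|\delta_n|$ is already small. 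The paper closes this circularity by a continuity/bootstrap argument: it introduces $\mathscr F=\{vv_{y_p}^{1/2}>M_0n^{-1/2}:\ |\delta_n|\le v/[10(A+1)^2 v_{y_p}]\}$, shows $\mathscr F\ne\varnothing$ from a crude bound at large $v$, and proves by contradiction that $\inf\mathscr F=M_0n^{-1/2}$; Lemma~\ref{8.21} then delivers $\Delta\le Cv_1/v_{y_p}(v_1)$ directly, without any pointwise inversion. Your outline omits this bootstrap entirely, and the ``derivative factor $\asymp a_n^{1/2}+v^{1/2}$'' you invoke is not what governs the pointwise difference: the discriminant $\sqrt{(z-a_n)(z-b_n)}$ depends on $u$ and can be as small as $\sqrt v$.

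The oscillation estimate is also wrong. The M-P density does not satisfy $f_{y_p}(x)\le Ca_n^{-1/4}(x-a_n)^{-1/2}$; it \emph{vanishes} like $\sqrt{x-a_n}$ at the lower edge (the factor $1/x$ in $f_{y_p}$ makes it large when $a_n$ is small, but only up to $C/\sqrt{a_n}$). The correct bound is Lemma~\ref{ex1}: $\sup_x\int_{|s|<v}|F_{y_p}(x+s)-F_{y_p}(x)|\,ds\le Cv^2/v_{y_p}$ with $v_{y_p}=\sqrt{a_n}+\sqrt v$, so the third term in Bai's inequality is $Cv/v_{y_p}$, not $Cv^{1/2}a_n^{-1/4}$. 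With the paper's choice $v_1v_{y_p}^{1/2}(v_1)\asymp n^{-1/2}$ this yields exactly $n^{-1/2}a_n^{-3/4}$ in the case $a_n>v_1$; your claimed bounds, with your choice $v\asymp n^{-1/2}a_n^{-1/4}$, do not balance to that rate. (Minor point: the paper truncates at $n^{1/4}$, not $n^{1/6}$.)
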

\begin{remark}
For brevity, we shall drop the superscript $(n)$ from the variables and denote $\left\|{\rm E}F^{\mathbf S_n}-F_{y_p}\right\|$ by $\Delta$.
\end{remark}
\begin{remark}
Note that
\begin{align*}
\left\|{\rm E}F^{\mathbf S_n}-F_{y}\right\|\ge \left\|F_y-F_{y_p}\right\|-\left\|{\rm E}F^{\mathbf S_n}-F_{y_p}\right\|.
\end{align*}
Consequently, the convergence rate of $\left\|{\rm E}F^{\mathbf S_n}-F_{y}\right\|$ relies on that of $\left|y_p-y\right|$. Therefore, it is impossible to establish  the convergence rate of $\left\|{\rm E}F^{\mathbf S_n}-F_{y}\right\|$, unless we know the rate of $\left|y_p-y\right|$. Thus, we  have to consider the convergence rate  of $\left\|{\rm E}F^{\mathbf S_n}-F_{y_p}\right\|$.
\end{remark}
\begin{remark}
To prove Theorem \ref{th:1}, it suffices to show that (\ref{al:1}) is true when $y_p\le 1$.

In fact, for $y_p>1$, write $\mathbf W_p=\frac{1}{p}\mathbf{X_n^*X_n}$ and denote by $G_n\left(x\right)$ the ESD of $\mathbf W_p$. It is known that $\mathbf{X_n^*X_n}$ and $\mathbf{X_nX_n^*}$ have the same nonzero eigenvalues. By calculation, one gets
\begin{align*}
F^{\mathbf S_n}\left(x\right)=y_p^{-1}G_n\left(y_p^{-1}x\right)+\left(1-y_p^{-1}\right)I\left(x\ge 0\right)
\end{align*}
which implies that
\begin{align*}
\left\|F^{\mathbf S_n}-F_{y_p}\right\|=y_p^{-1}\left\|G_n-F_{1/y_p}\right\|.
\end{align*}
Therefore, the convergence rate for $y_p>1$ can turn into that for $1/y_p<1$.
\end{remark}
\begin{thm}\label{th:2}
Under the assumptions in Theorem \ref{th:1}, we have
\begin{align*}
\left\|F_p-F_{y_p}\right\|=\begin{cases}
O_p\left(n^{-1/5}\right),& if \ a_n<n^{-2/5},\\
O_p\left(n^{-2/5}a_n^{-2/5}\right),& if \ n^{-2/5}\le a_n<1.\end{cases}
\end{align*}
\end{thm}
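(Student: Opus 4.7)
The plan is to decompose
\begin{align*}
\left\|F^{\mathbf S_n}-F_{y_p}\right\| \le \left\|F^{\mathbf S_n}-\mathrm{E}F^{\mathbf S_n}\right\| + \left\|\mathrm{E}F^{\mathbf S_n}-F_{y_p}\right\|,
\end{align*}
and dispatch the bias term on the right by Theorem~\ref{th:1}. The bias is $O(n^{-1/2}a_n^{-3/4})$ (resp.\ $O(n^{-1/5})$) and can be absorbed into the claimed probabilistic rate. The task therefore reduces to showing that $\|F^{\mathbf S_n}-\mathrm{E}F^{\mathbf S_n}\| = O_p(n^{-2/5}a_n^{-2/5})$ in the regime $a_n \ge n^{-2/5}$, and $O_p(n^{-1/5})$ in the critical regime $a_n < n^{-2/5}$.

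To bound the fluctuation I would apply Lemma~\ref{bai1993} with $F = F^{\mathbf S_n}$ and $G = \mathrm{E}F^{\mathbf S_n}$. This produces three contributions: (i) the Stieltjes-integral $\int_{-A}^{A}|s_n(z)-\mathrm{E}s_n(z)|\,du$, (ii) the tail $v^{-1}\int_{|x|>B}|F^{\mathbf S_n}-\mathrm{E}F^{\mathbf S_n}|\,dx$, and (iii) the continuity modulus of $\mathrm{E}F^{\mathbf S_n}$. For (ii) I invoke the extreme-eigenvalue theorem for quaternion sample covariance matrices in~\cite{li2013extreme}: on the event that $\lambda_{\max}(\mathbf S_n) \le B$ for $B$ slightly larger than $(1+\sqrt{y_p})^2$, which holds with probability $1-o(1)$, the tail vanishes; the complementary event is absorbed via the crude bound $\|\cdot\|\le 1$ and a union bound. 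For (iii) I compare $\mathrm{E}F^{\mathbf S_n}$ to $F_{y_p}$ by Theorem~\ref{th:1}, and then use the fact that the M--P density is uniformly bounded by $C/a_n$, so this contribution is $O(v/a_n)$.

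The core technical step is to bound (i). Using the complex realization $\psi(\mathbf X_n)$ and the martingale decomposition
\begin{align*}
s_n(z) - \mathrm{E}s_n(z) = \sum_{k=1}^n(\mathrm{E}_k - \mathrm{E}_{k-1})s_n(z),
\end{align*}
where $\mathrm{E}_k$ denotes conditional expectation given the first $k$ quaternion columns of $\mathbf X_n$, I would bound each martingale difference using a quaternionic rank-one (complex rank-two) perturbation identity for the resolvent $(\mathbf S_n - zI)^{-1}$, obtaining differences of order $O(1/(nv))$. Burkholder's inequality then yields $\mathrm{Var}(s_n(z)) = O((nv)^{-2})$, whence $\mathrm{E}\int_{-A}^{A}|s_n - \mathrm{E}s_n|\,du = O((nv)^{-1})$, and Markov's inequality upgrades this to $O_p((nv)^{-1})$. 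Choosing $v \asymp n^{-3/5}a_n^{2/5}$ when $a_n \ge n^{-2/5}$ balances the integral term $O_p((nv)^{-1})$ against the modulus $O(v/a_n)$ and delivers the rate $O_p(n^{-2/5}a_n^{-2/5})$; in the complementary regime $a_n < n^{-2/5}$, the choice $v \asymp n^{-4/5}$ yields $O_p(n^{-1/5})$.

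The main obstacle I anticipate is the martingale-difference estimate in the quaternion setting. Because each quaternion column of $\mathbf X_n$ corresponds to a pair of complex columns of $\psi(\mathbf X_n)$, removing one column is a rank-two perturbation of $\psi(\mathbf S_n)$ rather than the rank-one perturbation familiar from the complex sample-covariance case. The standard Sherman--Morrison manipulation must therefore be replaced by its quaternionic analogue, and Lemma~\ref{yin} must be applied to ensure that the inverse of the truncated resolvent retains the Type-\uppercase\expandafter{\romannumeral1} block structure required for the relevant trace identities. Only once this structural point is in place does each martingale difference scale like $O(1/(nv))$ and the variance bound $O((nv)^{-2})$ follow uniformly in $u$, which is what feeds the balance above and produces the advertised rates.
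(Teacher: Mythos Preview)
Your proposal contains a genuine gap in the variance estimate, and this is where the argument breaks.

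You write that each martingale difference $(\mathrm E_k-\mathrm E_{k-1})s_p(z)$ is of order $O(1/(nv))$ and that ``Burkholder's inequality then yields $\mathrm{Var}(s_n(z))=O((nv)^{-2})$''. But with $n$ martingale differences each bounded in absolute value by $C/(nv)$, Burkholder (or simple orthogonality) gives only
\[
\mathrm{Var}\big(s_p(z)\big)\ \le\ \sum_{k=1}^n \mathrm E\big|(\mathrm E_k-\mathrm E_{k-1})s_p(z)\big|^2\ \le\ n\cdot\frac{C}{(nv)^2}\ =\ \frac{C}{nv^2},
\]
hence $\mathrm E|s_p-\mathrm Es_p|=O(n^{-1/2}v^{-1})$, not $O((nv)^{-1})$. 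Feeding this correct crude bound into your balance destroys the rates: for example with $a_n$ bounded away from~$0$ you would get at best $n^{-1/4}$, not $n^{-2/5}$. The paper does not use the crude interlacing bound here; it relies on the refined estimate of Lemma~\ref{le:8},
\[
\mathrm E\big|s_p(z)-\mathrm Es_p(z)\big|^2\ \le\ \frac{C}{n^2v^4}\big(\Delta+v/v_{y_p}\big),
\]
whose proof requires expanding each martingale difference via the resolvent identity into the pieces $b_n\theta_k\sigma_k$ and a quadratic-form remainder, and bounding these using Lemma~\ref{B.26} and the a~priori control on $|b_n|$. This is substantially more than the rank-two interlacing step you outline, and it also forces the constraint $v>n^{-1/2}$, which your choices $v\asymp n^{-3/5}a_n^{2/5}$ and $v\asymp n^{-4/5}$ violate.

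A second, smaller gap: the modulus bound $O(v/a_n)$ you obtain from ``the M--P density is bounded by $C/a_n$'' is useless in the critical regime $a_n<n^{-2/5}$, where $a_n$ may be arbitrarily small; with your $v=n^{-4/5}$ the term $v/a_n$ can blow up. One needs the edge-aware bound $O(v/v_{y_p})$ with $v_{y_p}=\sqrt{a_n}+\sqrt v$ (Lemma~\ref{ex1}), which exploits the square-root vanishing of the M--P density. The paper then applies Lemma~\ref{bai1993} directly with $F=F^{\mathbf S_n}$ and $G=F_{y_p}$ (not $G=\mathrm EF^{\mathbf S_n}$), bounds $\mathrm E\|F^{\mathbf S_n}-F_{y_p}\|$, and chooses $v=M_1n^{-2/5}$ (critical regime) or $v=M_2n^{-2/5}a_n^{1/10}$ (away from the edge); both satisfy $v>n^{-1/2}$ so that Lemma~\ref{le:8} applies and the advertised rates follow.
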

\begin{thm}\label{th:3}
Under the assumptions in Theorem \ref{th:1}, we have
\begin{align*}
\left\|F_p-F_{y_p}\right\|=\begin{cases}
O_{a.s.}\left(n^{-1/5}\right),& if \ a_n<n^{-2/5},\\
O_{a.s.}\left(n^{-2/5+\eta}a_n^{-2/5}\right),& if \ n^{-2/5}\le a_n<1.\end{cases}
\end{align*}
\end{thm}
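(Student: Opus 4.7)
The plan is to split
\begin{align*}
\bigl\|F^{\mathbf S_n}-F_{y_p}\bigr\|\le\bigl\|F^{\mathbf S_n}-{\rm E}F^{\mathbf S_n}\bigr\|+\bigl\|{\rm E}F^{\mathbf S_n}-F_{y_p}\bigr\|,
\end{align*}
bound the second (deterministic) term by Theorem \ref{th:1}, and establish the almost-sure rate directly on the random fluctuation $\bigl\|F^{\mathbf S_n}-{\rm E}F^{\mathbf S_n}\bigr\|$. The deterministic term already beats the targets of Theorem \ref{th:3}, so the entire task reduces to upgrading the in-probability control of the fluctuation (which underlies Theorem \ref{th:2}) to an almost-sure one.

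I would re-apply Bai's inequality (Lemma \ref{bai1993}) to the pair $(F^{\mathbf S_n},{\rm E}F^{\mathbf S_n})$ with the same parameters $A,B,a,v$ used in proving Theorem \ref{th:2}, so that the right-hand side is dominated by the integral
\begin{align*}
\int_{-A}^{A}\bigl|s_n(u+iv)-{\rm E}s_n(u+iv)\bigr|\,\mathrm{d}u,
\end{align*}
where $s_n$ denotes the Stieltjes transform of $F^{\mathbf S_n}$. Introduce the martingale decomposition
\begin{align*}
s_n-{\rm E}s_n=\sum_{k=1}^{n}({\rm E}_k-{\rm E}_{k-1})\,s_n=:\sum_{k=1}^{n}\gamma_k,
\end{align*}
where ${\rm E}_k$ is conditional expectation given the first $k$ columns of $\mathbf X_n$. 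A rank-one perturbation of $\mathbf S_n$ by the $k$th column, combined with Lemma \ref{yin} to keep resolvents inside the Type-I/Type-III framework proper to the quaternion setting, gives $\gamma_k=O\bigl((nv)^{-1}\bigr)$ uniformly. Burkholder's inequality for martingale differences then yields, for every integer $q\ge1$,
\begin{align*}
{\rm E}\bigl|s_n(z)-{\rm E}s_n(z)\bigr|^{2q}\le C_q\,(nv^2)^{-q}.
\end{align*}

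To pass to the almost-sure rate, fix $q$ sufficiently large in terms of $\eta$ and apply Markov's inequality with the above moment bound: in the regime $a_n\ge n^{-2/5}$, the probability that $\bigl\|F^{\mathbf S_n}-{\rm E}F^{\mathbf S_n}\bigr\|$ exceeds $\varepsilon\,n^{-2/5+\eta}a_n^{-2/5}$ is summable in $n$, so the Borel-Cantelli lemma closes the argument. In the boundary regime $a_n<n^{-2/5}$ the same scheme with $v\asymp n^{-3/5}$ yields the uniform $O_{a.s.}(n^{-1/5})$ bound. The $n^{\eta}$ factor is precisely the price of the large-$q$ Burkholder bound needed for summable tails; taking $q=1$ (Chebyshev) recovers exactly the rates of Theorem \ref{th:2}.

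The main obstacle lies in the uniform control of the martingale differences $\gamma_k$ in the quaternion regime. The classical complex argument replaces the quadratic form $\mathbf x_k^{*}(\mathbf S_n^{(k)}-z\mathbf I)^{-1}\mathbf x_k$ by a trace; here the resolvent lives among Type-III matrices, and by Lemma \ref{yin} its inverse is of Type-I. This block structure must be exploited so that the entries of the $2\times2$ blocks pair correctly with the quaternionic columns of $\mathbf X_n$ under conditioning. Coupled with the usual truncation and centralization steps, which are legitimized by the uniform sixth-moment assumption, this delivers the $(nv)^{-1}$ bound on $\gamma_k$ and allows the full Bai-inequality estimate to go through.
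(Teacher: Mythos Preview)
Your overall architecture---Bai's inequality, a martingale decomposition of $s_p-{\rm E}s_p$, high moment bounds, then Markov plus Borel--Cantelli---matches the paper's. The genuine gap is the moment bound you invoke. From the crude estimate $\gamma_k=O((nv)^{-1})$ and Burkholder you get ${\rm E}|s_p-{\rm E}s_p|^{2q}\le C_q(nv^2)^{-q}$, but this is far too weak to reach the rates of Theorem~\ref{th:3} (and, contrary to your closing remark, it does not recover Theorem~\ref{th:2} either). In the regime $a_n\ge n^{-2/5}$ with $v=M_2 n^{-2/5}a_n^{1/10}$ one has $nv^2\asymp n^{1/5}a_n^{1/5}$, so Markov against the target $r_n=n^{-2/5+\eta}a_n^{-2/5}$ gives ${\rm P}(|s_p-{\rm E}s_p|>\varepsilon r_n)\le C\,n^{(3/5-2\eta)q}a_n^{3q/5}$, which is not summable (indeed not even bounded) for small $\eta$. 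No other choice of $v$ rescues this: requiring both $v/v_{y_p}\le r_n$ and $(nv^2)^{-1/2}\le r_n$ leaves no admissible $v$ once $\eta<9/100$.

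What the paper actually uses is the sharper bound of Lemma~\ref{le:8},
\[
{\rm E}\bigl|s_p(z)-{\rm E}s_p(z)\bigr|^{2l}\le Cn^{-2l}v^{-4l}\bigl(\Delta+v/v_{y_p}\bigr)^{l},
\]
whose extra factor $(\Delta+v/v_{y_p})^{l}$ is precisely what converts the blow-up above into $n^{-2l\eta}$. Obtaining this factor requires much more than the uniform bound on $\gamma_k$: one expands each martingale difference through the identity $\xi_k=b_n\mathbf I_2+b_n\xi_k\boldsymbol{\varepsilon}_k$, controls the resulting quadratic-form fluctuations via Lemma~\ref{B.26}, and feeds back Lemma~\ref{le51} to generate the $(\Delta+v/v_{y_p})$ factor. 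Your Type-I/Type-III discussion is relevant to making those quadratic-form steps legitimate in the quaternion setting, but it does not by itself deliver the refined moment bound. A secondary point: applying Bai's inequality with $G={\rm E}F^{\mathbf S_n}$ rather than $G=F_{y_p}$ forces you to control the modulus of continuity of ${\rm E}F^{\mathbf S_n}$, which you do not address; the paper avoids this by working directly with $G=F_{y_p}$.
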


\section{Preliminaries}
Before proving the Theorem \ref{th:1}, we first truncate the entries of the matrix and renormalize them in order to obtain  the bound of $\left\|x_{jk}\right\|$, without changing the convergence rate of $F^{\mathbf S_n}$. The results are listed in Subsection  \ref{se:1}.
\subsection{Truncation}
We truncate the variables $x_{jk}$ at $n^{1/4}$. Denote the truncated entries and matrix by $\widetilde x_{jk}=x_{jk}I\left(\left\|x_{jk}\right\|<n^{1/4}\right)$ and $\mathbf{\widetilde X_n}=\left(\widetilde x_{jk}\right)$, respectively. Furthermore, let $\widetilde F^{\mathbf S_n}$ denote the ESD of the quaternion sample covariance matrix $\frac{1}{n}\mathbf{\widetilde X_n}\mathbf{\widetilde X_n}^*$. Then, by rank inequality Lemma \ref{le:1}, we have
\begin{align}\label{eq:1}
\left\|F^{\mathbf S_n}-\widetilde F^{\mathbf S_n}\right\|\le\frac{1}{p}\sum_{jk}I\left(\left\|x_{jk}\right\|\ge n^{1/4}\right).
\end{align}

Note that
\begin{align*}
{\rm E}\left(n^{1/2}{p}^{-1}\sum_{jk}I\left(\left\|x_{jk}\right\|\ge n^{1/4}\right)\right)
\le& n^{1/2}{p}^{-1}\sum_{jk}{\rm P}\left(\left\|x_{jk}\right\|\ge n^{1/4}\right)\\
\le& n^{-1}{p}^{-1}\sum_{jk}{\rm E}\left\|x_{jk}\right\|^6I\left(\left\|x_{jk}\right\|\ge n^{1/4}\right)\\
\le & M
\end{align*}
and
\begin{align*}
{\rm Var}\left(n^{1/2}{p}^{-1}\sum_{jk}I\left(\left\|x_{jk}\right\|\ge n^{1/4}\right)\right)
\le& n{p}^{-2}\sum_{jk}{\rm P}\left(\left\|x_{jk}\right\|\ge n^{1/4}\right)\\
\le& n^{-1/2}{p}^{-2}\sum_{jk}{\rm E}\left\|x_{jk}\right\|^6I\left(\left\|x_{jk}\right\|\ge n^{1/4}\right)\\
=&M n^{-1/2}.
\end{align*}
By Bernstein's inequality (see Lemma \ref{le:2}), for all small $\varepsilon>0$ and large $p, n$, it follows that
\begin{align*}
&{\rm P}\left(\left|\sum_{jk}n^{1/2}p^{-1}I\left(\left\|x_{jk}\right\|\ge n^{1/4}\right)\right|\ge M+\varepsilon \right)\\
\le &{\rm P}\left(\left|\sum_{jk}n^{1/2}p^{-1}\left[I\left(\left\|x_{jk}\right\|\ge n^{1/4}\right)-{\rm P}\left(\left\|x_{jk}\right\|\ge n^{1/4}\right)\right]\right|\ge \varepsilon \right)\\
\le&2\exp\left\{-\frac{\varepsilon^2}{2(Mn^{-1/2}+n^{1/2}p^{-1}\varepsilon)}\right\}\\
\triangleq&2\exp\left\{-cn^{1/2}\right\}\ (c>0)
\end{align*}
which is summable. Applying Borel-Cantelli lemma, we have
\begin{align}\label{eq:2}
&n^{1/2}p^{-1}\sum_{jk}I\left(\left\|x_{jk}\right\|\ge n^{1/4}\right)\le M+\varepsilon \ {a.s.}
\end{align}
Together with (\ref{eq:1}), (\ref{eq:2}) and Lemma \ref{le:3}, one has
\begin{align*}
L\left(F^{\mathbf S_n},\widetilde F^{\mathbf S_n}\right)=O_{{a.s.}}\left(n^{-1/2}\right).
\end{align*}
\subsection{Centralization}
 Write $\widehat x_{jk}=\widetilde x_{jk}-{\rm E}\widetilde x_{jk}$ and $\mathbf{\widehat X_n}=\left(\widehat x_{jk}\right)$. Denote by $\widehat F^{\mathbf S_n}$ the ESD of the quaternion sample covariance matrix $\frac{1}{n}\mathbf{\widehat X_n}\mathbf{\widehat X_n}^*$. Using Lemma \ref{le:4}, we get
 \begin{align}\label{eq:3}
 L\left(\widetilde F^{\mathbf S_n},\widehat F^{\mathbf S_n}\right)\le2\left\|\frac{1}{\sqrt n}\mathbf{\widetilde X_n}\right\|_2\left\|\frac{1}{\sqrt n}{\rm E}\mathbf{\widetilde X_n}\right\|_2+\left\|\frac{1}{\sqrt n}{\rm E}\mathbf{\widetilde X_n}\right\|_2^2.
 \end{align}
By elementary calculation, one obtains
 \begin{equation}\label{eq:4}
 \begin{split}
 \left\|\frac{1}{\sqrt n}{\rm E}\mathbf{\widetilde X_n}\right\|_2
 \le&\sqrt n\max_{jk}{\rm E}\left\|x_{jk}\right\|I\left(\left\|x_{jk}\right\|\ge n^{1/4}\right)\\
 \le&n^{-3/4}{\rm E}\left\|x_{jk}\right\|^6=O\left(n^{-3/4}\right).
 \end{split}
 \end{equation}
 By Remark 2.3 in \cite{li2013extreme}, we know that
 \begin{align*}
 \limsup\left\|\frac{1}{\sqrt n}\mathbf{\widehat X_n}\right\|_2\le1+\sqrt y,{a.s.}
 \end{align*}
which implies that
  \begin{equation}\label{eq:5}
  \begin{split}
 \limsup\left\|\frac{1}{\sqrt n}\mathbf{\widetilde X_n}\right\|_2\le&\limsup\left\|\frac{1}{\sqrt n}\mathbf{\widehat X_n}\right\|_2+ \left\|\frac{1}{\sqrt n}{\rm E}\mathbf{\widetilde X_n}\right\|_2\\
 \le&1+\sqrt y,{a.s..}
 \end{split}
 \end{equation}
 Together with (\ref{eq:3}), (\ref{eq:4}) and (\ref{eq:5}), we can show that
 \begin{align*}
 L\left(\widetilde F^{\mathbf S_n},\widehat F^{\mathbf S_n}\right)=O_{{a.s.}}\left(n^{-3/4}\right).
 \end{align*}
\subsection{Rescaling}
Write $\check{x}_{jk}=\sigma_{jk}^{-1}\widehat x_{jk}$ and $\mathbf{\check{X}_n}=\left(\check{x}_{jk}\right)$ where $\sigma_{jk}^2={\rm E}\left\|\widehat{x}_{jk}\right\|^2$.  Moreover, let $\check F^{\mathbf S_n}$ denote the ESD of the quaternion sample covariance matrix $\frac{1}{n}\mathbf{\check X_n}\mathbf{\check X_n}^*$. By Lemma \ref{le:4}, one has
 \begin{align}\label{eq:6}
 L\left(\check F^{\mathbf S_n},\widehat F^{\mathbf S_n}\right)\le2\left\|\frac{1}{\sqrt n}\mathbf{\widehat X_n}\right\|_2\left\|\frac{1}{\sqrt n}
 \left(\mathbf{\widehat X_n}-\mathbf{\check X_n}\right)\right\|_2+\left\|\frac{1}{\sqrt n}\left(\mathbf{\widehat X_n}-\mathbf{\check X_n}\right)\right\|_2^2.
 \end{align}
 By element calculation, we get
 \begin{equation}\label{eq:7}
 \begin{split}
\left\|\frac{1}{\sqrt n} \left(\mathbf{\widehat X_n}-\mathbf{\check X_n}\right)\right\|_2^2\le&\frac{2}{n}\sum_{jk}\left\|\widehat x_{jk}\right\|^2\left(\sigma_{jk}^{-1}-1\right)^2\\
\le&2p\max_{jk}\left|\sigma_{jk}^{-1}-1\right|^2\frac{1}{np}\sum_{jk}{\rm E}\left\|\widehat x_{jk}\right\|^2, \ a.s.\\
=&2p\max_{jk}\left|\sigma_{jk}-1\right|^2, \ a.s.\\
=&O_{{a.s.}}\left(n^{-1}\right)
\end{split}
 \end{equation}
 where the second inequality follows from that
 \begin{align*}
{\rm E}\left(\frac{1}{np}\sum_{jk}\left(\left\|\widehat x_{jk}\right\|^2-{\rm E}\left\|\widehat x_{jk}\right\|^2\right)\right)^2
\le\frac{1}{n^2p^2}\sum_{jk}{\rm E}\left\|\widehat x_{jk}\right\|^4\le Cn^{-2}
 \end{align*}
 and the last equality follows from that
 \begin{align*}
 \left|\sigma_{jk}-1\right|\le&1-\sigma_{jk}^2={\rm E}\left\|x_{jk}\right\|^2I\left(\left\|x_{jk}\right\|\ge n^{1/4}\right)
 \le n^{-1}{\rm E}\left\|x_{jk}\right\|^6=O\left(n^{-1}\right).
 \end{align*}
 From (\ref{eq:6}) and (\ref{eq:7}), we can show that
  \begin{align*}
 L\left(\check F^{\mathbf S_n},\widehat F^{\mathbf S_n}\right)=O_{{a.s.}}\left(n^{-1/2}\right).
 \end{align*}
 \subsection{Conclusion}\label{se:1}
 Combining the three subsections above, Lemma \ref{le:5} and Remark \ref{re:1}, we get
 \begin{align*}
 \left\|F^{\mathbf S_n}-F_{y_p}\right\|_2\le C\max\left\{ \left\|\check{ F}^{\mathbf S_n}-F_{y_p}\right\|_2,\frac{1}{\sqrt{na}+\sqrt[4]{n}}\right\}.
 \end{align*}
For brevity, we still use $x_{jk}$ to denote the variables after truncation and renormalization. Thus, to complete the proof of Theorem \ref{th:1}, we can further assume that
\begin{itemize}
\item 1): ${\rm E}x_{jk}=0$, ${\rm E}\left\|x_{jk}\right\|^2=1$,
\item 2): $\left\|x_{jk}\right\|<n^{1/4}$,
\item 3):$\sup_{jk}{\rm E}\left\|x_{jk}\right\|^6\le M$.
\end{itemize}

\section{Proof of Theorem \ref{th:1}}
The Stieltjes transform of  M-P law $F_{y_p}\left(x\right)$ is given by
\begin{align*}
s\left(z\right)=&\int_{-\infty}^{+\infty}\frac{1}{x-z}d{F_{y_p}}\left(x\right)
=\frac{{1 - y_p - z + \sqrt {{{\left(z - 1 - y_p\right)}^2} - 4y_p} }}{{2y_pz}}
\end{align*}
where $z=u+\upsilon i\in\mathbb{C}^+$. And the Stieltjes transform of ${F^{\mathbf S_n}}\left(x\right)$ is
$$s_p\left(z\right)=\int_{-\infty}^{+\infty}\frac{1}{x-z}d{F^{\mathbf S_n}}\left(x\right)=\frac{1}{2p}{\rm tr}\left(\mathbf S_n-z\mathbf I_{2p}\right)^{-1}.$$
Applying Lemma \ref{le:6}, one has\[{s_p}\left(z\right) = \frac{1}{{2p}}\sum\limits_{k = 1}^p {{\rm tr}\left(\frac{1}{n}\boldsymbol{\phi}_k^{\prime}\bar{\boldsymbol{\phi}}_k - z{\mathbf I_2} - \frac{1}{{{n^2}}}\boldsymbol{\phi}_k^{\prime}{\mathbf X}_{nk}^*{{\left(\frac{1}{n}{\mathbf X_{nk}}{\mathbf X_{nk}^*} - z{\mathbf I_{2p - 2}}\right)}^{-1}}{\mathbf X_{n k}}\bar{\boldsymbol{\phi}}_k\right )}^{ - 1}\]
where \ ${\mathbf X_{nk}}$ is the matrix resulting from deleting the $k$-th quaternion row of \ $\mathbf X_n$, and $\boldsymbol{\phi}_k^{\prime}$  is the quaternion vector of order $1\times n$ obtained from the $k$-th quaternion row of \ $\mathbf X_n$. Set \begin{align*}
{\boldsymbol \varepsilon _k} &=\frac{1}{n}\boldsymbol{\phi}_k^{\prime}\bar{\boldsymbol{\phi}}_k - z{\mathbf I_2} - \frac{1}{{{n^2}}}\boldsymbol{\phi}_k^{\prime}{\mathbf X}_{nk}^*{{\left(\frac{1}{n}{\mathbf X_{nk}}{\mathbf X_{nk}^*} - z{\mathbf I_{2p - 2}}\right)}^{-1}}{\mathbf X_{nk}}\bar{\boldsymbol{\phi}}_k \\
&- \left(1 - z - {y_p} - {y_p}z{\rm E}{s_p}\left(z\right)\right){\mathbf I_2}.
\end{align*}
We can show that $${\rm E}{s_p}\left(z\right) = \frac{1}{{1 - z - {y_p} - {y_p}z{\rm E}{s_p}\left(z\right)}} + {\delta _n}$$
where
\begin{align}\label{al11}
{\delta _n} = & - \frac{1}{2p\left({1 - z - {y_p} - {y_p}z{\rm E}{s_p}\left(z\right)}\right)}\notag\\
\times&\sum\limits_{k = 1}^p {\rm Etr}\left\{{\boldsymbol\varepsilon _k}{\left(\left(1 - z - {y_p} - {y_p}z{\rm E}{s_p}(z)\right){\mathbf I_2} + {\boldsymbol\varepsilon _k}\right)^{ - 1}}\right\}.
\end{align}
From \cite{li2013convergence}, we have known that the root of the equation above is
\begin{align*}
{\rm E}s_p\left(z\right)=\frac{1-z-y_p+y_pz\delta_n+\sqrt{\left(1-z-y_p-y_pz\delta_n\right)^2-4y_pz}}{2y_pz}.
\end{align*}

To begin with, we estimate the first integral in (\ref{eq:8}). Since
\begin{align*}
&\left|{\rm E}s_p\left(z\right)-s\left(z\right)\right|\\
\le&\left|\frac{\delta_n}{2}\right|\left[1+\frac{\left|2\left(z+y_p-1\right)+y_pz\delta_n\right|}
{\sqrt{\left(z+y_p-1\right)^2-4y_pz}+\sqrt{\left(z+y_p-1+y_pz\delta_n\right)^2-4y_pz}}\right],
\end{align*}
we need to find a bound for $\left|\delta_n\right|$. For brevity, we shall use the following notation:
\begin{align*}
&v_{y_p}=\sqrt a_n+ \sqrt v=1-\sqrt y_p+\sqrt v\\
&\mathbf S_{nk}=\frac{1}{n}{\mathbf X_{nk}}{\mathbf X_{nk}^*}\\
&b_n=b_n\left(z\right)=\frac{1}{{ z + {y_p}-1+ {y_p}z{\rm E}{s_p}\left(z\right)}}\\
&\xi_k=\xi_k\left(z\right)=\bigg(\big(z + {y_p}-1 +{y_p}z{\rm E}{s_p}(z)\big){\mathbf I_2} - {\boldsymbol\varepsilon _k}\bigg)^{-1}.
\end{align*}
Using Lemma \ref{yin}, we get the form of \ $\left({\mathbf S_{n}} - z{\mathbf I_{2p}}\right)^{-1}$ as
\[\left( {\begin{array}{*{20}{c}}
{{t_1}}&0&{{a_{12}}}&{{b_{12}}}& \cdots \\
0&{{t_1}}&{ {{c}_{12}}}&{{{d}_{12}}}& \cdots \\
{{{d}_{12}}}&{ - {b_{12}}}&{{t_2}}&0& \cdots \\
{{{-c }_{12}}}&{{a_{12}}}&0&{{t_2}}& \cdots \\
 \vdots & \vdots & \vdots & \vdots & \ddots
\end{array}} \right).\]
That is to say, $\boldsymbol\varepsilon_k$ is a scalar matrix. Denote by $\boldsymbol\alpha_k$ the first column of $\boldsymbol{\phi}_k$ and by $\boldsymbol\beta_k$  the second column of $\boldsymbol{\phi}_k$,  
 then, $\boldsymbol\varepsilon_k=\theta_k\mathbf I_2$
where $\theta_k=\frac{1}{n}{\boldsymbol\alpha_k^{\prime}}{{\bar {\boldsymbol\alpha} }_k} - z - \frac{1}{{{n^2}}}{\boldsymbol\alpha_k^{\prime}}{\mathbf X_{nk}^*}{{\left( \mathbf S_{nk}- z{\mathbf I_{2p - 2}}\right)}^{-1}}{\mathbf X_{nk}}{{\bar {\boldsymbol\alpha} }_k}-\left(1 - z - {y_p} - {y_p}z{\rm E}{s_p}\left(z\right)\right).$ Rewrite
$$\xi_k=-\frac{1}{\frac{1}{n}{\boldsymbol\alpha_k^{\prime}}{{\bar {\boldsymbol\alpha} }_k} - z - \frac{1}{{{n^2}}}{\boldsymbol\alpha_k^{\prime}}{\mathbf X_{nk}^*}{{\left( \mathbf S_{nk}- z{\mathbf I_{2p - 2}}\right)}^{-1}}{\mathbf X_{nk}}{{\bar {\boldsymbol\alpha} }_k}}\mathbf I_2.$$
Noting that
\begin{align*}
&\Im\left({\frac{1}{n}{\boldsymbol\alpha_k^{\prime}}{{\bar {\boldsymbol\alpha} }_k} - z - \frac{1}{{{n^2}}}{\boldsymbol\alpha_k^{\prime}}{\mathbf X_{nk}^*}{{\left( \mathbf S_{nk}- z{\mathbf I_{2p - 2}}\right)}^{-1}}{\mathbf X_{nk}}{{\bar {\boldsymbol\alpha} }_k}}\right)\\
=&-v\left({1+ \frac{1}{{{n^2}}}{\boldsymbol\alpha_k^{\prime}}{\mathbf X_{nk}^*}{{\left( \mathbf S_{nk}- z{\mathbf I_{2p - 2}}\right)}^{-1}}{{\left( \mathbf S_{nk}- \bar z{\mathbf I_{2p - 2}}\right)}^{-1}}{\mathbf X_{nk}}{{\bar {\boldsymbol\alpha} }_k}}\right)<-v,
\end{align*}
one gets
\begin{align}\label{eq:9}
\left|\frac{1}{\frac{1}{n}{\boldsymbol\alpha_k^{\prime}}{{\bar {\boldsymbol\alpha} }_k} - z - \frac{1}{{{n^2}}}{\boldsymbol\alpha_k^{\prime}}{\mathbf X_{nk}^*}{{\left( \mathbf S_{nk}- z{\mathbf I_{2p - 2}}\right)}^{-1}}{\mathbf X_{nk}}{{\bar {\boldsymbol\alpha} }_k}}\right|\le v^{-1}.
\end{align}
We are now in a position to estimate  $\left|\delta_n\right|$. By (\ref{eq:9}) and the fact $\xi_k=b_n\mathbf I_2+b_n\xi_k\boldsymbol{\varepsilon_k}$, one has
\begin{align*}
\left|\delta_n\right|\le&\frac{1}{2p}\sum_{k=1}^p\left(\left|b_n^2\right|\left|{\rm Etr} \boldsymbol{\varepsilon_k}\right|+\left|b_n^3\right|{\rm E}\left|{\rm tr}\boldsymbol\varepsilon_k^2\right|+\left|b_n^4\right|{\rm E}\left|{\rm tr}\boldsymbol\varepsilon_k^3\right|+\left|b_n^4\right|v^{-1}{\rm E}\left|{\rm tr}\boldsymbol\varepsilon_k^4\right|\right)\\
=&\frac{1}{2p}\sum_{k=1}^p\left(\left|b_n^2\right|\left|{\rm Etr} \boldsymbol{\varepsilon_k}\right|+\frac{\left|b_n^3\right|}{2}{\rm E}\left|{\rm tr}\boldsymbol{\varepsilon_k}\right|^2+\frac{\left|b_n^4\right|}{4}{\rm E}\left|{\rm tr}\boldsymbol{\varepsilon_k}\right|^3+\frac{\left|b_n^4\right|}{8}v^{-1}{\rm E}\left|{\rm tr}\boldsymbol{\varepsilon_k}\right|^4\right).
\end{align*}
Next, we shall complete the estimation of $\left|\delta_n\right|$ by the following four steps under the conditions $v>n^{-1/2}$ and $|b_n|\le 2/\sqrt{y_p|z|}$.
\begin{description}
  \item[Step 1] {\bf the estimator of $\left|{\rm Etr} \boldsymbol{\varepsilon_k}\right|$}.

 By Lemma 3.6 in \cite{li2013convergence}, we have
\begin{align*}
\left|{\rm Etr}\boldsymbol{\varepsilon_k}\right|\le\frac{C}{nv}.
\end{align*}
  \item[Step 2] {\bf the estimator of ${\rm E}\left|{\rm tr}\boldsymbol{\varepsilon_k}\right|^2$.}

Let $\widetilde{\rm E}\left(\cdot\right)$ denote the conditional expectation given $\big\{\mathbf x_j,j=1,\cdots,n;j\neq k\big\}$, then we get
\begin{align}\label{al1}
&{\rm E}\left|{\rm tr}\boldsymbol\varepsilon_k\right|^2\notag\\
\leq&{3}\left[{\rm E}\left|{\rm tr}\boldsymbol\varepsilon_k-\widetilde{\rm E}{\rm tr}\boldsymbol\varepsilon_k\right|^2+{\rm E}\left|\widetilde{\rm E}{\rm tr}\boldsymbol\varepsilon_k-{\rm E}{\rm tr}\boldsymbol\varepsilon_k\right|^2+\left|{\rm E}{\rm tr}\boldsymbol\varepsilon_k\right|^2\right].
\end{align}
By Lemma \ref{B.26} and Lemma \ref{le51}, it follows that
\begin{align}\label{al2}
&{\rm E}\left|{\rm tr}\boldsymbol\varepsilon_k-\widetilde{\rm E}{\rm tr}\boldsymbol\varepsilon_k\right|^2\notag\\
\le&2{\rm E}\left|{\rm tr}\left(\frac{1}{n}\boldsymbol\phi_k^{\prime}\boldsymbol{\bar{\phi}}_k-\mathbf I_2\right)\right|^2
+\frac{2}{n^4}{\rm E}\bigg|\bigg({\rm tr}\boldsymbol{\phi}_k^{\prime}{\mathbf X}_{nk}^*{{\left({\mathbf S_{nk}} - z{\mathbf I_{2p - 2}}\right)}^{-1}}{\mathbf X_{nk}}\bar{\boldsymbol{\phi}}_k\notag\\
&-{\rm tr}{\mathbf X}_{nk}^*{{\left({\mathbf S_{nk}} -z{\mathbf I_{2p - 2}}\right)}^{-1}}{\mathbf X_{nk}}\bigg)\bigg|^2\notag\\
\le&\frac{8}{n}+\frac{C}{n^4}{\rm Etr}\left({\mathbf X}_{nk}^*{{\left({\mathbf S_{nk}} -z{\mathbf I_{2p - 2}}\right)}^{-1}}{\mathbf X_{nk}}{\mathbf X}_{nk}^*{{\left({\mathbf S_{nk}} -\bar{z}{\mathbf I_{2p - 2}}\right)}^{-1}}{\mathbf X_{nk}}\right)\notag\\
\le&C\left[\frac{1}{n}+\frac{\left|u\right|^2}{n^2}{\rm Etr}\left(\left(\mathbf S_{nk}-u\mathbf I_{2p-2}\right)^2+v^2\mathbf I_{2p-2}\right)^{-1}\right]\notag\\
\le&C\left[\frac{1}{n}+\frac{\left|u\right|^2}{nv^2}\left(\Delta+{v}/{v_{y_p}}\right)\right].
\end{align}
Applying Lemma \ref{le:8}, we obtain
\begin{align}\label{al3}
&{\rm E}\left|\widetilde{\rm E}{\rm tr}\boldsymbol{\varepsilon}_k-{\rm E}{\rm tr}\boldsymbol\varepsilon_k\right|^2\notag\\
=&\frac{\left|z\right|^2}{n^2}{\rm E}\left|{\rm tr}\left({\mathbf S_{nk}} - z{\mathbf I_{2p - 2}}\right)^{-1}-{\rm E}{\rm tr}\left({\mathbf S_{nk}} -z{\mathbf I_{2p - 2}}\right)^{-1}\right|^2\notag\\
\le&\frac{C\left|z\right|^2}{n^2}\left[{\rm E}\left|{\rm tr}\left({\mathbf S_{n}} - z{\mathbf I_{2p}}\right)^{-1}-{\rm E}{\rm tr}\left({\mathbf S_{n}} -z{\mathbf I_{2p}}\right)^{-1}\right|^2+\frac{1}{v^2}\right]\notag\\
=&{C\left|z\right|^2}\left[{\rm E}\left|s_p\left(z\right)-{\rm E}s_p\left(z\right)\right|^2+\frac{1}{{n^2}v^2}\right]\notag\\
\le&{C\left|z\right|^2}\left[n^{-2}v^{-4}\left(\Delta+v/v_{y_p}\right)+\frac{1}{{n^2}v^2}\right]\notag\\
\le&\frac{C\left|z\right|^2}{n^2v^4}\left(\Delta+v/v_{y_p}\right).
\end{align}
Together with Step 1, (\ref{al1}), (\ref{al2}), and (\ref{al3}), we get
\begin{align*}
{\rm E}\left|{\rm tr}\boldsymbol{\varepsilon_k}\right|^2
\le&C\left(\frac{1}{n}+\frac{\left|z\right|^2}{nv^2}\left(\Delta+{v}/{v_{y_p}}\right)\right).
\end{align*}
  \item[Step 3] {\bf the estimator of ${\rm E}\left|{\rm tr}\boldsymbol{\varepsilon_k}\right|^4$}.

Similar to (\ref{al1}), the estimand can be written as
\begin{align}\label{al6}
&{\rm E}\left|{\rm tr}\boldsymbol{\varepsilon_k}\right|^4\leq{27}\left[{\rm E}\left|{\rm tr}\boldsymbol\varepsilon_k-\widetilde{\rm E}{\rm tr}\boldsymbol\varepsilon_k\right|^4+{\rm E}\left|\widetilde{\rm E}{\rm tr}\boldsymbol\varepsilon_k-{\rm E}{\rm tr}\boldsymbol\varepsilon_k\right|^4+\left|{\rm E}{\rm tr}\boldsymbol\varepsilon_k\right|^4\right].
\end{align}
First of all, we estimate the first term of righthand side of (\ref{al6}). Using Lemma \ref{B.26} and Lemma \ref{le51}, it follows that
\begin{align}
&{\rm E}\left|{\rm tr}\boldsymbol\varepsilon_k-\widetilde{\rm E}{\rm tr}\boldsymbol\varepsilon_k\right|^4\notag\\
\le&8{\rm E}\left|{\rm tr}\left(\frac{1}{n}\boldsymbol\phi_k^{\prime}\boldsymbol{\bar{\phi}}_k-\mathbf I_2\right)\right|^4
+\frac{8}{n^8}{\rm E}\bigg|{\rm tr}\boldsymbol{\phi}_k^{\prime}{\mathbf X}_{nk}^*{{\left({\mathbf S_{nk}} - z{\mathbf I_{2p - 2}}\right)}^{-1}}{\mathbf X_{nk}}\bar{\boldsymbol{\phi}}_k\notag\\
&-{\rm tr}{\mathbf X}_{nk}^*{{\left({\mathbf S_{nk}} -z{\mathbf I_{2p - 2}}\right)}^{-1}}{\mathbf X_{nk}}\bigg|^4\notag\\
\le&\frac{128}{n^2}+\frac{C}{n^4}{\rm E}\bigg\{\varphi_{8}{\rm tr}\left(\left(\mathbf S_{nk}-z\mathbf I_{2p-2}\right)^{-1}\mathbf S_{nk}\left(\mathbf S_{nk}-\overline z\mathbf I_{2p-2}\right)^{-1}\mathbf S_{nk}\right)^2\notag\\
&+\left[\varphi_4{\rm tr}\left(\left(\mathbf S_{nk}-z\mathbf I_{2p-2}\right)^{-1}\mathbf S_{nk}\left(\mathbf S_{nk}-\overline z\mathbf I_{2p-2}\right)^{-1}\mathbf S_{nk}\right)\right]^2\bigg\}\notag\\
\le&\frac{C}{n^2}+\frac{C}{n^4}\bigg\{\varphi_{8}\left(n+\frac{\left|z\right|^4}{v^2}{\rm Etr}\left(\left(\mathbf S_{nk}-u\mathbf I_{2p-2}\right)^{2}+v^2\mathbf I_{2p-2}\right)^{-1}\right)\notag\\
&+\left[n+{\left|z\right|^2}{\rm Etr}\left(\left(\mathbf S_{nk}-u\mathbf I_{2p-2}\right)^{2}+v^2\mathbf I_{2p-2}\right)^{-1}\right]^2\bigg\}\notag\\
\le&C\left[\frac{1}{n^2}+\frac{\left|z\right|^2}{n^2v^4}\left(\Delta+{v}/v_{y_p}\right)^2\right]\notag
\end{align}
where $\varphi_{8}\le Mn^{1/2}$.
Employing Lemma \ref{le:8}, one has
\begin{align*}
&{\rm E}\left|\widetilde{\rm E}{\rm tr}\boldsymbol{\varepsilon}_k-{\rm E}{\rm tr}\boldsymbol\varepsilon_k\right|^4\\
=&\frac{\left|z\right|^4}{n^4}{\rm E}\left|{\rm tr}\left({\mathbf S_{nk}} - z{\mathbf I_{2p - 2}}\right)^{-1}-{\rm E}{\rm tr}\left({\mathbf S_{nk}} -z{\mathbf I_{2p - 2}}\right)^{-1}\right|^4\\
\le&\frac{C\left|z\right|^4}{n^4}\left[{\rm E}\left|{\rm tr}\left({\mathbf S_{n}} - z{\mathbf I_{2p}}\right)^{-1}-{\rm E}{\rm tr}\left({\mathbf S_{n}} -z{\mathbf I_{2p}}\right)^{-1}\right|^4+\frac{1}{v^4}\right]\\
=&{C\left|z\right|^4}\left[{\rm E}\left|s_p\left(z\right)-{\rm E}s_p\left(z\right)\right|^4+\frac{1}{{n^4}v^4}\right]\\
\le&{C\left|z\right|^4}\left[n^{-4}v^{-8}\left(\Delta+v/v_{y_p}\right)^2+\frac{1}{{n^4}v^4}\right]\\
\le&\frac{C\left|z\right|^4}{n^2v^4}\left(\Delta+v/v_{y_p}\right)^2.
\end{align*}
Combining the two inequalities above with Step 1, (\ref{al6}) can be estimated by
\begin{align*}
{\rm E}\left|{\rm tr}\boldsymbol{\varepsilon_k}\right|^4
\le&C\left[\frac{1}{n^2}+\frac{\left|z\right|^4}{n^2v^4}\left(\Delta+{v}/{v_{y_p}}\right)^2\right].
\end{align*}
  \item[Step 4] {\bf the estimator of ${\rm E}\left|{\rm tr}\boldsymbol{\varepsilon_k}\right|^3$}.

By Cauchy's inequality, Step 2 and Step 3, we can easily acquire
\begin{align*}
{\rm E}\left|{\rm tr}\boldsymbol{\varepsilon_k}\right|^3\le&\left({\rm E}\left|{\rm tr}\boldsymbol{\varepsilon_k}\right|^2\right)^{1/2}\left({\rm E}\left|{\rm tr}\boldsymbol{\varepsilon_k}\right|^4\right)^{1/2}\\
\le&\left[\frac{1}{n^{3/2}}+\frac{\left|z\right|^3}{n^{3/2}v^3}\left(\Delta+{v}/{v_{y_p}}\right)^{3/2}\right].
\end{align*}
\end{description}
Assume that $\left|b_n\right|\le 2/\sqrt{y_p\left|z\right|}$. Then, the four steps above yield
\begin{align}\label{al7}
\left|\delta_n\right|\le C_0n^{-1}v^{-3}\left(\Delta+{v}/{v_{y_p}}\right)^2.
\end{align}

By Lemma \ref{8.21}, if $\left|\delta_n\right|<v/\left[v_{y_p}10\left(A+1\right)^2\right]$, then $\Delta\le Cv/v_{y_p}$. Therefore, our next goal is to find a possible value of the  set $\left\{v:\left|\delta_n\right|\le v/\left[v_{y_p}10\left(A+1\right)^2\right]\right\}$ with $vv_{y_p}\sim Cn^{-1/2}$. Define $$\mathscr{F}=\left\{vv_{y_p}^{1/2}:vv_{y_p}^{1/2}>M_0n^{-1/2},\left|\delta_n\right|\le v/\left[v_{y_p}10\left(A+1\right)^2\right]\right\},$$
where $M_0=\sqrt{\frac{C_0\left(C_2+2\right)^2}{10\left(A+1\right)^2}}$ and $C_2$ is the constant given in Lemma \ref{8.21}. It is not difficult to verify $\mathscr{F}\neq\varnothing$. In fact, by (\ref{al11}), (\ref{eq:9}) and
\begin{align*}
\left|b_n\right|\le\frac{1}{\Im\left({ z + {y_p}-1+ {y_p}z{\rm E}{s_p}\left(z\right)}\right)}\le v^{-1},
\end{align*}
we have
\begin{align*}
\left|\delta_n\right|\le\frac{1}{2pv^2}\sum_{k=1}^p\left[\left|{\rm Etr}\boldsymbol\varepsilon_k\right|+v^{-1}{\rm E}\left|{\rm tr}\boldsymbol\varepsilon_k^2\right|\right]\le\frac{C_1}{nv^5}.
\end{align*}
Choosing $v_0=\sqrt[6]{10C_1\left(1+A\right)^2/n}$, the inequality above turns out to be
\begin{align*}
\left|\delta_n\right|\le v_0/\left[10\left(A+1\right)^2\right].
\end{align*}
This indicates $v_0\in\mathscr{F}$, for all large $n$.

We assert that the infimum of $\mathscr{F}$ is $M_0n^{-1/2}$, which is denoted by $v_1v_{y_p}^{1/2}\left(v_1\right)$. If it is not the case, then by the continuity of various functions involved, there must exist $z_2=u_2+v_2i$ with $u_2\in[-A,A]$, $v_2v_{y_p}^{1/2}\left(v_2\right)\in\mathscr{F}$  and such that $\left|\delta_n\left(z_2\right)\right|=v_2/\left[v_{y_p}\left(v_2\right)10\left(A+1\right)^2\right]$ and $\left|\delta_n\left(z_1\right)\right|\le v_2/\left[v_{y_p}\left(v_2\right)10\left(A+1\right)^2\right]$ for any $z_1=u+iv_2$, $u\in[-A,A]$.
 Then, by Lemma \ref{leb}, $|b_n (z_1)|\le \frac2{\sqrt{y_p|z_1|}}$, the inequality  (\ref{al7}) holds. By Lemma \ref{8.21}, we get $$\Delta\le C_2v_2/v_{y_p}\left(v_2\right).$$
Combining the equality above with $v_2v_{y_p}^{1/2}\left(v_2\right)>M_0n^{-1/2}$, it follows that
\begin{align*}
\left|\delta_n\left(z_2\right)\right|< v_2/\left[v_{y_p}\left(v_2\right)10\left(A+1\right)^2\right].
\end{align*}
This leads to a contradiction with $\left|\delta_n\left(z_2\right)\right|=v_2/\left[v_{y_p}\left(v_2\right)10\left(A+1\right)^2\right]$. Hence, $v_1v_{y_p}^{1/2}\left(v_1\right)=M_0n^{-1/2}$ and $\left|\delta_n\right|\le v_1/\left[v_{y_p}\left(v_1\right)10\left(A+1\right)^2\right]$. By Lemma \ref{leb} and Lemma \ref{8.21}, we get $\Delta\le Cv_1/v_{y_p}\left(v_1\right)$.

We shall complete the proof of Theorem \ref{th:1} in the following two cases.
\begin{description}
  \item[Case 1] When ${a_n}>v_1$, we get
\begin{align*}
v_1\sqrt[4] {a_n}\le v_1v_{y_p}^{1/2}\left(v_1\right)=M_0n^{-1/2} \ \Rightarrow v_1\le M_0 n^{-1/2}/\sqrt [4]{a_n}
\end{align*}
and
\begin{align*}
\Delta\le Cv_1/\sqrt {a_n}\le Cn^{-1/2}{a_n^{-3/4}}.
\end{align*}
  \item[Case 2] When ${a_n}\le v_1$, one acquires
\begin{align*}
v_1^{5/4}\le v_1v_{y_p}^{1/2}\left(v_1\right)=M_0n^{-1/2} \ \Rightarrow v_1\le M_0 n^{-2/5}
\end{align*}
and
\begin{align*}
\Delta\le C\sqrt{v_1}\le Cn^{-1/5}.
\end{align*}
\end{description}
Note that $a_n=v_1$ is equivalent to $v_1=\left(\frac {M_0}{2}\right)^{4/5}n^{-2/5}$. Thus, the above two cases is the same as stated in Theorem \ref{th:1}.
This completes the proof of Theorem \ref{th:1}.

\section{Proof of Theorem \ref{th:2}}
Applying Lemma \ref{bai1993}, Lemma \ref{ex}, and Lemma \ref{ex1}, one has
\begin{align}\label{le1}
&{\rm E}\left\|F_p-F_{y_p}\right\|\notag\\
\le &\bigg(\int_{-A}^A{\rm E}\left(\left|s_p\left(z\right)-s_{y_p}\left(z\right)\right|\right)\mathrm du
+2\pi v^{-1} \int_{\left|x\right|>B}\left|1-{\rm E}F_p\left(x\right)\right|\mathrm{d}x \notag\\
&+v^{-1}\sup_x\int_{\left|t\right|<2av}\left|F_{y_p}\left(x+t\right)-F_{y_p}\left(x\right)\right|\mathrm dt\bigg)\notag\\
\le &C\bigg(\int_{-A}^A{\rm E}\left(\left|s_p\left(z\right)-{\rm E}s_p\left(z\right)\right|\right)\mathrm du+\int_{-A}^A
\left(\left|{\rm E}s_p\left(z\right)-s_{y_p}\left(z\right)\right|\right)\mathrm du\notag\\
&+o\left(n^{-2}\right)+v/v_{y_p}\bigg).
\end{align}
By Lemma \ref{le:8}, we have
\begin{align*}
{\rm E}\left(\left|s_p\left(z\right)-{\rm E}s_p\left(z\right)\right|\right)\le&\left[{\rm E}\left(\left|s_p\left(z\right)-{\rm E}s_p\left(z\right)\right|^2\right)\right]^{1/2}\\
\le&Cn^{-1}v^{-2}\left(\Delta+v/v_{y_p}\right)^{1/2}.
\end{align*}
We will estimate ${\rm E}\left\|F_p-F_{y_p}\right\|$ according to the following two cases.
\begin{description}
  \item[Case 1] When $a_n<n^{-2/5}$, choosing $v=M_1n^{-2/5}$ and due to $\Delta=O\left(n^{-1/5}\right)$, it follows that
\begin{align*}
{\rm E}\left(\left|s_p\left(z\right)-{\rm E}s_p\left(z\right)\right|\right)\le Cn^{-3/10}.
\end{align*}
From the proof of Theorem \ref{th:1}, we have known that
$$\int_{-A}^A
\left(\left|{\rm E}s_p\left(z\right)-s_{y_p}\left(z\right)\right|\right)\mathrm du=O\left(n^{-1/5}\right).$$
Consequently, we obtain
$${\rm E}\left\|F_p-F_{y_p}\right\|=O\left(n^{-1/5}\right).$$
  \item[Case 2] When $a_n>n^{-2/5}$, selecting $v=M_2n^{-2/5}a_n^{1/10}$ and owing to $\Delta=\left(n^{-1/2}a_n^{-3/4}\right)$, one gets
$${\rm E}\left(\left|s_p\left(z\right)-{\rm E}s_p\left(z\right)\right|\right)\le Cn^{-2/5}a_n^{-2/5}.$$
From the proof of Theorem \ref{th:1}, it has been proved that
$$\int_{-A}^A
\left(\left|{\rm E}s_p\left(z\right)-s_{y_p}\left(z\right)\right|\right)\mathrm du=O\left(n^{-1/2}a_n^{-3/4}\right).$$
Hence, we have
$${\rm E}\left\|F_p-F_{y_p}\right\|=O\left(n^{-2/5}a_n^{-2/5}\right).$$
\end{description}
The proof of Theorem \ref{th:2} is complete.

\section{Proof of Theorem \ref{th:3}}
By (\ref{le1}) and the proof of Theorem \ref{th:2}, it suffices to show that
\begin{align*}
\int_{-A}^A{\rm E}\left(\left|s_p\left(z\right)-s_{y_p}\left(z\right)\right|\right)\mathrm du=\begin{cases}
O_{a.s.}\left(n^{-1/5}\right),& if \ a_n<n^{-2/5},\\
O_{a.s.}\left(n^{-2/5+\eta}a_n^{-2/5}\right),& if \ n^{-2/5}\le a_n<1.\end{cases}
\end{align*}
By Lemma \ref{le:8}, we have
\begin{align*}
{\rm E}\left(\left|s_p\left(z\right)-{\rm E}s_p\left(z\right)\right|^{2l}\right)
\le&Cn^{-2l}v^{-4l}\left(\Delta+v/v_{y_p}\right)^{l}.
\end{align*}
We proceed to complete the proof by two cases.
\begin{description}
  \item[Case 1] When $a_n<n^{-2/5}$, choosing $v=M_1n^{-2/5}$, we obtain
\begin{align*}
&n^{2l/5}{\rm E}\left(\left|s_p\left(z\right)-{\rm E}s_p\left(z\right)\right|^{2l}\right)\\
\le&Cn^{2l/5}n^{-2l}v^{-4l}\left(\Delta+v/v_{y_p}\right)^{l}\\
\le& Cn^{-l/5}.
\end{align*}
Then the result follows by choosing $l>5$.
  \item[Case 2] When $a_n>n^{-2/5}$, selecting $v=M_2n^{-2/5}a_n^{1/10}$, one has
\begin{align*}
&n^{2l\left(2/5-\eta\right)}a_n^{4l/5}{\rm E}\left(\left|s_p\left(z\right)-{\rm E}s_p\left(z\right)\right|^{2l}\right)\\
\le&Cn^{2l\left(2/5-\eta\right)}a_n^{4l/5}n^{-2l}v^{-4l}\left(\Delta+v/v_{y_p}\right)^{l}\\
\le&  Cn^{-2l\eta}.
\end{align*}
Then the result follows by choosing $l>1/(2\eta)$.
\end{description}
This completes the proof of Theorem \ref{th:3}.

\section{Some Auxiliary Lemmas}
In this section, we establish three lemmas which are of importance in proving the main theorems.
\begin{lemma}\label{le51}
For $z=u+iv$ with $v>0$, one gets
\begin{align*}
{\rm Etr}\left(\left(\mathbf S_{nk}-u\mathbf I_{2p-2}\right)^2+v^2\mathbf I_{2p-2}\right)^{-1}\le&\frac{Cn}{v^2}\left(\Delta+v/v_{y_p}\right).
\end{align*}
\end{lemma}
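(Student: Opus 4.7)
The plan is to convert the trace into the imaginary part of a Stieltjes transform and then compare with the Stieltjes transform of the Mar\v{c}enko--Pastur law. The starting observation is the identity
\begin{align*}
\mathrm{tr}\bigl((\mathbf S_{nk}-u\mathbf I_{2p-2})^2+v^2\mathbf I_{2p-2}\bigr)^{-1}
=\sum_{j}\frac{1}{(\lambda_j-u)^2+v^2}
=\frac{1}{v}\Im\,\mathrm{tr}\bigl(\mathbf S_{nk}-z\mathbf I_{2p-2}\bigr)^{-1},
\end{align*}
where $\lambda_j$ denote the eigenvalues of $\mathbf S_{nk}$. Thus the question reduces to estimating $\Im\,\mathrm{E\,tr}(\mathbf S_{nk}-z\mathbf I_{2p-2})^{-1}$.

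Next, I would pass from $\mathbf S_{nk}$ to $\mathbf S_n$. Since $\mathbf S_{nk}$ is obtained from $\mathbf S_n$ by deleting the two complex rows/columns corresponding to the $k$-th quaternion row, a standard Cauchy-interlacing argument gives
\begin{align*}
\bigl|\mathrm{tr}(\mathbf S_n-z\mathbf I_{2p})^{-1}-\mathrm{tr}(\mathbf S_{nk}-z\mathbf I_{2p-2})^{-1}\bigr|\le \frac{C}{v},
\end{align*}
so $\Im\,\mathrm{E\,tr}(\mathbf S_{nk}-z\mathbf I_{2p-2})^{-1}\le 2p\,\Im\,\mathrm{E}s_p(z)+C/v$. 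The problem is therefore reduced to showing
\begin{align*}
\Im\,\mathrm{E}s_p(z)\le C\bigl(\Delta+v/v_{y_p}\bigr)/v.
\end{align*}

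To obtain this bound I would split
$\Im\,\mathrm{E}s_p(z)\le\bigl|\Im\,\mathrm{E}s_p(z)-\Im\,s_{y_p}(z)\bigr|+\Im\,s_{y_p}(z)$
and handle each piece separately. For the first piece, integration by parts yields
\begin{align*}
\bigl|\Im\,\mathrm{E}s_p(z)-\Im\,s_{y_p}(z)\bigr|
=\left|\int\bigl(\mathrm{E}F^{\mathbf S_n}(x)-F_{y_p}(x)\bigr)\,\frac{2v(x-u)}{((x-u)^2+v^2)^2}\,dx\right|
\le \frac{C\Delta}{v},
\end{align*}
using $\|\mathrm{E}F^{\mathbf S_n}-F_{y_p}\|\le\Delta$ and a change of variable $y=(x-u)/v$ in the kernel. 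For the second piece, I would use the explicit formula for $s_{y_p}$ recorded just before Lemma \ref{bai1993} (in the proof of Theorem \ref{th:1}) to show that $\Im\,s_{y_p}(u+iv)\le Cv/v_{y_p}$, where $v_{y_p}=\sqrt{a_n}+\sqrt{v}$; this is the standard square-root edge behaviour of the M-P law and can be obtained by direct manipulation of $\Im\sqrt{(z-1-y_p)^2-4y_p}$.

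Putting the estimates together gives $\Im\,\mathrm{E}s_p(z)\le C(\Delta+v/v_{y_p})/v$, and substituting back produces
\begin{align*}
\mathrm{E\,tr}\bigl((\mathbf S_{nk}-u\mathbf I_{2p-2})^2+v^2\mathbf I_{2p-2}\bigr)^{-1}
\le \frac{2p}{v^2}\bigl(\Im\,\mathrm{E}s_p(z)\bigr)+\frac{C}{v^2}
\le \frac{Cn}{v^2}\bigl(\Delta+v/v_{y_p}\bigr),
\end{align*}
the remainder $C/v^2$ being absorbed into the main term since in the regime of interest $nv/v_{y_p}$ is large. The main technical obstacle, in my view, is the uniform bound $\Im\,s_{y_p}(z)\le Cv/v_{y_p}$ near the soft edge $a_n$: one must verify it on the whole rectangle $\{u\in[-A,A],\,v>n^{-1/2}\}$ and in particular at $u=a_n$ where the M-P density vanishes like $\sqrt{x-a_n}$, so that the correct edge scale $\sqrt{a_n}+\sqrt{v}$ appears in the denominator.
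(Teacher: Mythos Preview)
Your approach is essentially the paper's: rewrite the trace as $v^{-1}\Im\,\mathrm{tr}(\mathbf S_{nk}-z\mathbf I_{2p-2})^{-1}$, pass to $\mathbf S_n$ via interlacing (the paper invokes Lemma~\ref{lemma:7}), and then bound $\Im\,\mathrm{E}s_p(z)$ by splitting off $s_{y_p}(z)$. Where you do the integration by parts and the edge analysis by hand, the paper simply cites Lemma~\ref{B.22} for $|\mathrm{E}s_p(z)-s_{y_p}(z)|\le\pi\Delta/v$ and Lemma~\ref{le:8.17} for the Stieltjes transform of the M--P law.

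There is, however, one slip in your second piece: the claimed bound $\Im\,s_{y_p}(u+iv)\le Cv/v_{y_p}$ is false. For $u$ in the interior of the support one has $\Im\,s_{y_p}(u+iv)\to\pi f_{y_p}(u)>0$ as $v\to0$, whereas $v/v_{y_p}=v/(\sqrt{a_n}+\sqrt{v})\to0$; so no constant $C$ can work uniformly over $u\in[-A,A]$. What is true, and what the paper uses (Lemma~\ref{le:8.17}), is the weaker estimate $|s_{y_p}(z)|\le C/v_{y_p}$. Fortunately this is exactly what you need: writing $C/v_{y_p}=C(v/v_{y_p})/v$ you still obtain $\Im\,\mathrm{E}s_p(z)\le C(\Delta+v/v_{y_p})/v$, and the final bound follows unchanged. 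So the overall strategy is correct; just replace the overstated edge bound by the actual modulus bound on $s_{y_p}$.
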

\begin{proof}
By Lemma \ref{le:8.17} and Lemma \ref{B.22}, it follows that
\begin{align*}
&{\rm Etr}\left(\left(\mathbf S_{nk}-u\mathbf I_{2p-2}\right)^2+v^2\mathbf I_{2p-2}\right)^{-1}\\
=&\frac{1}{v}{\rm E}\Im{\rm tr}\left(\mathbf S_{nk}-z\mathbf I_{2p-2}\right)^{-1}\\
\le&\frac{1}{v}{\rm E}\Im{\rm tr}\left(\mathbf S_{n}-z\mathbf I_{2p}\right)^{-1}+\frac{1}{v^2}\\
\le&\frac{Cn}{v}\left(\left|{\rm E}s_p\left(z\right)-s_{y_p}\left(z\right)\right|+\left|s_{y_p}\left(z\right)\right|\right)+\frac{1}{v^2}\\
\le&\frac{Cn}{v}\left(\Delta/v+1/\left(\sqrt{y_p}v_{y_p}\right)\right)+\frac{1}{v^2}\\
\le&\frac{Cn}{v^2}\left(\Delta+v/v_{y_p}\right).
\end{align*}
\end{proof}

\begin{lemma}\label{le:8}
If $\left|z\right|<A$, $v>n^{-1/2}$, $\left|b_n\right|\le2/\sqrt{y_p\left|z\right|}$, and $l\ge1$, then
\begin{align*}
{\rm E}\left|s_p\left(z\right)-{\rm E}s_p\left(z\right)\right|^{2l}\le\frac{C}{n^{2l}v^{4l}y_p^{2l}}\left(\Delta+v/v_{y_p}\right)^l
\end{align*}
where $A$ is defined in Lemma \ref{bai1993}.
\end{lemma}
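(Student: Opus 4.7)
The plan is to prove the lemma by the standard martingale method along the columns of $\mathbf{X}_n$. Let $\mathbf{x}_k$ denote the $k$-th quaternion column of $\mathbf{X}_n$ (so that $\mathbf{x}_k\mathbf{x}_k^*$ is a rank-$2$ perturbation in the complex representation), set $\mathbf{S}_n^{(k)} = \mathbf{S}_n - \frac{1}{n}\mathbf{x}_k\mathbf{x}_k^*$ and $s_p^{(k)}(z) = \frac{1}{2p}\mathrm{tr}(\mathbf{S}_n^{(k)} - z\mathbf{I})^{-1}$, and let $\mathrm{E}_k$ denote the conditional expectation given $\mathbf{x}_1,\dots,\mathbf{x}_k$ with $\mathrm{E}_0 = \mathrm{E}$. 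Because $s_p^{(k)}$ is independent of $\mathbf{x}_k$, I obtain the martingale-difference decomposition $s_p - \mathrm{E}s_p = \sum_k \gamma_k$ with $\gamma_k = (\mathrm{E}_k - \mathrm{E}_{k-1})[s_p - s_p^{(k)}]$, and by Burkholder's inequality
\[
\mathrm{E}|s_p - \mathrm{E}s_p|^{2l} \le K_l\,\mathrm{E}\Bigl(\sum_k \mathrm{E}_{k-1}|\gamma_k|^2\Bigr)^l + K_l\sum_k \mathrm{E}|\gamma_k|^{2l}.
\]

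To control $|\gamma_k|$ I would apply the Sherman-Morrison-Woodbury identity for the perturbation $\mathbf{S}_n = \mathbf{S}_n^{(k)} + \frac{1}{n}\mathbf{x}_k\mathbf{x}_k^*$ and take cyclic trace to get
\[
s_p(z) - s_p^{(k)}(z) = -\frac{1}{2pn}\mathrm{tr}\bigl[\mathbf{G}_k^{-1}\mathbf{x}_k^*(\mathbf{S}_n^{(k)}-z\mathbf{I})^{-2}\mathbf{x}_k\bigr],
\]
where $\mathbf{G}_k = \mathbf{I}_2 + \frac{1}{n}\mathbf{x}_k^*(\mathbf{S}_n^{(k)}-z\mathbf{I})^{-1}\mathbf{x}_k$. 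The hypothesis $|b_n| \le 2/\sqrt{y_p|z|}$, together with concentration (Lemma B.26) of the scalar part of $\frac{1}{n}\mathbf{x}_k^*(\mathbf{S}_n^{(k)}-z)^{-1}\mathbf{x}_k$ around $-(zb_n)^{-1} - 1$ via the defining equation for $b_n$, guarantees that $\|\mathbf{G}_k^{-1}\|$ is of constant order uniformly in $k$. Cauchy-Schwarz on the trace and the quadratic-form moment inequality then reduce the $2m$-th moment of $\gamma_k$ to moments of $T_k := \mathrm{tr}\bigl((\mathbf{S}_n^{(k)}-u\mathbf{I})^2 + v^2\mathbf{I}\bigr)^{-1}$ via the majorizations $|\mathrm{tr}(\mathbf{S}_n^{(k)}-z)^{-2}| \le T_k$ and $\mathrm{tr}((\mathbf{S}_n^{(k)}-u)^2+v^2)^{-2} \le v^{-2}T_k$. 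The first moment $\mathrm{E}T_k$ is bounded by $Cnv^{-2}(\Delta + v/v_{y_p})$ by the analog of Lemma~\ref{le51} adapted from the row-deleted to the column-deleted setting (the proof goes through verbatim via rank-$2$ interlacing and the identity $T_k = 2pv^{-1}\Im s_p^{(k)}(z)$). Summing the resulting per-$k$ estimates and inserting into Burkholder would yield the target bound $Cn^{-2l}v^{-4l}y_p^{-2l}(\Delta + v/v_{y_p})^l$.

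The main obstacle will be obtaining the \emph{higher} moments of $T_k$ at the correct rate: combining the deterministic bound $T_k \le 2p/v^2$ with the first-moment bound from Lemma~\ref{le51} directly loses a factor of order $p$ per additional power. To close this gap one must either run an inductive bootstrap on $l$ — using the identity $T_k = 2pv^{-1}\Im s_p^{(k)}$ together with the case $l-1$ of the lemma applied to $s_p^{(k)}$, whose mean-square concentration controls $\mathrm{Var}(T_k)$ — or exploit the hypothesis $v > n^{-1/2}$ to show that $\mathrm{Var}(T_k)$ is dominated by $(\mathrm{E}T_k)^2$. A secondary technical point is the quaternion rank-$2$ nature of $\mathbf{x}_k\mathbf{x}_k^*$, which makes $\mathbf{G}_k$ a $2\times 2$ matrix whose inverse must be treated carefully; this parallels the handling of $\boldsymbol\varepsilon_k$ in the proof of Theorem~\ref{th:1} and introduces more notational than conceptual overhead.
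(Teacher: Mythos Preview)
Your martingale approach via \emph{column} deletion is a plausible route, but the paper takes the \emph{row} decomposition: $\mathrm{E}_k$ conditions on the first $k$ quaternion rows, and $\gamma_k=(\mathrm{E}_k-\mathrm{E}_{k-1})\sigma_k$ with $\sigma_k=\mathrm{tr}(\mathbf S_n-z\mathbf I_{2p})^{-1}-\mathrm{tr}(\mathbf S_{nk}-z\mathbf I_{2p-2})^{-1}$, so that everything is expressed through the quantities $b_n,\xi_k,\boldsymbol\varepsilon_k$ already introduced in Section~4. The crucial device is the resolvent expansion $\xi_k=b_n\mathbf I_2+b_n\xi_k\boldsymbol\varepsilon_k$, which splits $\gamma_k$ into a leading piece proportional to $b_n$ and a remainder $b_n\theta_k\sigma_k$; one then combines the deterministic bounds $|\sigma_k|\le 2/v$, $|b_n|\le 2/\sqrt{y_p|z|}$ with Lemma~\ref{B.26} moment bounds on $\theta_k$ to obtain the self-referential inequality
\[
\mathrm{E}|s_p-\mathrm{E}s_p|^{2l}\le \frac{C}{n^{2l}v^{3l}y_p^{l}}\Bigl[1+\mathrm{E}(\Im s_p)^{l}+n^{-l+1}v^{-l+1}\mathrm{E}\Im s_p\Bigr]+\frac{C}{n^{l}v^{2l}}\mathrm{E}|s_p-\mathrm{E}s_p|^{2l},
\]
the last term being absorbed into the left side. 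The induction you anticipate is then run as a doubling scheme: first $l=1$, then an auxiliary extension to $l\in(\tfrac12,1)$ (needed because the step at $l$ calls on the case $l/2$), then $l\in(2^{k-1},2^{k}]$; this is exactly how the higher powers of $\Im s_p$ (equivalently, of your $T_k$) are closed.

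Your column version has a genuine gap at the step ``$\|\mathbf G_k^{-1}\|$ is of constant order''. First, the concentration target is miscomputed: the scalar part of $\frac{1}{n}\mathbf x_k^*(\mathbf S_n^{(k)}-z)^{-1}\mathbf x_k$ concentrates around $y_p\mathrm{E}s_p$, so $\mathbf G_k\approx(1+y_p\mathrm{E}s_p)\mathbf I_2$, not $-(zb_n)^{-1}\mathbf I_2$. Second, the assumption $|b_n|\le 2/\sqrt{y_p|z|}$ is tailored to the \emph{row} denominator; via the identity $zb_n(1+y_p\mathrm{E}s_p)=1+(1-y_p)b_n$ it does not directly yield a uniform bound on $|1/(1+y_p\mathrm{E}s_p)|$. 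Third, even granting the right target, ``constant order'' is only a high-probability statement; if you merely Cauchy--Schwarz $\|\mathbf G_k^{-1}\|$ out you are stuck with the deterministic bound $\|\mathbf G_k^{-1}\|\le |z|/v$, which costs an extra $v^{-1}$ at every power and ruins the rate. The fix is exactly the expansion trick the paper uses on the row side: write $\mathbf G_k^{-1}=\beta\mathbf I_2-\beta\mathbf G_k^{-1}\eta_k$ with a bounded deterministic $\beta$ and pay the $|z|/v$ only against the small fluctuation $\eta_k$. Without this, your outlined reduction to moments of $T_k$ does not produce the recursive inequality at the required strength.
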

\begin{proof}
Write ${\rm E}_k\left(.\right)$ as the conditional expectation given $\left\{x_{lj};l\le k,j\le n\right\}$. Then
\begin{align*}
s_p\left(z\right)-{\rm E}s_p\left(z\right)=&\frac{1}{p}\sum_{k=1}^p\left[{\rm E}_k{\rm tr}\left(\mathbf S_n-z\mathbf I_p\right)^{-1}-{\rm E}_{k-1}{\rm tr}\left(\mathbf S_n-z\mathbf I_p\right)^{-1}\right]\\
=&\frac{1}{p}\sum_{k=1}^p\gamma_k,
\end{align*}
where
\begin{align*}
\gamma_k=&\left({\rm E}_k-{\rm E}_{k-1}\right)\left[{\rm tr}\left(\mathbf S_n-z\mathbf I_{2p}\right)^{-1}-{\rm tr}\left(\mathbf S_{nk}-z\mathbf I_{2p-2}\right)^{-1}\right]\\
=&-\left({\rm E}_k-{\rm E}_{k-1}\right){\rm tr}\left[\left(\frac{1}{n^2}\boldsymbol{\phi}_k^{\prime}{\mathbf X}_{nk}^*{{\left({\mathbf S_{nk}} - z{\mathbf I_{2p - 2}}\right)}^{-2}}{\mathbf X_{nk}}\bar{\boldsymbol{\phi}}_k+\mathbf I_2\right)\xi_k\right]\\
\triangleq&\left({\rm E}_k-{\rm E}_{k-1}\right)\sigma_k.
\end{align*}
By Lemma \ref{lemma:7}, we have $\left|\sigma_k\right|\le2/v$. Applying Lemma \ref{lemma:5}, one has
\begin{align}\label{al:2}
&{\rm E}\left|s_p\left(z\right)-{\rm E}s_p\left(z\right)\right|^{2l}\le C{n^{-2l}}\left\{{\rm E}\left(\sum_{k=1}^p{\rm E}_{k-1}\left|\gamma_k\right|^2\right)^l+\sum_{k=1}^p{\rm E}\left|\gamma_k\right|^{2l}\right\}.
\end{align}
Note that $\boldsymbol{\varepsilon_k}=\theta_k\mathbf I_2$ and $\xi_k=b_n\mathbf I_2+b_n\xi_k\boldsymbol{\varepsilon_k}$, then, $\gamma_k$ can be written as
\begin{align}\label{al:4}
\gamma_k=&-b_nn^{-2}\bigg({\rm E}_{k}{\rm tr}\boldsymbol{\phi}_k^{\prime}{\mathbf X}_{nk}^*{{\left({\mathbf S_{nk}} - z{\mathbf I_{2p - 2}}\right)}^{-2}}{\mathbf X_{nk}}\bar{\boldsymbol{\phi}}_k\notag\\
&-{\rm E}_{k-1}{\rm tr}{\mathbf X}_{nk}^*{{\left({\mathbf S_{nk}} - z{\mathbf I_{2p - 2}}\right)}^{-2}}{\mathbf X_{nk}}\bigg)
+\left({\rm E}_{k}-{\rm E}_{k-1}\right)b_n\theta_k\sigma_k.
\end{align}
Using Lemma \ref{B.26} and the condition $\left|b_n\right|\le2/\sqrt{y_p\left|z\right|}$, we have
\begin{align}\label{al:5}
&n^{-4}\left|b_n\right|^2{\rm E}_{k-1}\bigg|\bigg[{\rm tr}\boldsymbol{\phi}_k^{\prime}\mathbf X_{nk}^*\left(\mathbf S_{nk}-z\mathbf I_{2p-2}\right)^{-2}\mathbf X_{nk}\bar{\boldsymbol{\phi}}_k\notag\\
& \ \ \ \ \ \ \ \ \ \ \ \ -{\rm tr}\left(\mathbf X_{nk}^*\left(\mathbf S_{nk}-z\mathbf I_{2p-2}\right)^{-2}\mathbf X_{nk}\right)\bigg]\bigg|^2\notag\\
\le&\frac{C}{n^2}\left|b_n\right|^2{\rm E}_{k-1}{\rm tr}\left(\left(\mathbf S_{nk}-z\mathbf I_{2p-2}\right)^{-2}\mathbf S_{nk}\left(\mathbf S_{nk}-\overline z\mathbf I_{2p-2}\right)^{-2}\mathbf S_{nk}\right)\notag\\
\le&\frac{C}{n^2v^2y_p\left|z\right|}{\rm E}_{k-1}{\rm tr}\left(\left(\mathbf S_{nk}-z\mathbf I_{2p-2}\right)^{-1}\mathbf S_{nk}\left(\mathbf S_{nk}-\overline z\mathbf I_{2p-2}\right)^{-1}\mathbf S_{nk}\right)\notag\\
\le&\frac{C}{n^2v^2y_p\left|z\right|}{\rm E}_{k-1}\left[n+\left|z\right|^2{\rm tr}\left(\left(\mathbf S_{nk}-z\mathbf I_{2p-2}\right)^{-1}\left(\mathbf S_{nk}-\overline z\mathbf I_{2p-2}\right)^{-1}\right)\right]\notag\\
\le&\frac{C}{n^2v^2y_p\left|z\right|}{\rm E}_{k-1}\left[n+\frac{\left|z\right|^2}{v}\Im{\rm tr}\left(\mathbf S_{nk}-z\mathbf I_{2p-2}\right)^{-1}\right]\notag\\
\le&\frac{C}{n^2v^2y_p\left|z\right|}{\rm E}_{k-1}\left[n+\frac{\left|z\right|^2}{v^2}+\frac{n\left|z\right|^2}{v}\Im s_p\left(z\right)\right]\notag\\
\le&\frac{C}{nv^3}{\rm E}_{k-1}\left(1+\Im s_p\left(z\right)\right).
\end{align}
Recall that
\begin{align*}
{\rm tr}\varepsilon_k=&{\rm tr}\left(\frac{1}{n}\boldsymbol{\phi}_k^{\prime}\boldsymbol{\bar{\phi}}_k-\mathbf I_2+y_p\mathbf I_2+y_pz{\rm E}s_p\left(z\right)\mathbf I_2-\frac{1}{n^2}\boldsymbol{\phi}_k^{\prime}\mathbf X_{nk}^*\left(\mathbf S_{nk}-z\mathbf I_{2p-2}\right)^{-1}\mathbf X_{nk}\bar{\boldsymbol{\phi}}_k\right)\\
=&{\rm tr}\left(\frac{1}{n}\boldsymbol{\phi}_k^{\prime}\boldsymbol{\bar{\phi}}_k-\mathbf I_2\right)+\frac{2}{n}+\frac{z}{n}\left({\rm Etr}\left(\mathbf S_{n}-z\mathbf I_{2p}\right)^{-1}-{\rm tr}\left(\mathbf S_{nk}-z\mathbf I_{2p-2}\right)^{-1}\right)\\
&-\frac{1}{n^2}\left({\rm tr}\boldsymbol{\phi}_k^{\prime}\mathbf X_{nk}^*\left(\mathbf S_{nk}-z\mathbf I_{2p-2}\right)^{-1}\mathbf X_{nk}\bar{\boldsymbol{\phi}}_k-{\rm tr}\mathbf X_{nk}^*\left(\mathbf S_{nk}-z\mathbf I_{2p-2}\right)^{-1}\mathbf X_{nk}\right),
\end{align*}
then we obtain\begin{small}
\begin{align}\label{al:3}
&{\rm E}_{k-1}\left|\left({\rm E}_k-{\rm E}_{k-1}\right)b_n\theta_k\sigma_k\right|^2\notag\\
\le&\frac{1}{y_p\left|z\right|v^2}{\rm E}_{k-1}\left|{\rm tr}\boldsymbol\varepsilon_k\right|^2\notag\\
\le&\frac{C}{y_p\left|z\right|v^2}\bigg[{\rm E}\left|{\rm tr}\left(\frac{1}{n}\boldsymbol{\phi}_k^{\prime}\boldsymbol{\bar{\phi}}_k-\mathbf I_2\right)\right|^2+\frac{\left|z\right|^2}{n^2}{\rm E}_{k-1}\left|{\rm tr}\left(\mathbf S_{n}-z\mathbf I_{2p}\right)^{-1}-{\rm Etr}\left(\mathbf S_{n}-z\mathbf I_{2p}\right)^{-1}\right|^2\notag\\
&+\frac{\left|z\right|^2}{n^2}{\rm E}_{k-1}\left|{\rm tr}\left(\mathbf S_{n}-z\mathbf I_{2p}\right)^{-1}-{\rm tr}\left(\mathbf S_{nk}-z\mathbf I_{2p-2}\right)^{-1}\right|^2+\frac{1}{n^2}\notag\\
&+\frac{1}{n^4}{\rm E}_{k-1}\left|{\rm tr}\boldsymbol{\phi}_k^{\prime}\mathbf X_{nk}^*\left(\mathbf S_{nk}-z\mathbf I_{2p-2}\right)^{-1}\mathbf X_{nk}\bar{\boldsymbol{\phi}}_k-{\rm tr}\mathbf X_{nk}^*\left(\mathbf S_{nk}-z\mathbf I_{2p-2}\right)^{-1}\mathbf X_{nk}\right|^2\bigg]\notag\\
\le&\frac{C}{y_p\left|z\right|v^2}\bigg(\frac{1}{n}+y_p^2\left|z\right|^2{\rm E}_{k-1}\left|s_p\left(z\right)-{\rm E}s_p\left(z\right)\right|^2+\frac{\left|z\right|^2}{n^2v^2}\notag\\
&+\frac{1}{n^4}{\rm E}_{k-1}{\rm tr}\left(\mathbf X_{nk}^*\left(\mathbf S_{nk}-z\mathbf I_{2p-2}\right)^{-1}\mathbf X_{nk}\mathbf X_{nk}^*\left(\mathbf S_{nk}-\overline z\mathbf I_{2p-2}\right)^{-1}\mathbf X_{nk}\right)\bigg)\notag\\
\le&\frac{C}{y_p\left|z\right|v^2}\left(\frac{\left|z\right|^2}{n}+\frac{\left|u\right|^2}{n^2 v}{\rm E}_{k-1}\Im{\rm tr}\left(\mathbf S_{nk}-z\mathbf I_{2p-2}\right)^{-1}+y_p^2\left|z\right|^2{\rm E}_{k-1}\left|s_p\left(z\right)-{\rm E}s_p\left(z\right)\right|^2\right)\notag\\
\le&\frac{C}{y_p\left|z\right|v^2}\left(\frac{\left|z\right|^2}{n}+\frac{y_p\left|u\right|^2}{n v}{\rm E}_{k-1}\Im s_p\left(z\right)+y_p^2\left|z\right|^2{\rm E}_{k-1}\left|s_p\left(z\right)-{\rm E}s_p\left(z\right)\right|^2\right)\notag\\
\le&\frac{C}{nv^3}{\rm E}_{k-1}\left(1+\Im s_p\left(z\right)\right)+\frac{C}{v^2}{\rm E}_{k-1}\left|s_p\left(z\right)-{\rm E}s_p\left(z\right)\right|^2.
\end{align}
\end{small}
Combining (\ref{al:4}), (\ref{al:5}), and (\ref{al:3}), for large n,  the first term on the
right-hand side  of (\ref{al:2}) is dominated by
\begin{align*}
C{n^{-2l}}{\rm E}\left(\sum_{k=1}^p{\rm E}_{k-1}\left|\gamma_k\right|^2\right)^l\le \frac{C}{n^{2l}v^{3l}}{\rm E}\left(1+\Im s_p\left(z\right)\right)^l+\frac{C}{n^lv^{2l}}{\rm E}\left|s_p\left(z\right)-{\rm E}s_p\left(z\right)\right|^{2l}.
\end{align*}
Similarly, by (\ref{al:4}), one gets
\begin{align}
&n^{-4l}\left|b_n\right|^{2l}{\rm E}\bigg|\bigg[{\rm tr}\boldsymbol{\phi}_k^{\prime}\mathbf X_{nk}^*\left(\mathbf S_{nk}-z\mathbf I_{2p-2}\right)^{-2}\mathbf X_{nk}\bar{\boldsymbol{\phi}}_k\notag\\
& \ \ \ \ \ \ \ \ \ \ \ \ -{\rm tr}\left(\mathbf X_{nk}^*\left(\mathbf S_{nk}-z\mathbf I_{2p-2}\right)^{-2}\mathbf X_{nk}\right)\bigg]\bigg|^{2l}\notag\\
\le&\frac{C}{n^{2l}}\left|b_n\right|^{2l}{\rm E}\bigg\{\varphi_{4l}{\rm tr}\left(\left(\mathbf S_{nk}-z\mathbf I_{2p-2}\right)^{-2}\mathbf S_{nk}\left(\mathbf S_{nk}-\overline z\mathbf I_{2p-2}\right)^{-2}\mathbf S_{nk}\right)^l\notag\\
&+\left[\varphi_4{\rm tr}\left(\left(\mathbf S_{nk}-z\mathbf I_{2p-2}\right)^{-2}\mathbf S_{nk}\left(\mathbf S_{nk}-\overline z\mathbf I_{2p-2}\right)^{-2}\mathbf S_{nk}\right)\right]^l\bigg\}\notag\\
\le&\frac{C}{n^{2l}y_p^l\left|z\right|^l}{\rm E}\bigg\{\varphi_{4l}v^{-2l}{\rm tr}\left(\mathbf I_{2p-2}+\left|z\right|^{2l}\left(\mathbf S_{nk}-z\mathbf I_{2p-2}\right)^{-l}\left(\mathbf S_{nk}-\overline z\mathbf I_{2p-2}\right)^{-l}\right)\notag\\
&+\varphi_4^lv^{-2l}\left[n+\left|z\right|^2{\rm tr}\left(\left(\mathbf S_{nk}-z\mathbf I_{2p-2}\right)^{-1}\left(\mathbf S_{nk}-\overline z\mathbf I_{2p-2}\right)^{-1}\right)\right]^l\bigg\}\notag\\
\le&\frac{C}{n^{2l}y_p^l\left|z\right|^l}{\rm E}\bigg\{n^lv^{-2l}\left(1+\left|z\right|^{2l}v^{-2l+1}\Im s_p\left(z\right)+\left|z\right|^{2l}n^{-1}v^{-2l}\right)\notag\\
&+\varphi_4^ln^lv^{-2l}\left[1+\left|z\right|^2v^{-1}\Im s_p\left(z\right)+\left|z\right|^{2}n^{-1}v^{-2}\right]^l\bigg\}\notag\\
\le&\frac{C}{n^{l}y_p^lv^{3l}}{\rm E}\left[1+v^{-l+1}\Im s_p\left(z\right)+\left(\Im s_p\left(z\right)\right)^l+n^{-1}v^{-l}\right]\notag\\
\le&\frac{C}{n^{l}y_p^lv^{3l}}{\rm E}\left[1+v^{-l+1}\Im s_p\left(z\right)+n^{-1}v^{-l}\right]
\end{align}
where the third inequality follows from the fact that $\varphi_{4l}\le Cn^{l-1}$.
And
\begin{align}
&{\rm E}\left|\left({\rm E}_k-{\rm E}_{k-1}\right)b_n\theta_k\sigma_k\right|^{2l}\notag\\
\le&\frac{C}{y_p^l\left|z\right|^lv^{2l}}{\rm E}\left|{\rm tr}\boldsymbol\varepsilon_k\right|^{2l}\notag\\
\le&\frac{C}{y_p^l\left|z\right|^lv^{2l}}\bigg[{\rm E}\left|{\rm tr}\left(\frac{1}{n}\boldsymbol{\phi}_k^{\prime}\boldsymbol{\bar{\phi}}_k-\mathbf I_2\right)\right|^{2l}+\frac{\left|z\right|^{2l}}{n^{2l}}{\rm E}\left|{\rm tr}\left(\mathbf S_{n}-z\mathbf I_{2p}\right)^{-1}-{\rm Etr}\left(\mathbf S_{n}-z\mathbf I_{2p}\right)^{-1}\right|^{2l}\notag\\
&+\frac{\left|z\right|^{2l}}{n^{2l}}{\rm E}\left|{\rm tr}\left(\mathbf S_{n}-z\mathbf I_{2p}\right)^{-1}-{\rm tr}\left(\mathbf S_{nk}-z\mathbf I_{2p-2}\right)^{-1}\right|^{2l}+\frac{1}{n^{2l}}\notag\\
&+\frac{1}{n^{4l}}{\rm E}\left|{\rm tr}\boldsymbol{\phi}_k^{\prime}\mathbf X_{nk}^*\left(\mathbf S_{nk}-z\mathbf I_{2p-2}\right)^{-1}\mathbf X_{nk}\bar{\boldsymbol{\phi}}_k-{\rm tr}\mathbf X_{nk}^*\left(\mathbf S_{nk}-z\mathbf I_{2p-2}\right)^{-1}\mathbf X_{nk}\right|^{2l}\bigg]\notag\\
\le&\frac{C}{y_p^l\left|z\right|^lv^{2l}}\bigg\{\frac{1}{n^l}+y_p^{2l}\left|z\right|^{2l}{\rm E}\left|s_p\left(z\right)-{\rm E}s_p\left(z\right)\right|^{2l}+\frac{\left|z\right|^{2l}}{n^{2l}v^{2l}}\notag\\
&+\frac{1}{n^{2l}}{\rm E}\bigg(\varphi_{4l}{\rm tr}\left(\left(\mathbf S_{nk}-z\mathbf I_{2p-2}\right)^{-1}\mathbf S_{nk}\left(\mathbf S_{nk}-\overline z\mathbf I_{2p-2}\right)^{-1}\mathbf S_{nk}\right)^l\notag\\
&+\left[\varphi_4{\rm tr}\left(\left(\mathbf S_{nk}-z\mathbf I_{2p-2}\right)^{-1}\mathbf S_{nk}\left(\mathbf S_{nk}-\overline z\mathbf I_{2p-2}\right)^{-1}\mathbf S_{nk}\right)\right]^l\bigg)\bigg\}\notag\\
\le&\frac{C}{y_p^l\left|z\right|^lv^{2l}}\bigg\{\frac{\left|z\right|^{2l}}{n^l}+\frac{1}{n^{2l}}{\rm E}\bigg(\varphi_{4l}{\rm tr}\left(\mathbf I_{2p-2}+\left|z\right|^{2l}\left(\mathbf S_{nk}-z\mathbf I_{2p-2}\right)^{-l}\left(\mathbf S_{nk}-\bar{z}\mathbf I_{2p-2}\right)^{-l}\right)\notag\\
&+\varphi_4^l\left[n+\left|z\right|^2{\rm tr}\left(\left(\mathbf S_{nk}-z\mathbf I_{2p-2}\right)^{-1}\left(\mathbf S_{nk}-\overline z\mathbf I_{2p-2}\right)^{-1}\right)\right]^l\bigg)\notag\\
&+y_p^{2l}\left|z\right|^{2l}{\rm E}\left|s_p\left(z\right)-{\rm E}s_p\left(z\right)\right|^{2l}\bigg\}\notag\\
\le&\frac{C}{n^ly_p^l\left|z\right|^lv^{2l}}\bigg(\left|z\right|^{2l}+v^{-2l+1}\left|z\right|^{2l}{\rm E}\Im s_p\left(z\right)+v^{-l}\left|z\right|^{2l}{\rm E}\left(\Im s_p\left(z\right)\right)^l
+\frac{1}{nv^{2l}}\bigg)\notag\\
&+\frac{C}{v^{2l}}{\rm E}\left|s_p\left(z\right)-{\rm E}s_p\left(z\right)\right|^{2l}\notag\\
\le&\frac{C}{n^lv^{3l}y_p^{l}}{\rm E}\left(1+v^{-l+1}\Im s_p\left(z\right)+\frac{1}{nv^l}\right)+\frac{C}{v^{2l}}{\rm E}\left|s_p\left(z\right)-{\rm E}s_p\left(z\right)\right|^{2l}.
\end{align}
Therefore, together with the two inequalities above, the second term on the right hand side of (\ref{al:2}) is bounded by
\begin{align*}
C{n^{-2l}}\sum_{k=1}^p{\rm E}\left|\gamma_k\right|^{2l}\le&\frac{C}{n^{3l-1}v^{3l}y_p^{l}}{\rm E}\left(1+v^{-l+1}\Im s_p\left(z\right)+\frac{1}{nv^l}\right)\\
&+\frac{C}{n^{2l-1}v^{2l}}{\rm E}\left|s_p\left(z\right)-{\rm E}s_p\left(z\right)\right|^{2l}.
\end{align*}
Consequently, we obtain
\begin{align}\label{al:6}
{\rm E}\left|s_p\left(z\right)-{\rm E}s_p\left(z\right)\right|^{2l}\le&\frac{C}{n^{2l}v^{3l}y_p^l}\left[1+{\rm E}\left(1+\Im s_p\left(z\right)\right)^l
+n^{-l+1}v^{-l+1}{\rm E}\Im s_p\left(z\right)\right]\notag\\
&+\frac{C}{n^lv^{2l}}{\rm E}\left|s_p\left(z\right)-{\rm E}s_p\left(z\right)\right|^{2l}\notag\\
\le&\frac{C}{n^{2l}v^{3l}y_p^l}\left[1+{\rm E}\left(\Im s_p\left(z\right)\right)^l
+n^{-l+1}v^{-l+1}{\rm E}\Im s_p\left(z\right)\right].
\end{align}
Now, we shall complete the proof of the lemma by using induction on $l$ and the inequality (\ref{al:6}).
\begin{description}
  \item[step 1] When $l=1$, by Lemma \ref{le:8.17} and Lemma \ref{B.22}, one has
\begin{align*}
{\rm E}\left|s_p\left(z\right)-{\rm E}s_p\left(z\right)\right|^{2}
\le&\frac{C}{n^{2}v^{3}y_p}\left[1+{\rm E}\left(\Im s_p\left(z\right)\right)\right]\\
\le&\frac{C}{n^{2}v^{3}y_p}\left[1+\left|{\rm E} s_p\left(z\right)-s_{y_p}\left(z\right)\right|+\left|s_{y_p}\left(z\right)\right|\right]\\
\le&\frac{C}{n^{2}v^{3}y_p}\left[1+\Delta/v+1/\left(\sqrt {y_p}v_{y_p}\right)\right]\\
\le&\frac{C}{n^{2}v^{4}y_p^2}\left(\Delta+v/v_{y_p}\right).
\end{align*}
  \item[step 2] In the final step, we need the case $l\in\left(\frac{1}{2},1\right)$. Therefore, we shall extend the lemma to $l\in\left(\frac{1}{2},1\right)$. By Lemma \ref{2.12} and the first step, it follows that
\begin{align*}
{\rm E}\left|s_p\left(z\right)-{\rm E}s_p\left(z\right)\right|^{2l}\le&\frac{C}{n^{2l}}{\rm E}\left(\sum_{k=1}^p\left|\gamma_k\right|^2\right)^l\\
\le&\frac{C}{n^{2l}}\left(\sum_{k=1}^p{\rm E}\left|\gamma_k\right|^2\right)^l\\
\le&{C}\left({\rm E}\left|s_p\left(z\right)-{\rm E}s_p\left(z\right)\right|^{2}\right)^l\\
\le&\frac{C}{n^{2l}v^{4l}y_p^{2l}}\left(\Delta+v/v_{y_p}\right)^l.
\end{align*}
  \item[step 3] Suppose that, for $l\in \left(2^{t-1},2^t\right],t=0,1,\cdots,k-1$, the lemma is true. Then consider the case $l\in\left(2^{k-1},2^k\right]$. By (\ref{al:6}), we have
\begin{align*}
{\rm E}\left|s_p\left(z\right)-{\rm E}s_p\left(z\right)\right|^{2l}
\le&\frac{C}{n^{2l}v^{3l}y_p^l}\bigg[1+{\rm E}\left|s_p\left(z\right)-{\rm E} s_p\left(z\right)\right|^l+\left|{\rm E} s_p\left(z\right)\right|^l\\
&
+n^{-l+1}v^{-l+1}\left|{\rm E} s_p\left(z\right)\right|\bigg]\\
\le&\frac{C}{n^{2l}v^{3l}y_p^l}\bigg[1+\frac{\left(\Delta+v/v_{y_p}\right)^{l/2}}{n^{l}v^{2l}y_p^{l}}+\left|\Delta/v+1/\left(\sqrt {y_p}v_{y_p}\right)\right|^l\\
&
+n^{-l+1}v^{-l+1}\left|\Delta/v+1/\left(\sqrt {y_p}v_{y_p}\right)\right|\bigg]\\
\le&\frac{C}{n^{2l}v^{4l}y_p^{2l}}\left(\Delta+v/v_{y_p}\right)^l.
\end{align*}
\end{description}
Then, the proof of the lemma is complete.
\end{proof}
\begin{lemma}\label{ex}
Under the conditions of Theorem \ref{th:1} and the additional assumption $\left\|x_{jk}\right\|\le n^{-1/4}$, for any fixed $t>0$, we have
\begin{align*}
\int_B^{\infty}\left|{\rm E}F_p\left(x\right)-F_{y_p}\left(x\right)\right|dx=o\left(n^{-t}\right),
\end{align*}
where $B=b_n+1=\sqrt{y_p}+2$.
\end{lemma}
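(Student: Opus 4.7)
The plan is to reduce the tail integral to a tail estimate for $\lambda_{\max}(\mathbf{S}_n)$ and then close with a high-moment Markov bound. Since $B$ is chosen to lie above the upper edge of the support of $F_{y_p}$, one has $F_{y_p}(x) = 1$ for $x > B$, so
\[
|{\rm E}F_p(x) - F_{y_p}(x)| = {\rm E}[1 - F_p(x)] = \frac{1}{p}\sum_{j=1}^p {\rm P}(s_j > x) \le {\rm P}(\lambda_{\max}(\mathbf S_n) > x).
\]
By Fubini,
\[
\int_B^\infty |{\rm E}F_p(x) - F_{y_p}(x)|\, dx \le \int_B^\infty {\rm P}(\lambda_{\max}(\mathbf S_n) > x)\, dx = {\rm E}\bigl[(\lambda_{\max}(\mathbf S_n) - B)_+\bigr].
\]

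Next, for any positive integer $l$ I would bound this excess by
\[
{\rm E}\bigl[\lambda_{\max}(\mathbf S_n)\, I(\lambda_{\max}(\mathbf S_n) > B)\bigr] \le B^{-l}\, {\rm E}\,\lambda_{\max}(\mathbf S_n)^{l+1} \le B^{-l}\, {\rm E}\,{\rm tr}(\mathbf S_n^{l+1}),
\]
reducing matters to a trace-moment estimate. Under the truncation $\|x_{jk}\| \le n^{1/4}$ and the sixth moment hypothesis, the standard Bai--Yin combinatorial moment method -- adapted to the quaternion setting via the $2n \times 2n$ complex realization $\psi(\mathbf X_n)$, as carried out by the authors in \cite{li2013extreme} to pin down the limit of the largest eigenvalue -- yields an estimate of the form
\[
{\rm E}\,{\rm tr}(\mathbf S_n^l) \le p\, (1 + \sqrt{y_p})^{2l}\, l^{K},
\]
uniformly in $l$ up to some positive power of $n$. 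I would invoke this bound rather than re-derive the graph-counting argument.

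Combining the two displays, writing $b = (1+\sqrt{y_p})^2$ (so $b \le 4$ and $b/B \le \rho < 1$ uniformly in $p, n$, hence $(b/B)^l \le e^{-cl}$ for some absolute $c>0$), we obtain
\[
\int_B^\infty |{\rm E}F_p(x) - F_{y_p}(x)|\, dx \le p\cdot b \cdot (l+1)^{K} (b/B)^{l}.
\]
For any fixed $t > 0$, choosing $l = \lceil C_t \log n\rceil$ with $C_t$ large enough makes the right-hand side $O\!\bigl(n\cdot (\log n)^{K} e^{-c C_t \log n}\bigr) = o(n^{-t})$, which is the claim.

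The main obstacle, and essentially the only non-routine ingredient, is the trace-moment bound ${\rm E}\,{\rm tr}(\mathbf S_n^l) \le p\, b^{l}\, l^{K}$ holding uniformly over $l$ up to a polynomial rate in $n$. Its proof is combinatorial graph-counting with canonical-path accounting; in the quaternion setting the non-commutativity of the entries forces one to work with $\psi(\mathbf X_n)$ and track the additional $2\times 2$-block structure, but the sixth moment assumption together with the truncation at $n^{1/4}$ controls the non-leading contributions. Since only $l = O(\log n)$ is needed here, any reasonable polynomial range (e.g.\ $l \le n^{1/10}$) in the moment bound is more than sufficient.
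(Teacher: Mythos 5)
Your proposal is correct and follows essentially the same route as the paper: both reduce the tail integral to a tail estimate on $\lambda_{\max}(\mathbf S_n)$ via $|{\rm E}F_p(x) - F_{y_p}(x)| = {\rm E}[1-F_p(x)] \le {\rm P}(\lambda_{\max}(\mathbf S_n)\ge x)$ for $x > B$, and then close with the high-moment bound on $\lambda_{\max}(\mathbf S_n)$ established in \cite{li2013extreme} (the paper quotes it directly as ${\rm E}\,\lambda_{\max}(\mathbf S_n)^m \le (b+\xi)^m$ for $m=[\log n]$ and integrates $\int_B^\infty((b+\xi)/x)^m\,dx$; you pass through ${\rm E}{\rm tr}(\mathbf S_n^{l+1})$ instead, which is the same ingredient in a slightly different guise). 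Your version is, if anything, a bit more explicit about the $t$-dependence by taking $l=\lceil C_t\log n\rceil$ with $C_t$ growing in $t$, whereas the paper fixes $m=[\log n]$ and leaves the choice of a larger constant implicit.
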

\begin{proof}
In \cite{li2013extreme}, it has proved that, for any $\xi>0$ and $m=\left[\log n\right]$,
\begin{align*}
{\rm E}\left(\lambda_{\max}\left(\mathbf S_n\right)\right)^m\le\left(b+\xi\right)^m.
\end{align*}
Note that
\begin{align*}
1-F_p\left(x\right)\le I\left(\lambda_{\max}\left(\mathbf S_n\right)\ge x\right), \ for \ x\ge 0.
\end{align*}
Then, it follows that
\begin{align*}
&\int_B^{\infty}\left|{\rm E}F_p\left(x\right)-F_{y_p}\left(x\right)\right|dx
\le\int_B^{\infty}{\rm P}\left(\lambda_{\max}\left(\mathbf S_n\right)\ge x\right)\\
\le&\int_B^{\infty}\left(\frac{b+\xi}{x}\right)^mdx=O\left(\left(\frac{b+\xi}{B}\right)^{m-1}\right)\\
=&o\left(n^{-t}\right)
\end{align*}
which completes the proof of this lemma.
\end{proof}

\section{Appendix}
In this section, to be self-contained, we shall present some existing results which will be used in the proof of the main theorems.
\begin{lemma}[Theorem A.44 in\cite{bai2010spectral}]\label{le:1}
Let $\mathbf A$ and $\mathbf B$ be two $m\times k$ complex matrices. Then,
\begin{align*}
\left\|F^{\mathbf{AA^*}}-F^{\mathbf{BB^*}}\right\|\le\frac{1}{m}{\rm rank}\left(\mathbf A-\mathbf B\right)
\end{align*}
where $\left\|g\right\|=\sup_x\left|g\left(x\right)\right|$.
\end{lemma}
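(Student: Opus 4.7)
The plan is to derive this rank inequality through the interlacing of singular values. Observe that $\mathbf{AA^*}$ and $\mathbf{BB^*}$ are $m\times m$ nonnegative Hermitian matrices whose ordered eigenvalues are exactly the squared singular values $\sigma_i^2(\mathbf A)$ and $\sigma_i^2(\mathbf B)$, so the ESDs $F^{\mathbf{AA^*}}$ and $F^{\mathbf{BB^*}}$ are simply the empirical distributions of these squared singular values padded with zeros if $k<m$.

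The key step is a rank-perturbation bound for singular values: with $r=\mathrm{rank}(\mathbf A-\mathbf B)$, I would establish
$$\sigma_{i+r}(\mathbf B)\;\le\;\sigma_i(\mathbf A)\;\le\;\sigma_{i-r}(\mathbf B)$$
whenever the indices lie in the admissible range. This follows from the Courant--Fischer min-max representation $\sigma_i(\mathbf A)=\min_{\dim V=k-i+1}\max_{v\in V,\|v\|=1}\|\mathbf Av\|$ applied to the decomposition $\mathbf A=\mathbf B+(\mathbf A-\mathbf B)$: because $\mathbf A-\mathbf B$ has kernel of codimension $r$, restricting the extremal subspaces to $\ker(\mathbf A-\mathbf B)$ changes the relevant dimensions by at most $r$, on which $\mathbf A v=\mathbf Bv$. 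Squaring preserves the ordering for nonnegative quantities, so the same shift-by-$r$ interlacing holds for the eigenvalues of $\mathbf{AA^*}$ and $\mathbf{BB^*}$.

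To convert the interlacing into the claimed uniform bound, fix $x\ge 0$ and let $N_A(x)$, $N_B(x)$ denote the numbers of eigenvalues of $\mathbf{AA^*}$ and $\mathbf{BB^*}$ not exceeding $x$. If $\mathbf{AA^*}$ has $N_A(x)$ eigenvalues in $[0,x]$, the interlacing forces the $(N_A(x)+r)$-th largest eigenvalue of $\mathbf{BB^*}$ to also lie in $[0,x]$, giving $N_B(x)\ge N_A(x)-r$; the reverse inequality is symmetric. Thus $|N_A(x)-N_B(x)|\le r$, and dividing by $m$ and taking a supremum in $x$ yields the lemma.

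The only substantive obstacle is the singular-value interlacing itself; once that is in hand the counting argument is immediate. An alternative route would be to note that $\mathbf{AA^*}-\mathbf{BB^*}=\mathbf A(\mathbf A-\mathbf B)^*+(\mathbf A-\mathbf B)\mathbf B^*$ has rank at most $2r$ and apply the Cauchy/Weyl interlacing for Hermitian matrices directly, but that route delivers only the weaker bound $2r/m$; the singular-value perturbation is needed to obtain the sharp constant $1/m$.
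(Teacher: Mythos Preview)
Your argument is correct, and the singular-value interlacing you invoke is exactly the standard route to this inequality. Note, however, that the paper does not supply its own proof of this lemma: it is quoted in the Appendix as Theorem~A.44 of \cite{bai2010spectral} and used as a black box. The proof in that reference proceeds precisely as you outline---first establishing the shift-by-$r$ interlacing $\sigma_{i+r}(\mathbf B)\le\sigma_i(\mathbf A)\le\sigma_{i-r}(\mathbf B)$ from the min--max characterisation, then reading off the uniform bound on the counting functions. Your observation that the cruder decomposition $\mathbf{AA^*}-\mathbf{BB^*}=\mathbf A(\mathbf A-\mathbf B)^*+(\mathbf A-\mathbf B)\mathbf B^*$ only yields $2r/m$ is also accurate and worth keeping in mind.

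One small wording issue: in your counting paragraph you write ``the $(N_A(x)+r)$-th largest eigenvalue of $\mathbf{BB^*}$ \ldots\ giving $N_B(x)\ge N_A(x)-r$.'' The conclusion is right but the intermediate index is off; the clean way to say it is that $\sigma_{i}^2(\mathbf A)\le x$ for $i\ge m-N_A(x)+1$, whence $\sigma_{i+r}^2(\mathbf B)\le x$ for those same $i$, so at least $N_A(x)-r$ of the $\sigma_j^2(\mathbf B)$ lie in $[0,x]$.
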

\begin{lemma}[Bernstein's inequality]\label{le:2}
If $\mathbf Y_1,\cdots,\mathbf Y_k$ are independent random variables with mean zeros and uniformly bounded by $K$, then, for any $\varepsilon>0$,
\begin{align*}
{\rm P}\left(\left|\sum_j\mathbf Y_j\right|\ge\varepsilon\right)\le2\exp\left\{-\varepsilon^2/\left[2\left(B_n^2+K\varepsilon\right)\right]\right\}
\end{align*}
where $B_n^2={\rm Var}\left(\sum_j\mathbf Y_j\right)$.
\end{lemma}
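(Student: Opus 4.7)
The plan is to establish Bernstein's inequality via the standard exponential Chernoff argument, applied separately to the upper and lower tails and then combined by a union bound. The only real content is a careful bound on the moment generating function (MGF) of each $\mathbf{Y}_j$ that exploits both the variance and the almost-sure bound $K$.

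First, for any $t>0$ with $tK$ not too large, I would bound the MGF of a single centered variable. Since $\mathrm{E}\mathbf{Y}_j=0$ and $|\mathbf{Y}_j|\le K$, Taylor-expanding $e^{t\mathbf{Y}_j}$ and using $\mathrm{E}|\mathbf{Y}_j|^k\le K^{k-2}\mathrm{E}\mathbf{Y}_j^2$ for $k\ge 2$ gives
\begin{equation*}
\mathrm{E}e^{t\mathbf{Y}_j}\;\le\;1+\sum_{k\ge 2}\frac{t^k K^{k-2}}{k!}\mathrm{E}\mathbf{Y}_j^2
\;\le\;1+\frac{t^2\mathrm{E}\mathbf{Y}_j^2}{2(1-tK/3)}
\;\le\;\exp\!\left(\frac{t^2\mathrm{E}\mathbf{Y}_j^2}{2(1-tK/3)}\right),
\end{equation*}
valid for $0<tK<3$. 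Here I used the geometric-series bound $k!\ge 2\cdot 3^{k-2}$ and $1+x\le e^x$.

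Next I would combine across the independent $\mathbf{Y}_j$'s: multiplying the one-variable MGF bounds and writing $B_n^2=\sum_j\mathrm{E}\mathbf{Y}_j^2$, Markov's inequality yields
\begin{equation*}
\mathrm{P}\!\left(\sum_j\mathbf{Y}_j\ge\varepsilon\right)\;\le\; e^{-t\varepsilon}\prod_j \mathrm{E}e^{t\mathbf{Y}_j}\;\le\;\exp\!\left(-t\varepsilon+\frac{t^2 B_n^2}{2(1-tK/3)}\right).
\end{equation*}
I would then choose $t=\varepsilon/(B_n^2+K\varepsilon/3)$, which is the standard optimizer (up to constants); substituting and simplifying the denominator $1-tK/3$ gives the one-sided bound $\exp\{-\varepsilon^2/[2(B_n^2+K\varepsilon/3)]\}$, which is dominated by $\exp\{-\varepsilon^2/[2(B_n^2+K\varepsilon)]\}$ in the form stated.

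Finally, applying the same argument to $-\mathbf{Y}_j$ (which are also independent, mean zero, and bounded by $K$) and adding the two one-sided probabilities produces the factor of $2$ in the lemma. The only mildly delicate step is the MGF bound in the first paragraph, specifically the moment estimate $\mathrm{E}|\mathbf{Y}_j|^k\le K^{k-2}\mathrm{E}\mathbf{Y}_j^2$ and the factorial lower bound $k!\ge 2\cdot 3^{k-2}$, which together are what convert the boundedness-plus-variance hypothesis into the mixed $B_n^2+K\varepsilon$ denominator that distinguishes Bernstein's inequality from the cruder Hoeffding bound; everything else is routine optimization.
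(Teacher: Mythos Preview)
Your argument is correct and is the standard Chernoff-method proof of Bernstein's inequality; the MGF bound, the choice $t=\varepsilon/(B_n^2+K\varepsilon/3)$, and the union bound all go through as you describe. Note, however, that the paper does not actually prove this lemma: it is simply stated in the appendix as a classical result and invoked as a black box, so there is no ``paper's own proof'' to compare against. Your write-up therefore supplies strictly more than the paper does here.
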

\begin{lemma}[Theorem A.45 in \cite{bai2010spectral}]\label{le:3}
Let $\mathbf A$ and $\mathbf B$ be two $m\times m$ Hermitian matrices. Then,
\begin{align*}
L\left(F^{\mathbf A},F^{\mathbf B}\right)\le\left\|\mathbf A-\mathbf B\right\|_2
\end{align*}
where $L$ is the Levy distance between two two-dimensional distribution functions $F$ and $G$ defined by
\begin{align*}
L\left(F,G\right)=\inf\left\{\varepsilon:F\left(\xi-\varepsilon,\eta-\varepsilon\right)-\varepsilon\le G\left(\xi,\eta\right)\le F\left(\xi+\varepsilon,\eta+\varepsilon\right)+\varepsilon\right\}.
\end{align*}
\end{lemma}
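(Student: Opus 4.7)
The plan is to reduce the bound on the Lévy distance to Weyl's classical perturbation inequality for eigenvalues of Hermitian matrices. Write $\eta := \left\|\mathbf A - \mathbf B\right\|_2$ for the spectral norm of the (Hermitian) difference, and arrange the eigenvalues in non-increasing order as $\lambda_1(\mathbf A) \ge \cdots \ge \lambda_m(\mathbf A)$ and $\lambda_1(\mathbf B) \ge \cdots \ge \lambda_m(\mathbf B)$. The first step is to establish the Weyl bound
\[
\max_{1 \le i \le m} \bigl|\lambda_i(\mathbf A) - \lambda_i(\mathbf B)\bigr| \le \eta,
\]
which is immediate from the Courant--Fischer min--max characterization combined with the uniform estimate $|x^*(\mathbf A - \mathbf B)x| \le \eta$ for unit vectors $x$.

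Once the eigenvalue-wise bound is in hand, I would translate it into an inequality between the counting functions. Since shifting every $\lambda_i(\mathbf B)$ by at most $\eta$ can change, for any given threshold $x$, the number of eigenvalues that fall below $x$ only by admitting (or excluding) those of $\mathbf A$ within $\eta$ of $x$, we obtain
\[
F^{\mathbf A}(x - \eta) \le F^{\mathbf B}(x) \le F^{\mathbf A}(x + \eta) \qquad \text{for every } x \in \mathbb{R},
\]
and symmetrically with the roles of $\mathbf A$ and $\mathbf B$ interchanged.

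The final step is simply to plug this two-sided inequality into the definition of the Lévy distance stated in the lemma (specialized to the one-dimensional case, where the pair of parameters $(\xi,\eta)$ collapses to a single real argument). The value $\varepsilon = \eta$ is then a legitimate choice in the infimum, so $L(F^{\mathbf A}, F^{\mathbf B}) \le \eta = \left\|\mathbf A - \mathbf B\right\|_2$, completing the proof. The only step with genuine mathematical content is the Weyl bound; the rest is a transparent accounting of eigenvalues, so I do not anticipate any serious obstacle. A minor care point is to make sure the Lévy-distance definition used in the lemma (phrased in two dimensions) is applied correctly to the one-dimensional marginals $F^{\mathbf A}, F^{\mathbf B}$, which amounts to taking both slack parameters equal to $\eta$.
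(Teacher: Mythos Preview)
The paper does not supply its own proof of this lemma; it is listed in the Appendix among ``some existing results'' and simply cited as Theorem~A.45 of Bai and Silverstein. Your proposed argument via Weyl's perturbation inequality followed by a direct reading of the L\'evy-distance definition is correct and is in fact the standard proof of this result (and essentially the one given in the cited reference), so there is nothing further to compare.
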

\begin{lemma}[Theorem A.47 in \cite{bai2010spectral}]\label{le:4}
Let $\mathbf A$ and $\mathbf B$ be two $m\times k$ complex matrices. Then,
\begin{align*}
L\left(F^{\mathbf{AA^*}},F^{\mathbf{BB^*}}\right)\le2\left\|\mathbf A\right\|_2\left\|\mathbf {A-B}\right\|_2+\left\|\mathbf {A-B}\right\|_2^2.
\end{align*}
\end{lemma}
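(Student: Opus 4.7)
The plan is to reduce the Lévy-distance bound to a spectral-norm bound on the difference of the two Hermitian matrices $\mathbf{AA}^*$ and $\mathbf{BB}^*$, and then estimate that spectral norm by a telescoping identity. Specifically, since $\mathbf{AA}^*$ and $\mathbf{BB}^*$ are both $m\times m$ Hermitian matrices, Lemma~\ref{le:3} applies directly and yields
\begin{align*}
L\!\left(F^{\mathbf{AA}^*},F^{\mathbf{BB}^*}\right)\le \left\|\mathbf{AA}^*-\mathbf{BB}^*\right\|_2.
\end{align*}
So the whole task is to bound $\|\mathbf{AA}^*-\mathbf{BB}^*\|_2$ by $2\|\mathbf A\|_2\|\mathbf{A-B}\|_2+\|\mathbf{A-B}\|_2^2$.

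For this, I would insert $\pm\,\mathbf{AB}^*$ (or equivalently $\pm\,\mathbf{BA}^*$) to telescope the difference as
\begin{align*}
\mathbf{AA}^*-\mathbf{BB}^*=\mathbf{A}(\mathbf{A}-\mathbf{B})^*+(\mathbf{A}-\mathbf{B})\mathbf{B}^*.
\end{align*}
Then by the triangle inequality and submultiplicativity of the spectral norm (together with $\|\mathbf{C}^*\|_2=\|\mathbf{C}\|_2$), I get
\begin{align*}
\|\mathbf{AA}^*-\mathbf{BB}^*\|_2\le \|\mathbf A\|_2\|\mathbf{A}-\mathbf{B}\|_2+\|\mathbf{A}-\mathbf{B}\|_2\|\mathbf B\|_2.
\end{align*}

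The final step is to express $\|\mathbf B\|_2$ in terms of $\|\mathbf A\|_2$ and $\|\mathbf{A-B}\|_2$ using the triangle inequality, $\|\mathbf B\|_2\le \|\mathbf A\|_2+\|\mathbf{A-B}\|_2$. Substituting this into the previous display gives the advertised bound
\begin{align*}
\|\mathbf{AA}^*-\mathbf{BB}^*\|_2\le 2\|\mathbf A\|_2\|\mathbf{A-B}\|_2+\|\mathbf{A-B}\|_2^2,
\end{align*}
and combining with the Lévy-distance step from Lemma~\ref{le:3} completes the proof. There is essentially no hard step here; the only thing to be careful about is choosing the right telescoping (adding and subtracting $\mathbf{AB}^*$ rather than $\mathbf{BA}^*$ is symmetric, but one must be consistent so that the two resulting factors can each be controlled by $\|\mathbf A\|_2$ or $\|\mathbf B\|_2$ times $\|\mathbf{A-B}\|_2$), and then converting $\|\mathbf B\|_2$ to $\|\mathbf A\|_2$ to obtain a bound that is symmetric in $\mathbf A$ only, as required by the statement.
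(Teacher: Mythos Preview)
Your proof is correct. The paper does not actually prove this lemma: it is listed in the Appendix as a cited result (Theorem~A.47 in \cite{bai2010spectral}) with no accompanying argument, so there is nothing to compare against beyond noting that your reduction via Lemma~\ref{le:3} and the telescoping identity $\mathbf{AA}^*-\mathbf{BB}^*=\mathbf{A}(\mathbf{A}-\mathbf{B})^*+(\mathbf{A}-\mathbf{B})\mathbf{B}^*$ is exactly the standard route to this bound.
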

\begin{lemma}[Lemma B.19 in \cite{bai2010spectral}]\label{le:5}
Let $F_1$, $F_2$ be distribution functions and let $G$ satisfy $\sup_{x}\left|G\left(x+\theta\right)-G\left(x\right)\right|\le g\left(\theta\right)$, for all $\theta$, where $g$ is an increasing and continuous function such that $g\left(0\right)=0$. Then
\begin{align*}
\left\|F_1-G\right\|_2\le3\max\left\{\left\|F_2-G\right\|_2,L\left(F_1,F_2\right),g\left(L\left(F_1,F_2\right)\right)\right\}.
\end{align*}
\end{lemma}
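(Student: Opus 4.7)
The plan is to exploit the one-line definition of the Levy distance together with the modulus-of-continuity hypothesis on $G$, so as to transfer a closeness-in-Levy bound between $F_1$ and $F_2$ into a closeness-in-sup-norm bound between $F_1$ and $G$. I read $\|\cdot\|_2$ in the conclusion as the Kolmogorov sup norm, consistent with its use earlier in the Conclusion subsection and with the companion lemma \ref{le:3}.

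First I would recall that, by the very definition of $L(F_1,F_2)$, for any $\varepsilon>L(F_1,F_2)$ and every $x$ one has the two-sided sandwich $F_2(x-\varepsilon)-\varepsilon\le F_1(x)\le F_2(x+\varepsilon)+\varepsilon$. Next I would apply the sup-norm triangle inequality to replace $F_2(x\pm\varepsilon)$ by $G(x\pm\varepsilon)\pm\|F_2-G\|_2$, and then invoke the modulus-of-continuity assumption $|G(x+\theta)-G(x)|\le g(|\theta|)$ to replace $G(x\pm\varepsilon)$ by $G(x)\pm g(\varepsilon)$. Chaining these three estimates yields, uniformly in $x$,
\[
|F_1(x)-G(x)|\le\varepsilon+\|F_2-G\|_2+g(\varepsilon).
\]

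Finally I would let $\varepsilon\downarrow L(F_1,F_2)$ and use the continuity of $g$ to conclude $\|F_1-G\|_2\le L(F_1,F_2)+\|F_2-G\|_2+g(L(F_1,F_2))$. Since each of the three summands on the right is bounded by the maximum of the three, the factor $3$ announced in the statement drops out.

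The argument is really bookkeeping; the only delicate point is the $\varepsilon$-limit, which requires the continuity (and, implicitly, the monotonicity) of $g$ to justify passing from $g(\varepsilon)$ to $g(L(F_1,F_2))$ in the limit. Both properties are built into the hypotheses, so this step is painless and I do not anticipate a genuine obstacle in executing the plan.
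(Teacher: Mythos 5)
The paper merely cites this as Lemma B.19 from Bai--Silverstein and gives no proof, so there is no in-paper argument to compare against; your proof is correct and is exactly the standard one. The sandwich $F_2(x-\varepsilon)-\varepsilon\le F_1(x)\le F_2(x+\varepsilon)+\varepsilon$ for $\varepsilon>L(F_1,F_2)$, followed by the triangle-inequality swap $F_2\mapsto G$ and the modulus-of-continuity bound $|G(x\pm\varepsilon)-G(x)|\le g(\varepsilon)$, yields the sum bound $\|F_1-G\|\le\varepsilon+\|F_2-G\|+g(\varepsilon)$; letting $\varepsilon\downarrow L(F_1,F_2)$ and bounding the sum by three times the maximum gives the statement (in fact the slightly stronger additive form).
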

\begin{remark}[Lemma 8.14 in \cite{bai2010spectral}]\label{re:1}
For the M-P law with index $y\le 1$, the function g can be taken as $g\left(v\right)=2v/\left(y\left(\sqrt a+\sqrt v\right)\right)$.
\end{remark}
\begin{lemma}[Inversion formula for block matrix ]\label{le:6}
Suppose that the matrix $\mathbf \Sigma$ is nonsingular and has the partition as given by $\left(\begin{array}{cc}\Sigma_{11}&\Sigma_{12}\\ \Sigma_{21}&\Sigma_{22}\end{array}\right)$. If $\Sigma_{11}$ is also singular, then, the inverse of $\mathbf \Sigma$ has the from
\begin{align*}
\mathbf \Sigma^{-1}=\left(\begin{array}{cc}
\Sigma_{11}^{-1}+\Sigma_{11}^{-1}\Sigma_{12}\Sigma_{22.1}^{-1}\Sigma_{21}\Sigma_{11}^{-1}&-\Sigma_{11}^{-1}\Sigma_{12}\Sigma_{22.1}^{-1}\\
-\Sigma_{22.1}^{-1}\Sigma_{21}\Sigma_{11}^{-1}&\Sigma_{22.1}^{-1}\end{array}\right)
\end{align*}
where $\Sigma_{22.1}^{-1}=\Sigma_{22}-\Sigma_{21}\Sigma_{11}^{-1}\Sigma_{12}$.
\end{lemma}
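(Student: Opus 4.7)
The statement is the classical Schur-complement block inversion formula, so the plan is to reduce it to straightforward linear algebra rather than invoke anything deeper. My preferred route is the block LDU factorization, which makes both the existence of $\Sigma_{22.1}^{-1}$ and the final formula transparent, rather than guessing the inverse and verifying by multiplication (which also works but feels unmotivated).

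Concretely, the first step is to observe that, provided $\Sigma_{11}$ is invertible, one can factor
\begin{align*}
\mathbf{\Sigma}=
\begin{pmatrix} I & 0 \\ \Sigma_{21}\Sigma_{11}^{-1} & I \end{pmatrix}
\begin{pmatrix} \Sigma_{11} & 0 \\ 0 & \Sigma_{22.1} \end{pmatrix}
\begin{pmatrix} I & \Sigma_{11}^{-1}\Sigma_{12} \\ 0 & I \end{pmatrix},
\end{align*}
where $\Sigma_{22.1}=\Sigma_{22}-\Sigma_{21}\Sigma_{11}^{-1}\Sigma_{12}$. This identity is verified by direct block multiplication, which is routine. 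Taking determinants (or noting that the two unit-triangular factors are automatically invertible) shows that the nonsingularity of $\mathbf{\Sigma}$ together with the invertibility of $\Sigma_{11}$ forces $\Sigma_{22.1}$ to be invertible as well, which is exactly what is needed to make sense of the right-hand side of the claim.

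The second step is to invert each of the three factors individually. The two unit-triangular factors are inverted simply by flipping the sign of the off-diagonal block, and the middle diagonal factor is inverted blockwise to $\mathrm{diag}(\Sigma_{11}^{-1},\Sigma_{22.1}^{-1})$. Multiplying the three inverses in reverse order,
\begin{align*}
\mathbf{\Sigma}^{-1}=
\begin{pmatrix} I & -\Sigma_{11}^{-1}\Sigma_{12} \\ 0 & I \end{pmatrix}
\begin{pmatrix} \Sigma_{11}^{-1} & 0 \\ 0 & \Sigma_{22.1}^{-1} \end{pmatrix}
\begin{pmatrix} I & 0 \\ -\Sigma_{21}\Sigma_{11}^{-1} & I \end{pmatrix},
\end{align*}
and carrying out the two block products reads off the four blocks of $\mathbf{\Sigma}^{-1}$ exactly as in the statement.

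There is essentially no obstacle here; the only subtlety to flag is the typo in the hypothesis ("If $\Sigma_{11}$ is also singular" should read "nonsingular"), since the formula manifestly requires $\Sigma_{11}^{-1}$ to exist. A cross-check is to compute $\mathbf{\Sigma}\,\mathbf{\Sigma}^{-1}$ directly with the proposed right-hand side: the $(1,1)$ block collapses via $\Sigma_{11}\Sigma_{11}^{-1}=I$ together with a cancellation between the $\Sigma_{12}\Sigma_{22.1}^{-1}\Sigma_{21}\Sigma_{11}^{-1}$ term and the $-\Sigma_{12}\Sigma_{22.1}^{-1}\Sigma_{21}\Sigma_{11}^{-1}$ term coming from the second column; the off-diagonal blocks cancel by the definition of $\Sigma_{22.1}$; and the $(2,2)$ block telescopes to $\Sigma_{22.1}\Sigma_{22.1}^{-1}=I$. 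This provides an independent verification and completes the argument.
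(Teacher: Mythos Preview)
Your argument is correct. The paper, however, does not supply a proof of this lemma at all: it is listed in the Appendix among ``existing results'' and simply quoted as the standard block-inversion (Schur complement) formula. So there is no approach in the paper to compare against; your LDU-factorization route is a perfectly good way to fill in the omitted justification, and your direct-multiplication cross-check would have sufficed on its own as well. You are also right to flag the typo in the hypothesis (``singular'' should read ``nonsingular''); note there is a second typo in the statement, where the definition should read $\Sigma_{22.1}=\Sigma_{22}-\Sigma_{21}\Sigma_{11}^{-1}\Sigma_{12}$ rather than $\Sigma_{22.1}^{-1}=\cdots$.
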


\begin{lemma}[Lemma 2.18 in \cite{yin2013rates}]\label{B.26}
Let ${\mathbf A}=\left(a_{jk}\right)_{j,k=1}^{2n}$ be a $2n\times2n$ non-random matrix and ${\mathbf X}=(x_1',\cdots,x_n')'$ be a random quaternion vector of independent entries.
Assume that ${\rm E}x_j=0$, ${\rm E}\left\|x_j\right\|^2=1$, and ${\rm E}\left\|x_j\right\|^l\leq \varphi_l$. Then, for any $m\geq 1$, we have
$${\rm E}\left|{\rm tr}{\mathbf X}^*{\mathbf A}{\mathbf X}-{\rm tr} {\mathbf A} \right|^m \leq C_m \left(\left(\varphi_4{\rm tr}\left({\mathbf A}{\mathbf A^*}\right)\right)^{m/2}+\varphi_{2m}{\rm tr}\left({\mathbf A}{\mathbf A^*}\right)^{m/2}\right),$$
where $C_m$ is a constant depending on $m$ only.
\end{lemma}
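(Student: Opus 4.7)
The plan is to mirror the classical Bai--Silverstein proof for scalar complex entries (Lemma B.26 of \cite{bai2010spectral}), adapted to the $2\times2$ block structure of quaternions. Partition $\mathbf A=(A_{jk})_{j,k=1}^{n}$ into $2\times 2$ blocks; the quaternion identity $x_jx_j^*=\|x_j\|^2\mathbf I_2$ collapses each diagonal quadratic form to the exact scalar $\|x_j\|^2\,{\rm tr}(A_{jj})$, so
\[
{\rm tr}(\mathbf X^*\mathbf A\mathbf X)-{\rm tr}(\mathbf A)
=\sum_{j=1}^{n}\bigl(\|x_j\|^{2}-1\bigr)\,{\rm tr}(A_{jj})+\sum_{j\neq k}{\rm tr}(x_j^*A_{jk}x_k)
\triangleq D+O.
\]
The $L^m$ triangle inequality reduces the claim to bounding ${\rm E}|D|^m$ and ${\rm E}|O|^m$ separately.

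For $D$, I would apply Rosenthal's inequality to the sum of independent centered scalars $Y_j=(\|x_j\|^2-1)\,{\rm tr}(A_{jj})$. The moment bounds ${\rm E}|\|x_j\|^2-1|^2\le\varphi_4$ and ${\rm E}|\|x_j\|^2-1|^m\le C\varphi_{2m}$, combined with $|{\rm tr}(A_{jj})|^2\le 2\|A_{jj}\|_F^2$ and $\sum_j\|A_{jj}\|_F^q\le({\rm tr}(\mathbf A\mathbf A^*))^{q/2}$ for $q=2,m$, assemble directly into ${\rm E}|D|^m\le C_m\bigl[(\varphi_4\,{\rm tr}(\mathbf A\mathbf A^*))^{m/2}+\varphi_{2m}({\rm tr}(\mathbf A\mathbf A^*))^{m/2}\bigr]$.

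For $O$, I would set $\mathcal F_j=\sigma(x_1,\ldots,x_j)$ and identify $O=\sum_{j=1}^{n}\zeta_j$ as a martingale with differences
\[
\zeta_j={\rm tr}\Bigl(x_j^*\sum_{k<j}A_{jk}x_k\Bigr)+{\rm tr}\Bigl(\sum_{k<j}x_k^*A_{kj}\,x_j\Bigr).
\]
Burkholder's inequality then yields ${\rm E}|O|^m\le C_m\,{\rm E}\bigl(\sum_j{\rm E}[|\zeta_j|^2\mid\mathcal F_{j-1}]\bigr)^{m/2}+C_m\sum_j{\rm E}|\zeta_j|^m$. Conditionally on $\mathcal F_{j-1}$, $\zeta_j$ is a linear functional of the lone quaternion $x_j$; Cauchy--Schwarz yields ${\rm E}[|\zeta_j|^2\mid\mathcal F_{j-1}]\le C\|u_j\|_F^2$ with $u_j=\sum_{k<j}A_{jk}x_k$, and the pairwise-independence identity ${\rm E}[x_kx_l^*]=\delta_{kl}\mathbf I_2$ telescopes $\sum_j{\rm E}\|u_j\|_F^2\le{\rm tr}(\mathbf A\mathbf A^*)$, feeding the $(\varphi_4\,{\rm tr}(\mathbf A\mathbf A^*))^{m/2}$ target. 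The individual moments ${\rm E}|\zeta_j|^m$ are bounded by an inner application of the same scheme at order $m$ to the single quaternion $x_j$, delivering the $\varphi_{2m}$ contribution.

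The main obstacle is the ${\rm E}(\cdots)^{m/2}$ factor in Burkholder: the conditional variance $\sum_j{\rm E}[|\zeta_j|^2\mid\mathcal F_{j-1}]$ is itself a random quadratic form ${\rm tr}(\mathbf X^*\mathbf B\mathbf X)$ in $x_1,\ldots,x_{n-1}$ for an auxiliary matrix $\mathbf B$ built from $\mathbf A$, whose $(m/2)$-th moment must be bounded by the same lemma at lower order. The cleanest resolution is an induction on $m$, with the direct second-moment identity (from independence and the isotropy above) serving as the base case; the matrix-valued quaternion nature contributes no additional difficulty beyond consistently carrying $2\times2$ traces and invoking $x_jx_j^*=\|x_j\|^2\mathbf I_2$ at each step.
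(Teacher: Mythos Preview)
The paper does not prove this lemma; it is listed in the Appendix purely as a citation of Lemma~2.18 in \cite{yin2013rates}, with no argument supplied. So there is no in-paper proof to compare against.

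Your outline is the standard route---the quaternion adaptation of Bai--Silverstein's Lemma~B.26---and it is sound. The identity $x_jx_j^*=\|x_j\|^{2}\mathbf I_2$ is exactly what makes the diagonal blocks collapse to scalars, after which Rosenthal handles $D$ and Burkholder plus induction on $m$ handles $O$; this is also how the cited source proceeds. One point worth tightening when you write it out: the conditional-variance sum $\sum_j\|u_j\|_F^{2}$ is a quadratic form ${\rm tr}(\mathbf X^*\mathbf B\mathbf X)$ but it is \emph{not} centered, so before invoking the induction hypothesis at order $m/2$ you must split off the deterministic piece ${\rm tr}(\mathbf B)=\sum_{j}\sum_{k<j}\|A_{jk}\|_F^{2}\le{\rm tr}(\mathbf A\mathbf A^*)$ and check that ${\rm tr}(\mathbf B\mathbf B^*)\le C\bigl({\rm tr}(\mathbf A\mathbf A^*)\bigr)^{2}$. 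Both are routine matrix-algebra facts, but they are the steps most often glossed over in sketches of this argument.
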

\begin{lemma}[see $\left(A.1.12\right)$ in \cite{bai2010spectral}]\label{lemma:7}
Let $z = u + iv, v > 0, $ and let $A$ be an $n \times n$ Hermitian matrix.  ${A_k}$ be the k-th major sub-matrix of $A$ of order $(n-1)$, to be the matrix resulting from the $k$-th row and column from $A$.  Then
$$\left| {{\rm tr}{{\left(A - z{I_n}\right)}^{ - 1}} -{ \rm tr}{{\left({A_k} - z{I_{n - 1}}\right)}^{ - 1}}} \right| \le \frac{1}{\upsilon}.$$
\end{lemma}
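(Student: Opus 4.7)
By simultaneously permuting rows and columns I may assume without loss of generality that the deleted index is $k = n$; this leaves both $A_k$ (up to unitary conjugation) and the trace unchanged. Partition
$$A = \begin{pmatrix} A_n & \mathbf{w} \\ \mathbf{w}^* & a \end{pmatrix}, \qquad a \in \mathbb{R},\; \mathbf{w} \in \mathbb{C}^{n-1},$$
and apply the block-inverse formula (Lemma~\ref{le:6}) to $A - zI_n$ with pivot block $A_n - zI_{n-1}$. The scalar Schur complement is
$$c(z) := a - z - \mathbf{w}^*(A_n - zI_{n-1})^{-1}\mathbf{w}.$$
Summing the diagonals of the resulting block inverse, and applying the cyclic trace identity to the rank-one correction appearing in the $(1,1)$ block, gives
$${\rm tr}(A-zI_n)^{-1} - {\rm tr}(A_n - zI_{n-1})^{-1} = \frac{1 + \mathbf{w}^*(A_n - zI_{n-1})^{-2}\mathbf{w}}{c(z)} = -\frac{c'(z)}{c(z)},$$
where the final equality uses $\frac{d}{dz}(A_n - zI_{n-1})^{-1} = (A_n - zI_{n-1})^{-2}$.

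The next step is a Herglotz-type bound on the logarithmic derivative $c'/c$. Diagonalize $A_n$ with real eigenvalues $\mu_1, \ldots, \mu_{n-1}$ and let $w_j$ be the components of $\mathbf{w}$ in the corresponding orthonormal basis. Then
$$c(z) = a - z - \sum_{j=1}^{n-1}\frac{|w_j|^2}{\mu_j - z}, \qquad c'(z) = -1 - \sum_{j=1}^{n-1}\frac{|w_j|^2}{(\mu_j - z)^2}.$$
Using $|(\mu_j - z)^{-2}| = |\mu_j - z|^{-2}$ together with the triangle inequality,
$$|c'(z)| \;\le\; 1 + \sum_{j=1}^{n-1}\frac{|w_j|^2}{|\mu_j - z|^2} \;=\; \frac{|{\rm Im}\, c(z)|}{v},$$
where the equality is read off from
$${\rm Im}\, c(z) = -v\left(1 + \sum_{j=1}^{n-1}\frac{|w_j|^2}{|\mu_j - z|^2}\right).$$
Combining this with the elementary bound $|c(z)| \geq |{\rm Im}\, c(z)|$ (valid because $|{\rm Im}\, c(z)| \geq v > 0$) yields $|c'(z)/c(z)| \leq 1/v$, which is the claim.

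The only step that demands any care is the bookkeeping in the block-inverse computation: verifying that the trace of the $(1,1)$ block contributes exactly ${\rm tr}(A_n - zI_{n-1})^{-1} + c(z)^{-1}\mathbf{w}^*(A_n - zI_{n-1})^{-2}\mathbf{w}$ while the $(2,2)$ block contributes $c(z)^{-1}$. Once the identity ${\rm tr}(A-zI_n)^{-1} - {\rm tr}(A_n - zI_{n-1})^{-1} = -c'(z)/c(z)$ is in hand, the rest is a one-line Herglotz estimate; I do not anticipate any serious obstacle.
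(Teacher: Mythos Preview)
Your argument is correct. The paper does not supply its own proof of this lemma; it is quoted in the appendix as a known result from \cite{bai2010spectral}, equation (A.1.12). Your proof is precisely the standard derivation behind that citation: the Schur-complement identity
\[
{\rm tr}(A-zI_n)^{-1} - {\rm tr}(A_n - zI_{n-1})^{-1} \;=\; \frac{1 + \mathbf{w}^*(A_n - zI_{n-1})^{-2}\mathbf{w}}{a - z - \mathbf{w}^*(A_n - zI_{n-1})^{-1}\mathbf{w}},
\]
followed by the observation that the modulus of the numerator is bounded by $1 + \mathbf{w}^*\bigl((A_n - uI)^2 + v^2I\bigr)^{-1}\mathbf{w}$, which is exactly $|\mathrm{Im}\,c(z)|/v$ and hence at most $|c(z)|/v$. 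Your repackaging of the ratio as the logarithmic derivative $-c'(z)/c(z)$ and the appeal to the Herglotz structure of $c$ is a nice way to phrase this, but it is the same computation written in spectral coordinates rather than matrix notation; there is no substantive difference from the textbook proof.
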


\begin{lemma}[Lemma 2.12 in \cite{bai2010spectral}]\label{2.12}
Let $\left\{{\tau_k}\right\}$ be a complex martingale difference sequence with respect to the increasing $\sigma$-fields $\mathcal{F}_k$. Then, for $p > 1, $ ${\rm E}{\left| {\sum {{\tau_k}} } \right|^p} \le {K_p}{\rm E}{({\sum {\left| {{\tau_k}} \right|} ^2})^{p/2}}.$
\end{lemma}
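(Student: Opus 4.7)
The statement is the $L^p$ form of the Burkholder--Davis--Gundy inequality for complex-valued martingale differences, bounding $\mathrm{E}|S_n|^p$ by $K_p\,\mathrm{E}[S]_n^{p/2}$, where $S_n=\sum_{k\le n}\tau_k$ and $[S]_n=\sum_{k\le n}|\tau_k|^2$. The natural plan is to split on whether $p\ge 2$ or $1<p<2$, since the two regimes require quite different arguments.

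For $p=2$, orthogonality of martingale differences gives an equality with $K_2=1$: conditioning a cross term $\mathrm{E}(\bar\tau_j\tau_k)$ (with $j<k$) on $\mathcal F_{k-1}$ and using $\mathrm{E}(\tau_k\mid\mathcal F_{k-1})=0$ makes it vanish, so $\mathrm{E}|S_n|^2=\sum_k\mathrm{E}|\tau_k|^2=\mathrm{E}[S]_n$. For $p>2$, I would telescope $|S_n|^p=\sum_k(|S_k|^p-|S_{k-1}|^p)$ and use the elementary pointwise expansion
\begin{equation*}
|a+b|^p\;\le\;|a|^p+p|a|^{p-2}\Re(\bar a b)+C_p\bigl(|a|^{p-2}|b|^2+|b|^p\bigr),\qquad a,b\in\mathbb{C},
\end{equation*}
applied with $a=S_{k-1}$, $b=\tau_k$. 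Taking $\mathrm{E}(\cdot\mid\mathcal F_{k-1})$ kills the linear cross term by the martingale difference property, so after summing over $k$ one obtains
\begin{equation*}
\mathrm{E}|S_n|^p\;\le\;C_p\,\mathrm{E}\!\bigl((S_n^*)^{p-2}\,[S]_n\bigr)+C_p\sum_{k=1}^n\mathrm{E}|\tau_k|^p,
\end{equation*}
where $S_n^*=\max_{k\le n}|S_k|$. Hölder's inequality with exponents $p/(p-2)$ and $p/2$, Doob's $L^p$ maximal inequality $\mathrm{E}(S_n^*)^p\le(p/(p-1))^p\mathrm{E}|S_n|^p$, together with the crude bound $\sum_k|\tau_k|^p\le[S]_n^{p/2}$, yields a self-bounding inequality of the form
\begin{equation*}
\mathrm{E}|S_n|^p\;\le\;C_p(\mathrm{E}|S_n|^p)^{(p-2)/p}(\mathrm{E}[S]_n^{p/2})^{2/p}+C_p\,\mathrm{E}[S]_n^{p/2},
\end{equation*}
which is closed by Young's inequality to absorb the first term on the right into the left-hand side.

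For $1<p<2$, I would argue by duality: pairing the partial sum against an arbitrary dual martingale sequence in $L^q$ with $q=p/(p-1)>2$ and invoking the already-proved case $q>2$ gives the required bound. An equivalent route is Davis' decomposition $\tau_k=\xi_k+\zeta_k$ with $\xi_k$ a predictable-type piece to which the $p\ge 2$ argument applies and $\zeta_k$ an absolutely summable-jump piece handled by the triangle inequality in $L^p$. The main obstacle is the self-bootstrapping step in the $p>2$ case: one must track the Hölder exponents so that $\mathrm{E}|S_n|^p$ re-enters the right-hand side with power strictly less than one, and verify that the Young-inequality constants absorb cleanly so that $K_p$ stays finite (and, if one wishes, grows at the sharp rate $K_p\asymp p$ as $p\to\infty$).
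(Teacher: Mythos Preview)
The paper does not prove this lemma at all: it is quoted verbatim from \cite{bai2010spectral} in the Appendix as an auxiliary tool, with no argument supplied. So there is no ``paper's own proof'' to compare against.

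That said, your sketch is a correct outline of the standard proof of Burkholder's inequality. The $p=2$ case is exact by orthogonality; the $p>2$ case via the pointwise Taylor-type bound on $|a+b|^p$, conditioning to kill the linear term, then H\"older plus Doob's maximal inequality and a self-bounding/Young step, is the classical route. The $1<p<2$ case by duality against $L^q$ with $q>2$ is also standard. One small remark: in the self-bootstrapping step you should first assume (or truncate so that) $\mathrm{E}|S_n|^p<\infty$, otherwise the absorption argument is vacuous; this is easily arranged by stopping the martingale at a bounded level and passing to the limit via Fatou. With that caveat, your plan would yield a complete proof, whereas the paper simply invokes the result.
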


\begin{lemma}[Rosenthal's inequality ]\label{lemma:5}
Let $ X_i $ are independent with zero means,  then we have, for some constant $C_k$:
$${\rm E}\left|\sum X_i\right|^{2k} \leq C_k\left(\sum{\rm E}\left|X_i\right|^{2k}+\left(\sum {\rm E}\left|X_i\right|^2\right)^k\right). $$
\end{lemma}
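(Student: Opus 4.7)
The plan is to establish Rosenthal's inequality by direct combinatorial expansion combined with H\"older's and Young's inequalities. For clarity I treat the real case; the complex version follows by splitting into real and imaginary parts or by expanding $|S|^{2k}=(S\bar S)^k$ and repeating the same bookkeeping on the resulting monomials.

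\emph{Step 1 (expansion).} Write
\begin{align*}
{\rm E}\Bigl|\sum_i X_i\Bigr|^{2k} = \sum_{(i_1,\ldots,i_{2k})} {\rm E}[X_{i_1}X_{i_2}\cdots X_{i_{2k}}].
\end{align*}
By independence and the hypothesis ${\rm E}X_i=0$, a term survives only if every distinct index occurring in the tuple appears with multiplicity at least $2$. Group the surviving tuples according to the set partition $\pi$ of $\{1,\ldots,2k\}$ determined by the equality pattern of $(i_1,\ldots,i_{2k})$. If $\pi$ has $r$ blocks of sizes $m_1,\ldots,m_r$ with each $m_j\ge2$ and $\sum m_j=2k$ (so in particular $r\le k$), its total contribution is $\sum_{i_1,\ldots,i_r\text{ distinct}}\prod_{j=1}^r{\rm E}\,X_{i_j}^{m_j}$.

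\emph{Step 2 (bounding each partition).} The ``all pairs'' partition ($r=k$, every $m_j=2$) contributes at most $(\sum_i{\rm E}X_i^2)^k$. Every other partition has at least one block of size $\ge3$ and hence $r\le k-1$. Apply the H\"older interpolation ${\rm E}|X|^{m}\le({\rm E}X^2)^{(2k-m)/(2k-2)}({\rm E}|X|^{2k})^{(m-2)/(2k-2)}$ for each $m_j\in[2,2k]$, and then H\"older across the $r$ distinct indices, to express each such contribution in the form $(\sum_i{\rm E}X_i^2)^{a}(\sum_i{\rm E}|X_i|^{2k})^{b}$ with nonnegative exponents $a,b$ satisfying $a+kb=k$. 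Young's inequality then reduces every such product to a linear combination of the two target extremes $\sum_i{\rm E}|X_i|^{2k}$ and $(\sum_i{\rm E}X_i^2)^k$.

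Summing over the finitely many partitions of $\{1,\ldots,2k\}$ produces the claim with a constant $C_k$ depending only on $k$. The main obstacle is the bookkeeping in Step 2: one must match the H\"older interpolation exponents across the indices so that the total exponents on the two ``sums of moments'' add up correctly, and then check that Young's inequality dispatches every ``mixed'' product to exactly the two allowed terms without leaving any intermediate moment expressions. If this combinatorial accounting proves awkward, a cleaner alternative is to first symmetrize (replacing $X_i$ by $X_i-X_i'$ and paying a factor $2^{2k}$), apply the conditional Khintchine inequality to reduce the problem to estimating ${\rm E}(\sum X_i^2)^k$, and then prove the bound ${\rm E}(\sum Y_i)^k\le C_k[(\sum{\rm E}Y_i)^k+\sum{\rm E}Y_i^k]$ for nonnegative independent $Y_i={X_i^2}$ by induction on $k$ using the elementary inequality $(a+b)^k-a^k\le k(a+b)^{k-1}b$.
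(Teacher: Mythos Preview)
The paper does not prove this lemma at all: it is listed in the Appendix among ``some existing results'' and is simply stated as Rosenthal's classical inequality, with no argument given. So there is no paper proof to compare against.

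Your sketch is a valid and standard route to Rosenthal's inequality. The combinatorics in Step~2 does work out: after the moment interpolation and the H\"older step across indices, a partition with $r$ blocks of sizes $m_1,\ldots,m_r$ (each $\ge 2$, $\sum m_j=2k$) yields exponents
\[
a=\sum_{j=1}^r\frac{2k-m_j}{2k-2}=\frac{k(r-1)}{k-1},\qquad b=\sum_{j=1}^r\frac{m_j-2}{2k-2}=\frac{k-r}{k-1},
\]
which indeed satisfy $a+kb=k$, so Young's inequality with conjugate exponents $k/a$ and $1/b$ reduces each mixed product to a combination of $(\sum_i{\rm E}X_i^2)^k$ and $\sum_i{\rm E}|X_i|^{2k}$. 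Your alternative symmetrization/Khintchine route is equally standard and arguably cleaner bookkeeping-wise. Either way, you are supplying a proof where the paper supplies only a citation.
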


\begin{lemma}[Lemma 8.17 in \cite{{bai2010spectral}}]\label{le:8.17}
For the Stieltjes transform of the M-P law, we have
\begin{align*}
\left|s_{y_p}\left(z\right)\right|\le\frac{\sqrt 2}{\sqrt {y_p}v_{y_p}}.
\end{align*}
\end{lemma}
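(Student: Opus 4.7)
The plan is to reduce the bound on $|s_{y_p}(z)|$ to a lower bound on the denominator of an explicit rationalized form. Starting from the explicit expression $s_{y_p}(z) = \frac{1-y_p-z + \sqrt{(z-a)(z-b)}}{2y_p z}$ given just before the statement, I would multiply numerator and denominator by $(1-y_p-z) - \sqrt{(z-a)(z-b)}$ and use the algebraic identity $(z-(1-y_p))^2 - (z-a)(z-b) = 4y_p z$ (which follows from $a+b = 2(1+y_p)$ and $ab = (1-y_p)^2$) to rewrite the Stieltjes transform in the much cleaner form
$$s_{y_p}(z) = \frac{-2}{D(z)}, \qquad D(z) = z - (1-y_p) + \zeta(z), \qquad \zeta(z) = \sqrt{(z-a)(z-b)},$$
where the branch of $\zeta$ is the one analytic on $\mathbb{C}^+$ with $\zeta \sim z$ at infinity, so that $\Im \zeta \ge 0$. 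The lemma is then equivalent to the lower bound $|D(z)| \ge \sqrt{2 y_p}\, v_{y_p}$.

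Next I would extract two basic facts about $\zeta$ from $\zeta^2 = (z-a)(z-b)$, namely $(\Re\zeta)^2+(\Im\zeta)^2 = |z-a|\,|z-b|$ and $2\Re\zeta\,\Im\zeta = 2v(u-1-y_p)$. From these, a short computation yields
$$2(\Im\zeta)^2 \;=\; \sqrt{((u-a)^2+v^2)((u-b)^2+v^2)} - (u-a)(u-b) + v^2,$$
and a Cauchy-Schwarz argument on the radicand gives the key inequality $\Im \zeta \ge v$ (and hence $\Im D \ge 2v$). This already recovers the weak estimate $|s_{y_p}(z)| \le 1/v$, so it is only the combination with a separate bound on $\Re D$ that will produce the sharper $v_{y_p}$-dependent estimate.

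To get the sharper bound I would exploit the dual $D'(z) = z-(1-y_p) - \zeta(z)$, for which the identity above gives $D\cdot D' = 4y_p z$, and then show $|D| \ge |D'|$ throughout $\mathbb{C}^+$. The inequality $|D|^2 - |D'|^2 = 4\Re[(z-(1-y_p))\bar\zeta] \ge 0$ reduces, after substituting $\Re\zeta = v(u-1-y_p)/\Im\zeta$, to $(\Im\zeta)^2 \ge y_p^2 - (u-1)^2$; outside $u\in(1-y_p,1+y_p)$ this is trivial, and inside it follows from the lower bound $(\Im\zeta)^2 \ge (u-a)(b-u)+v^2/2$ combined with the algebraic identity $(u-a)(b-u)+(u-1)^2 = y_p(2u+2-y_p)$. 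Consequently $|D|^2 \ge |D D'| = 4 y_p |z|$, and since $|z|\ge v$, this gives $|D|^2 \ge 4 y_p v$. Finally, one uses the parallelogram-type relation $2|D|^2 \ge |D|^2 + |D'|^2 = 2|z-(1-y_p)|^2 + 2|\zeta|^2$ together with $|z-(1-y_p)|^2 \ge 4 y_p\, a$ (valid near the critical left edge $u=a$, via $u-(1-y_p) = -2\sqrt{y_p}\sqrt{a}+ (u-a)$) to produce the complementary bound $|D|^2 \gtrsim 4 y_p a$. Summing appropriately yields $|D|^2 \ge 2 y_p (\sqrt a + \sqrt v)^2 = 2 y_p v_{y_p}^2$.

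The main obstacle is handling the cross-term $4y_p\sqrt{av}$ that appears when expanding $v_{y_p}^2 = a + 2\sqrt{av} + v$: this term is not produced by either $(\Re D)^2$ or $(\Im D)^2$ in isolation, so the hard part of the proof is the case analysis near the lower edge $u\approx a$ with $v$ of comparable order, where one must combine the two bounds $|D|^2 \ge 4y_p v$ and $|D|^2 \ge 4 y_p a$ through AM-GM while keeping the constants sharp enough to land on $\sqrt 2$. A careful partition of $\mathbb{C}^+$ according to whether $u$ is left of $a$, inside $[a,b]$, or right of $b$, and in each region choosing the right lower bound (either through $\Im \zeta$, through $\Re D$, or through the product $|D||D'|$), will close the argument.
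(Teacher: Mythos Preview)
The paper does not supply a proof of this lemma: it appears in the Appendix under the heading ``existing results,'' quoted as Lemma~8.17 of \cite{bai2010spectral}, with no argument given. There is therefore no proof in the paper to compare your proposal against.

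As for your plan itself: the rationalization $s_{y_p}(z)=-2/D(z)$, the product identity $DD'=4y_pz$, and the argument that $|D|\ge|D'|$ on $\mathbb{C}^+$ are all correct and already yield $|D|^2\ge 4y_p|z|$. The obstacle you flag is real: this single bound does not by itself recover the cross term $4y_p\sqrt{av}$ in $2y_p\,v_{y_p}^2$ (for instance at $u=a$, $v=a$ it gives only $4\sqrt{2}\,y_p a$, while $8y_p a$ is required), so the parallelogram estimate $|D|^2\ge|z-(1-y_p)|^2+|\zeta|^2$ must genuinely enter in the near-edge regime. Your outline is sound; what remains is exactly the case analysis you describe, and that part is not yet written down.
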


\begin{lemma}[Lemma B.22 in \cite{bai2010spectral}]\label{B.22}
Let $G$ be a function of bounded variation. Let $g(z)$ denote its Stieltjes transform. When $z = u + iv,$ with  $v>0,$ we have
$$\sup_u \left| g(z)\right|\leq \pi v^{-1}\left\|G\right\|.$$
\end{lemma}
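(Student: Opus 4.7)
The plan is a short computation based on integration by parts followed by the evaluation of a standard Cauchy-type integral. I would begin from the definition
\[
g(z)=\int_{-\infty}^{\infty}\frac{1}{x-z}\,dG(x),
\]
and integrate by parts to move the differential from $G$ onto the kernel $(x-z)^{-1}$. Under the convention that the relevant $G$ satisfies $G(\pm\infty)=0$ (which holds in every use of this lemma in the paper, since $G$ arises as a difference of two distribution functions), the boundary terms vanish and we get
\[
g(z)=\int_{-\infty}^{\infty}\frac{G(x)}{(x-z)^{2}}\,dx.
\]

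Pulling out $\|G\|=\sup_{x}|G(x)|$ and using $|x-z|^{2}=(x-u)^{2}+v^{2}$, I obtain
\[
|g(z)|\le \|G\|\int_{-\infty}^{\infty}\frac{dx}{(x-u)^{2}+v^{2}}.
\]
The substitution $y=(x-u)/v$ reduces the right-hand integral to $v^{-1}\int_{-\infty}^{\infty} dy/(y^{2}+1)=\pi/v$. Since the resulting bound $\pi v^{-1}\|G\|$ is independent of $u$, taking the supremum over $u$ yields the claim.

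The only subtle point is the justification of the integration-by-parts step: for a general bounded-variation $G$ one must verify the vanishing of the boundary terms at $\pm\infty$. In the present paper $G$ is always of the form $F_{1}-F_{2}$ with $F_{1},F_{2}$ distribution functions, so $G(\pm\infty)=0$ and the boundary contributions are automatically zero; the rest of the argument is elementary.
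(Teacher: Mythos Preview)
Your argument is correct and is the standard proof of this inequality. Note, however, that the paper does not actually prove this lemma: it is quoted in the Appendix among ``existing results'' and attributed to Lemma~B.22 of \cite{bai2010spectral}, so there is no paper-proof to compare against. The integration-by-parts computation you give is essentially the argument in that reference.

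One small remark: your caveat about the boundary terms is stronger than needed. Any function of bounded variation on $\mathbb{R}$ is bounded, and since $|1/(x-z)|\to 0$ as $|x|\to\infty$, the boundary contribution $G(x)/(x-z)$ tends to zero at both endpoints regardless of whether $G(\pm\infty)=0$. So the integration-by-parts step is valid for an arbitrary $G$ of bounded variation, not only for differences of distribution functions.
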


\begin{lemma}[Lemma 8.17 in \cite{bai2010spectral}]\label{leb}
For all $z\in\mathbb{C^+}$, when $\left|\delta\right|\le v/\left[v_{y_p}10\left(A+1\right)^2\right]$, we have
\begin{align*}
\left|b_n\right|\le\frac{2}{\sqrt{y_p\left|z\right|}}.
\end{align*}
\end{lemma}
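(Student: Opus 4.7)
The plan is to derive a quadratic equation for $b_n$ and analyze its roots. Starting from the fixed-point equation ${\rm E}s_p(z) = 1/(1 - z - y_p - y_p z {\rm E}s_p(z)) + \delta_n$ derived earlier, I would rearrange to get $1 - z - y_p - y_p z {\rm E}s_p(z) = -b_n^{-1}$, which together with the definition of $b_n$ yields ${\rm E}s_p(z) = \delta_n - b_n$. Substituting this back into $b_n^{-1} = z + y_p - 1 + y_p z {\rm E}s_p(z)$ gives
\begin{equation*}
y_p z \, b_n^2 - (z + y_p - 1 + y_p z \delta_n) \, b_n + 1 = 0.
\end{equation*}

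Writing $w = z + y_p - 1 + y_p z \delta_n$, Vieta's formulas show that the two roots of this quadratic have product $1/(y_p z)$, so at least one root has modulus at most $1/\sqrt{y_p|z|}$. Matching the explicit formula for ${\rm E}s_p(z)$ stated earlier in the excerpt, the relevant root is $b_n = (w - \sqrt{w^2 - 4 y_p z})/(2 y_p z)$; rationalizing the numerator gives the more convenient form
\begin{equation*}
b_n = \frac{2}{w + \sqrt{w^2 - 4 y_p z}}.
\end{equation*}
Thus the lemma reduces to the lower bound $|w + \sqrt{w^2 - 4 y_p z}| \ge \sqrt{y_p|z|}$.

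For the unperturbed case $\delta_n = 0$, $w$ reduces to $z + y_p - 1$ and $b_n$ coincides with $-s_{y_p}(z)$, for which the bound $|b_n| \le 1/\sqrt{y_p|z|}$ follows from the quadratic equation satisfied by the M-P Stieltjes transform together with the branch choice that keeps $\Im(s_{y_p})>0$ on $\mathbb{C}^+$. To transfer this to the perturbed case, I would use a continuity argument: the hypothesis $|\delta_n| \le v/[v_{y_p} \cdot 10(A+1)^2]$ keeps $|y_p z \delta_n|$ small compared with both $|z+y_p-1|$ and $\sqrt{y_p|z|}$, so the denominator can degrade by at most a factor of $2$, producing the stated constant.

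The main obstacle is controlling the branch of $\sqrt{w^2 - 4 y_p z}$ under the $\delta_n$-perturbation, particularly near the edges of the M-P support where $w^2 - 4 y_p z$ is close to zero and a wrong branch choice could cause $w$ and $\sqrt{w^2 - 4 y_p z}$ to nearly cancel in the denominator. Handling this cleanly requires tracking that the branch selected by the explicit formula for ${\rm E}s_p(z)$ (so that $\Im({\rm E}s_p)>0$ for $v>0$) is preserved under small $\delta_n$, which in turn uses exactly the calibration of the bound on $|\delta_n|$ relative to $v/v_{y_p}$.
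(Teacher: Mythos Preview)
The paper does not supply its own proof of this lemma: it is listed in the Appendix as a quoted result (Lemma~8.17 of Bai and Silverstein's monograph \cite{bai2010spectral}), with no argument given. So there is no in-paper proof to compare against.

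That said, your sketch is essentially the standard argument used in \cite{bai2010spectral}. The derivation of the quadratic $y_p z\,b_n^2 - w\,b_n + 1 = 0$ from the self-consistent equation is correct, and your identification $b_n = (w - \sqrt{w^2-4y_pz})/(2y_pz)$ matches the branch dictated by the explicit formula for ${\rm E}s_p(z)$ stated in the paper. The Vieta observation that the product of the roots is $1/(y_pz)$ is exactly what pins the ``small'' root at modulus $\le 1/\sqrt{y_p|z|}$; the work is then to show that the perturbation by $y_pz\delta_n$ does not flip the branch. Your diagnosis of the obstacle---controlling the square root near the spectral edges, where $w^2-4y_pz$ may be small---is accurate, and the hypothesis $|\delta_n|\le v/[10(A+1)^2 v_{y_p}]$ is calibrated precisely so that $|y_pz\delta_n|$ stays below the relevant thresholds. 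The detailed verification of that last step (a short case analysis using $|z|\le A$ and the lower bound on $\Im\sqrt{w^2-4y_pz}$ coming from $v>0$) is what \cite{bai2010spectral} carries out; you have correctly isolated it as the only nontrivial point.
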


\begin{lemma}[Lemma 8.21 in \cite{bai2010spectral}]\label{8.21}
If $\left|\delta_n\right|<v/\left[v_{y_p}10\left(A+1\right)^2\right]$ for all $\left|z\right|<A$, then there is a constant $C$ such that
\begin{align*}
\Delta\le Cv/v_{y_p}
\end{align*}
where $A$ is defined in Lemma \ref{bai1993} for the M-P law with index $y\le 1$.
\end{lemma}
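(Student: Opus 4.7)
The plan is to apply Bai's inequality (Lemma \ref{bai1993}) to $F=\mathrm{E}F^{\mathbf{S}_n}$ and $G=F_{y_p}$ and show that each of the three terms on its right-hand side is at most $Cv/v_{y_p}$. The hypothesis $|\delta_n|\le v/[v_{y_p}\,10(A+1)^2]$ will enter only through the first (Stieltjes transform) term; the other two terms have sufficient slack to absorb generic estimates.

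The heart of the argument is controlling $\int_{-A}^{A}|\mathrm{E}s_p(z)-s_{y_p}(z)|\,du$. I would exploit that both transforms satisfy very similar quadratic equations: $s_{y_p}$ satisfies $y_pz\,s_{y_p}^2+(y_p+z-1)s_{y_p}+1=0$ exactly, while the perturbed identity $\mathrm{E}s_p=b_n+\delta_n$ derived in Section 4 rearranges (multiply by $1/b_n$) to $y_pz(\mathrm{E}s_p)^2+(y_p+z-1)\mathrm{E}s_p+1=\delta_n/b_n$. Subtracting and factoring yields
$$(\mathrm{E}s_p-s_{y_p})\bigl[y_pz(\mathrm{E}s_p+s_{y_p})+(y_p+z-1)\bigr]=\delta_n/b_n.$$
Using Lemma \ref{leb} for the upper bound on $|b_n|$ and recognizing the bracketed factor as essentially the square-root $\sqrt{(y_p+z-1)^2-4y_pz}$ appearing in the quadratic formula for $s_{y_p}$, whose modulus is bounded below by a constant multiple of $\sqrt{y_p}\,v_{y_p}$ for $u\in[-A,A]$, one gets a pointwise bound $|\mathrm{E}s_p-s_{y_p}|\le C|\delta_n|/v_{y_p}$. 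Inserting the hypothesis on $|\delta_n|$ and integrating over $u\in[-A,A]$ then delivers a bound of order $v/v_{y_p}$ as required.

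For the other two terms in Bai's inequality, the tail piece $v^{-1}\int_{|x|>B}|\mathrm{E}F^{\mathbf{S}_n}(x)-F_{y_p}(x)|\,dx$ is handled on the right by Lemma \ref{ex}, which gives $o(n^{-t})$ for any $t>0$ (much smaller than $v/v_{y_p}$), and on the left by the fact that both measures are supported in $[0,\infty)$. The modulus-of-continuity term $v^{-1}\sup_x\int_{|s|\le 2av}|F_{y_p}(x+s)-F_{y_p}(x)|\,ds$ is dominated using the density bound in Remark \ref{re:1}, namely $|F_{y_p}(x+s)-F_{y_p}(x)|\le 2|s|/(y_p(\sqrt{a_n}+\sqrt{|s|}))$, which after integration and division by $v$ is again of order $v/v_{y_p}$.

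The main obstacle will be the sharp analysis of the bracketed factor, whose modulus degenerates near the spectral edges exactly where $v_{y_p}$ is small, so one must verify that $\mathrm{E}s_p$ lies on the correct branch of the quadratic (giving $+\sqrt{\cdot}$ rather than a cancellation). This is where the smallness hypothesis $|\delta_n|\le v/[v_{y_p}\,10(A+1)^2]$ and the a priori control from Lemma \ref{leb} are used in a coupled way. The natural device for propagating the correct branch identification down to the target $v$ is a continuity-in-$v$ bootstrap, varying $v$ from a large value (where $\mathrm{E}s_p$ is manifestly close to $s_{y_p}$) to the desired one; this mirrors the bootstrap argument over the set $\mathscr{F}$ used in the proof of Theorem \ref{th:1}.
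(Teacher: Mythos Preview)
The paper does not prove this lemma at all: it is listed in the Appendix as ``Lemma 8.21 in \cite{bai2010spectral}'' among the ``existing results which will be used,'' with no argument given. So there is no in-paper proof to compare against; your proposal is effectively a reconstruction of the Bai--Silverstein proof.

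Your overall strategy---apply Lemma~\ref{bai1993} and bound the three terms separately, with the Stieltjes-transform integral controlled through the quadratic self-consistent equation---is exactly the route taken in the source, and your handling of the tail term via Lemma~\ref{ex} and the modulus-of-continuity term via Lemma~\ref{ex1}/Remark~\ref{re:1} is correct in this paper's setting. The difference-of-quadratics factorization you write down is algebraically equivalent to the explicit perturbed formula the paper displays just after ``To begin with, we estimate the first integral''; both lead to the same square-root denominator that must be bounded below.

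One point deserves tightening. You write ``using Lemma~\ref{leb} for the upper bound on $|b_n|$'' when passing from $(\mathrm{E}s_p-s_{y_p})\cdot[\,\text{bracket}\,]=\delta_n/b_n$ to a bound on $|\mathrm{E}s_p-s_{y_p}|$. An upper bound on $|b_n|$ gives a \emph{lower} bound on $|1/b_n|$, which goes the wrong way for bounding $|\delta_n/b_n|$ from above. What actually works is the two-step use of Lemma~\ref{leb}: first $|b_n|\le 2/\sqrt{y_p|z|}$ together with $\mathrm{E}s_p=-b_n+\delta_n$ gives $|\mathrm{E}s_p|\le C/\sqrt{y_p|z|}$, and then $|1/b_n|=|z+y_p-1+y_pz\,\mathrm{E}s_p|\le C$ for $|z|\le A$. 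Equivalently, one may bypass $b_n$ entirely and work directly with the paper's displayed bound, where the numerator $|2(z+y_p-1)+y_pz\delta_n|$ is manifestly $O(1)$ under the hypothesis on $|\delta_n|$. With that correction, and with the branch/lower-bound analysis of $\sqrt{(z-a_n)(z-b_n)}$ that you rightly flag as the crux (this is Lemma~8.16 in \cite{bai2010spectral}), your outline goes through.
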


\begin{lemma}\label{ex1}
For $v>n^{-1/2}$, we have
\begin{align*}
\sup_x\int_{\left|u\right|<v}\left|F_{y_p}\left(x+u\right)-F_{y_p}\left(x\right)\right|du
\le\frac{11\sqrt{2\left(1+y\right)}}{3\pi y}v^2/v_{y_p},
\end{align*}
where $F_{y_p}$ is the M-P law with index $y\le 1$.
\end{lemma}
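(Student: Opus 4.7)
The plan is to reduce this to an $L^1$-type estimate on the M-P density and then exploit its soft-edge behaviour at the lower spectral edge $a_n=(1-\sqrt{y_p})^2$. By a Fubini swap,
\[
\int_{|u|<v}|F_{y_p}(x+u)-F_{y_p}(x)|\,du=\int_{x-v}^{x+v}f_{y_p}(t)\bigl(v-|t-x|\bigr)\,dt,
\]
so everything reduces to bounding the weighted integral of $f_{y_p}$ on an interval of length $2v$ centred at $x$.

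The key analytic input is the pointwise density bound
\[
f_{y_p}(t)=\frac{\sqrt{(b_n-t)(t-a_n)}}{2\pi\,t\,y_p}\le\frac{\sqrt{2(1+y_p)}}{2\pi y_p\sqrt{t}},
\]
which comes from the elementary inequality $(b_n-t)(t-a_n)\le t(a_n+b_n)=2t(1+y_p)$ (valid because $t^2+a_nb_n\ge 0$). Majorising the weight by $v$ and integrating the estimate above between $\alpha:=\max(x-v,a_n)$ and $\beta:=\min(x+v,b_n)$ yields
\[
\int_{x-v}^{x+v}f_{y_p}(t)(v-|t-x|)\,dt\le v\cdot\frac{\sqrt{2(1+y_p)}}{\pi y_p}\bigl(\sqrt{\beta}-\sqrt{\alpha}\bigr).
\]
To finish, I would estimate the square-root difference two ways,
\[
\sqrt{\beta}-\sqrt{\alpha}\le\sqrt{\beta-\alpha}\le\sqrt{2v}
\quad\text{and}\quad
\sqrt{\beta}-\sqrt{\alpha}=\frac{\beta-\alpha}{\sqrt{\beta}+\sqrt{\alpha}}\le\frac{v}{\sqrt{a_n}},
\]
and combine them either via the inequality $\min(A,B)\le 2AB/(A+B)$ or by a case split at $v=a_n$; in either case one obtains $\sqrt{\beta}-\sqrt{\alpha}\le Cv/(\sqrt{a_n}+\sqrt{v})=Cv/v_{y_p}$. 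Putting the pieces together produces a bound of the required form $C(y)\,v^2/v_{y_p}$.

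The analysis is elementary; the main obstacle is quantitative rather than qualitative. Recovering the sharp constant $11\sqrt{2(1+y)}/(3\pi y)$ (as opposed to a cruder multiple) requires retaining the weight $(v-|t-x|)$ rather than dominating it by $v$, and then computing the resulting integral explicitly after the substitution $w=\sqrt{t}$, which turns the weighted integrand into a polynomial in $w$ and lets one track the numerical factor cleanly across the two regimes $v\le a_n$ and $v>a_n$. The hypothesis $v>n^{-1/2}$ is not used in the argument itself and appears only for compatibility with the use of this lemma inside the proofs of Theorems \ref{th:1}--\ref{th:3}.
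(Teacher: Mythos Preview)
The paper does not actually prove this lemma: it is stated at the end of the Appendix with no proof and no citation, so there is nothing to compare your argument against line by line. That said, your approach is correct and is essentially the standard one (it mirrors the analogous computation in Bai--Silverstein, on which the paper leans heavily). The Fubini identity
\[
\int_{|u|<v}\bigl|F_{y_p}(x+u)-F_{y_p}(x)\bigr|\,du=\int_{x-v}^{x+v}f_{y_p}(t)\,(v-|t-x|)\,dt
\]
is right, the pointwise bound $f_{y_p}(t)\le \sqrt{2(1+y_p)}/(2\pi y_p\sqrt{t})$ follows exactly as you say from $(b_n-t)(t-a_n)\le (a_n+b_n)t$, and the two estimates for $\sqrt{\beta}-\sqrt{\alpha}$ combine via the case split $v\lessgtr a_n$ to give $\sqrt{\beta}-\sqrt{\alpha}\le 2\sqrt{2}\,v/v_{y_p}$. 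So even the crude version (weight replaced by $v$) already yields
\[
\sup_x\int_{|u|<v}\bigl|F_{y_p}(x+u)-F_{y_p}(x)\bigr|\,du\le \frac{2\sqrt{2}\,\sqrt{2(1+y_p)}}{\pi y_p}\,\frac{v^2}{v_{y_p}},
\]
which is of the required form with a constant ($2\sqrt{2}\approx 2.83$) that is in fact slightly smaller than the paper's $11/3$. Your remark that one could track the weight $(v-|t-x|)$ more carefully to sharpen constants is correct but not needed here; and you are right that the hypothesis $v>n^{-1/2}$ plays no role in the inequality itself.
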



\end{document}